\title{The limiting characteristic polynomial of classical random matrix ensembles}
\author{Reda   \textsc{Chhaibi}      \footnote{\texttt{reda.chhaibi@math.univ-toulouse.fr}},
        Emma   \textsc{Hovhannisyan} \footnote{\texttt{emma.hovhannisyan@math.uzh.ch}},
        Joseph \textsc{Najnudel}     \footnote{\texttt{joseph.najnudel@bristol.ac.uk}},\\
        Ashkan \textsc{Nikeghbali}   \footnote{\texttt{ashkan.nikeghbali@math.uzh.ch}},
        Brad   \textsc{Rodgers}      \footnote{\texttt{brad.rodgers@queensu.ca}}
       } 
\theoremstyle{plain}
\newtheorem{thm}{Theorem}[section]
\newtheorem{proposition}[thm]{Proposition}
\newtheorem{corollary}[thm]{Corollary}
\newtheorem{definition}[thm]{Definition}
\newtheorem{lemma}[thm]{Lemma}
\numberwithin{equation}{section}
\def\half{\frac{1}{2}}
\def\a{\alpha}
\def\b{\beta}
\def\N{{\mathbb N}}
\def\R{{\mathbb R}}
\def\C{{\mathbb C}}
\def\P{{\mathbb P}}
\def\E{{\mathbb E}}
\newcommand\Var{\mathrm{Var}}
\begin{document}

\maketitle

\begin{abstract}
We demonstrate the convergence of the characteristic polynomial of several random
matrix ensembles to a limiting universal function, at the microscopic scale. The random matrix ensembles we treat are classical compact groups and the Gaussian Unitary Ensemble.
	
In fact, the result is the by-product of a general limit theorem for the convergence of random entire functions whose zeros present a simple regularity property.
\end{abstract}

\indent
\hrule
\tableofcontents
\indent
\hrule

\section{Introduction}
\label{section:introduction}

A novel perspective in random matrix theory takes as its fundamental object a random function, such as the characteristic polynomial of a random matrix, and has as a goal demonstrating the convergence of these random functions  to a limiting object. This is in contrast to a more traditional focus on demonstrating the convergence of a point process such as the collection of eigenvalues to some random limit. Much of the information that has traditionally been of interest can be succinctly summarized within this new framework and furthermore new questions are brought to the surface by the change of focus. Papers making use of this perspective include \cite{AiWa,ChNaNi,So}.

In the recent paper \cite{ChNaNi} the first, third, and fourth authors of this paper introduced the following random entire function:
\begin{equation}
\label{limit_char}
\xi_\infty(s):= \lim_{B\rightarrow\infty} e^{i\pi s} \prod_{|y_i| \leq B} \Big(1-\frac{s}{y_i}\Big),
\end{equation}
where $y_i$ are the points of the determinantal point process with sine kernel, also called {\it Sine process}. Such a product, it was shown, will converge for all $s\in \mathbb{C}$ almost surely and gives a random entire function. We call it the \emph{limiting characteristic polynomial} for the following reason: if $U(n)$ is the group of $n\times n$ unitary matrices, endowed with Haar measure, and $g$ is a random element of $U(N)$, then for
$$
\xi_n(s) := \frac{\det(e^{i2\pi s/n}-g)}{\det(1-g)},
$$
it was shown in \cite{ChNaNi} that the random analytic function $\xi_n$ tends in distribution to $\xi_\infty$ in the topology of uniform convergence on compact sets. The proof in that paper proceeds from the machinery of virtual isometries, special to the unitary group developed in \cite{BNN}. Such a result is closely related to, though does not follow only from, the fact that the rescaled eigenangles of a random unitary matrix tend locally to a Sine process. 

In this paper we consider a generalization of this result for other point processes. We first prove a general result for a broad class of point processes, and then apply this result to several random matrix ensembles. A thorough introduction to point processes may be found in \cite{Kall}.

\begin{definition}
\label{def:amenable}
Let $x^{(1)}$, $x^{(2)}$, $x^{(3)}$,... all be point processes with points in $\mathbb{R}$, and denote the point instances of the point process $x^{(n)}$ by $\{x^{(n)}_i\}_{i\in\mathbb{Z}}$. Define the random variables
$$
X_I^{(n)}:= \#\{ x_i^{(n)} \in I\},
$$
for intervals $I$, which count the number of points of $x^{(n)}$ that lie in $I$. We say that the sequence of point processes $x^{(1)}, x^{(2)},...$ is \textbf{uniformly product-amenable} if, for all $n$ the sums
\begin{equation}
\label{converging_sums}
\lim_{B\rightarrow\infty} \sum_{|x_i^{(n)}| \leq B} \frac{1}{x_i^{(n)}}, \quad \textrm{and} \quad \sum \frac{1}{|x_i^{(n)}|^2}
\end{equation}
converge almost surely, and if for some constant $\delta > 0$, the following estimates are true uniformly in $n$ for all intervals $I$ with $|I| \geq 1$, 
\begin{enumerate}[(i)]
\item Symmetry in expectation: $\mathbb{E} X_I^{(n)} = \mathbb{E} X_{-I}^{(n)} + O(|I|^{1-\delta}).$
\item Regularity in expectation:  $\mathbb{E} X_I^{(n)} \lesssim |I|.$
\item Regularity in variance: $\Var\; X_I^{(n)} \lesssim |I|^{2-\delta}.$
\end{enumerate}
\end{definition}
Note the notation $f_n \lesssim g_n$ or $f_n = O\left(g_n \right)$ means that there exists $C>0$ such that $|f_n| \leq C g_n$ for all $n$.

Our first theorem is as follows:

\begin{thm}
\label{general_pp_theorem}
Assume that $x^{(1)}, x^{(2)}, ...$ are a sequence of uniformly product-amenable point processes. Define
$$
c_n(s) := \lim_{B\rightarrow\infty} e^{i\pi s} \prod_{|x_i^{(n)}| \leq B} \Big(1-\frac{s}{x_i^{(n)}}\Big),
$$
This product converges almost surely for all $n$ and all $s\in \mathbb{C}$. Moreover, if in law the point processes $x^{(n)}$ tend to the Sine process $y$, then in law
$$
c_n(s) \rightarrow \xi_\infty(s),
$$
in the topology of uniform convergence on compact sets.
\end{thm}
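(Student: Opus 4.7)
The plan is a truncation argument paired with the continuous mapping theorem, where the main effort goes into a tail estimate for the infinite product, uniform in $n$. Fix $s \in \mathbb{C}$ and $n$. For $|x_i^{(n)}| \geq 2|s|$ one has $\log(1 - s/x_i^{(n)}) = -s/x_i^{(n)} + O(|s|^2/|x_i^{(n)}|^2)$, so the logarithm of the partial product in the definition of $c_n$ splits into a conditionally convergent sum, controlled by the first expression in (\ref{converging_sums}), and an absolutely convergent sum, controlled by the second. These estimates are uniform for $|s|$ in a compact set once $B$ is large, so $c_n$ exists as an almost sure uniform-on-compacts limit of entire functions and is itself entire.

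For $B > 0$, set
$$
c_n^{(B)}(s) := e^{i\pi s} \prod_{|x_i^{(n)}| \leq B}\left(1 - \frac{s}{x_i^{(n)}}\right), \qquad \xi_\infty^{(B)}(s) := e^{i\pi s} \prod_{|y_i| \leq B}\left(1 - \frac{s}{y_i}\right).
$$
Each is a continuous functional of the point configuration restricted to $[-B,B]$ (off the probability-zero event of an atom at $\pm B$), so the assumed convergence $x^{(n)} \to y$ in law combined with the continuous mapping theorem gives $c_n^{(B)} \to \xi_\infty^{(B)}$ in law in the topology of uniform convergence on compact subsets of $\mathbb{C}$, for every fixed $B$.

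The core estimate is then to show that for any compact $K \subset \mathbb{C}$,
$$
\lim_{B \to \infty} \sup_n \mathbb{E}\sup_{s \in K}\left|\log \frac{c_n(s)}{c_n^{(B)}(s)}\right| = 0,
$$
and analogously with $\xi_\infty^{(B)}, \xi_\infty$ (for $B > \sup_{s \in K}|s|$, so the ratio is zero-free). Expanding the logarithm, the task splits into controlling $s\sum^{*}_{|x_i^{(n)}| > B} 1/x_i^{(n)}$ (principal value) and $O\bigl(|s|^2\sum_{|x_i^{(n)}| > B} 1/|x_i^{(n)}|^2\bigr)$. A dyadic partition using assumption (ii) bounds the expectation of the latter by $O(1/B)$. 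For the conditionally convergent part, I would apply Abel summation to rewrite it, up to boundary terms that vanish as the cutoff goes to infinity, as
$$
\int_B^\infty \frac{X_{(B,t]}^{(n)} - X_{[-t,-B)}^{(n)}}{t^2}\, dt.
$$
Assumption (i) gives $|\mathbb{E}(X_{(B,t]}^{(n)} - X_{[-t,-B)}^{(n)})| = O((t-B)^{1-\delta})$ and (iii) gives $\Var(X_{(B,t]}^{(n)} - X_{[-t,-B)}^{(n)}) \lesssim (t-B)^{2-\delta}$, so by $\mathbb{E}|Z| \leq |\mathbb{E}Z| + \sqrt{\Var Z}$ one obtains $\mathbb{E}|X_{(B,t]}^{(n)} - X_{[-t,-B)}^{(n)}| \lesssim (t-B)^{1-\delta/2}$, and a scaling $t = Bu$ yields the $O(B^{-\delta/2})$ bound, uniform in $n$.

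The hard part is exactly this tail estimate: the sum $\sum 1/x_i^{(n)}$ is only conditionally convergent, so absolute summability is unavailable, and cancellation must be extracted simultaneously from the symmetry hypothesis (i) and the variance hypothesis (iii) in order to obtain the quantitative decay in $B$ that beats the accumulation over the integral. Once this is in hand, a standard diagonal argument in $(n, B, \varepsilon)$ combines the $B$-truncated convergence with the uniform tail control on both $c_n$ and $\xi_\infty$ to yield $c_n \to \xi_\infty$ in law in the topology of uniform convergence on compact sets.
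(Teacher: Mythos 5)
Your argument is correct but follows a genuinely different route from the paper's. The differences are worth noting.

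For the convergence of the truncated products $c_n^{(B)} \Rightarrow \xi_\infty^{(B)}$, you invoke the continuous mapping theorem directly on the map from locally finite point configurations (vague topology) to entire functions (compact-uniform topology). This works, since both spaces are Polish and the Sine process has no atoms, so the discontinuity set of the map is null. The paper instead carries out a more hands-on smoothing argument (Lemma~\ref{pointwise_converge}), which gives only pointwise convergence in law, and then separately bootstraps to uniform-on-compacts convergence via an equicontinuity criterion (Lemmas~\ref{pointwise_to_uniform} and~\ref{psi_continuous}). Your route is more compact, though you should note that besides atoms at $\pm B$ the map is also discontinuous at configurations with an atom at $0$ (for the Sine process that too is a null event, so the conclusion stands). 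For the tail estimate, your Abel summation rewrite
$\int_B^\infty (X_{(B,t]}^{(n)} - X_{[-t,-B)}^{(n)}) t^{-2}\,dt$
combined with the bound $\mathbb{E}|X_{(B,t]}^{(n)} - X_{[-t,-B)}^{(n)}| \lesssim (t-B)^{1-\delta/2}$ is the continuous analogue of the paper's discrete decomposition of the line into cubic blocks $[\ell^3, (\ell+1)^3)$ in Lemma~\ref{product_truncation1}; both extract the needed cancellation from hypotheses \emph{(i)} and \emph{(iii)} and produce an $O(B^{-c\delta})$ decay, so the choice is a matter of taste. One small point to polish in your final diagonal argument: the tail estimate controls $\log(c_n/c_n^{(B)})$ uniformly on $K$, but to apply the approximation lemma for weak convergence you need to bound $\sup_K |c_n - c_n^{(B)}|$ in probability; passing from the multiplicative to the additive bound requires observing that $\sup_K|c_n^{(B)}|$ is tight in $n$ (which follows from $c_n^{(B)} \Rightarrow \xi_\infty^{(B)}$). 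The paper's concluding Portmanteau step quietly relies on the same fact, so this is not a flaw specific to your route, but it is worth making explicit.
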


This theorem applies to a wide variety of point processes that are of interest in random matrix theory. As a demonstration, we apply it to random matrix ensembles generated by the classical compact groups and the Gaussian Unitary Ensemble (GUE).

\begin{thm}[Random orthogonal matrices]
\label{thm:ortho}
Denote the group of $n\times n$ special orthogonal matrices by $SO(n)$, and let $g$ be a random element of $SO(n)$ chosen according to Haar measure. For any fixed nonzero $E \in (-\frac{1}{2},\frac{1}{2})$, define the random function
$$
\xi_n^{SO}(s):= \frac{\det(e^{i2\pi(E+s/n)} - g)}{\det(e^{i 2\pi E} - g)}.
$$
Then in the topology of uniform convergence on compact sets,
$$
\xi_n^{SO}(s) \rightarrow \xi_\infty(s)
$$
in law.
\end{thm}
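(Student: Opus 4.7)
The plan is to apply Theorem~\ref{general_pp_theorem} to the $n\Z$-periodic extension of the rescaled eigenangles of $g\in SO(n)$. Write the eigenvalues of $g$ as $e^{i\theta_j^{(n)}}$ with $\theta_j^{(n)}\in(2\pi E-\pi,2\pi E+\pi]$, and set $x_j^{(n)}:=n(\theta_j^{(n)}-2\pi E)/(2\pi)$. Take the $n$-th point process to be
$$
\tilde x^{(n)} := \{x_j^{(n)}+kn : 1\le j\le n,\ k\in\Z\}.
$$
Using $e^{ia}-e^{ib}=2i\,e^{i(a+b)/2}\sin((a-b)/2)$, a direct computation gives
$$
\xi_n^{SO}(s) = e^{i\pi s}\prod_{j=1}^n\frac{\sin(\pi(x_j^{(n)}-s)/n)}{\sin(\pi x_j^{(n)}/n)},
$$
and Euler's product $\sin(\pi z)=\pi z\prod_{k\ge 1}(1-z^2/k^2)$ expands each sine-ratio as the conditionally convergent product $\prod_{k\in\Z}(1-s/(x_j^{(n)}+kn))$. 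This identifies $\xi_n^{SO}(s)$ precisely with the product $c_n(s)$ of Theorem~\ref{general_pp_theorem} applied to $\tilde x^{(n)}$.

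Next, I would verify that $(\tilde x^{(n)})_n$ is uniformly product-amenable. Let $\rho^{(n)}$ denote the density of $\tilde x^{(n)}$. The one-point density of the $SO(n)$ eigenangle process equals $n/(2\pi)+\rho_1^{(n)}(\theta)$ with $\rho_1^{(n)}$ uniformly bounded on compact subsets of $(0,2\pi)\setminus\{0,\pi\}$, as follows from the explicit Pfaffian kernel. Since $2\pi E$ is bounded away from those two points, $\rho^{(n)}$ is $n$-periodic and equals $1+O(1/n)$ uniformly, yielding regularity in expectation. For symmetry in expectation, the key observation is that for every $a$,
$$
\int_a^{a+n}[\rho^{(n)}(u)-\rho^{(n)}(-u)]\,du = n-n = 0
$$
by periodicity; hence $\E X_I^{(n)}-\E X_{-I}^{(n)}$ only collects a contribution from a single partial period of length $<n$ on which the integrand is $O(1/n)$, producing an error of $O(1)$ uniformly in $n$ and $|I|$. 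The variance bound reduces by periodicity to the variance of the eigenangle count on an interval of length at most $n$, and the polylog bound $\Var\lesssim\log(|I|+2)$ follows from the convergence of the Pfaffian correlation kernel to the sine kernel in the bulk. Almost sure convergence of $\sum_{|y|\le B}y^{-1}$ and $\sum y^{-2}$ follows from the classical identities $\sum_{k\in\Z}(x+kn)^{-1}=(\pi/n)\cot(\pi x/n)$ and $\sum_{k\in\Z}(x+kn)^{-2}=(\pi/n)^2/\sin^2(\pi x/n)$. Finally, for any fixed $M$ and $n$ large enough one has $\tilde x^{(n)}\cap[-M,M] = \{x_j^{(n)}\}\cap[-M,M]$, so local convergence of $\tilde x^{(n)}$ to the Sine process is equivalent to standard bulk universality for Haar-distributed $SO(n)$ at the energy $E$, valid for $E\in(-1/2,1/2)\setminus\{0\}$. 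Theorem~\ref{general_pp_theorem} then yields the claim.

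The main technical obstacle is the uniform-in-$n$ analysis of the Pfaffian correlation functions on scales up to $n$: one needs uniform bounds on the deviation of the one-point density from $n/(2\pi)$ and a linear-statistics variance bound valid on intervals of every length up to the full size of the group. Uniform convergence of the two-point kernel to the sine kernel on compact subsets of the bulk is classical, so the real work is in extending the required estimates to macroscopic scales. By contrast, the identification of $\xi_n^{SO}(s)$ with $c_n(s)$ in the first step is purely algebraic once conditional convergence is justified, and the exact $n$-periodicity of $\tilde x^{(n)}$ streamlines the verification of all of the amenability conditions.
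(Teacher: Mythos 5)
Your approach mirrors the paper's: periodize the rescaled eigenangles, identify $\xi_n^{SO}$ with the product $c_n$ algebraically (via the Euler product for sine), and then apply Theorem~\ref{general_pp_theorem} after checking product-amenability and local Sine convergence. The algebraic identification in your first paragraph is correct and matches \eqref{SO_product} in the paper, and your use of periodicity to organize the amenability checks is a nice observation.

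However, there is a genuine gap in the amenability verification, and a claim in it that is simply false. The eigenangle process of $SO(n)$ is \emph{determinantal}, not Pfaffian (that is the $\beta=2$ structure of a classical compact group); the kernel is $K_{SO(2n)}(x,y)=s_{2n-1}(x-y)+s_{2n-1}(x+y)$, so that the one-point density is $K_{SO(2n)}(x,x)=(2n-1)+s_{2n-1}(2x)$. The correction term $s_{2n-1}(2x)$ is a Dirichlet kernel: it is of size $2n-1$ near $x=0$ and $x=\tfrac12$, so the density of the rescaled, periodized process is \emph{not} $1+O(1/n)$ uniformly --- near the images of those angles it deviates by $O(1)$. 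The conditions in Definition~\ref{def:amenable} must hold for all intervals with $|I|\geq 1$, and once $|I|$ is mesoscopic or macroscopic the interval will sweep across these defect points, so a statement about boundedness on compact subsets of the bulk does not suffice. What rescues the bound is not a pointwise estimate on the density but the fact that the Dirichlet kernel has bounded integral: $\int_I s_{2n-1}(2x)\,dx = O(1)$ uniformly over intervals, which gives $\E W_I^{(2n)}=2n|I|+O(1)$ and hence both (i) and (ii). Your symmetry argument, which asserts the integrand $\rho^{(n)}(u)-\rho^{(n)}(-u)$ is $O(1/n)$ on a partial period, has the same flaw, though the conclusion again survives via the integrated Dirichlet bound. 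Most seriously, the variance bound $\Var\,U_I^{(n)}\lesssim\log(2+n|I|)$ --- which you invoke but do not prove, appealing only to local sine-kernel convergence --- is the substantive content of Proposition~\ref{proposition:SO(2n)_EV_bounds1} and requires the formula $\Var X_I=\int_I\int_{J\setminus I}K(x,y)^2$ of Lemma~\ref{lemma:determinantal_exp_and_var} together with the pointwise Dirichlet bound $s_N(t)=O(N/(1+N\|t\|))$, applied both to $s_{2n-1}(x-y)$ and to the non-translation-invariant term $s_{2n-1}(x+y)$. Local convergence in the bulk does not yield a variance bound uniform over all scales up to $n$. You flag this yourself as ``the real work,'' and that self-assessment is correct: without those kernel estimates the proof is incomplete.
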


Recall that the group of special orthogonal matrices is defined as follows: $SO(n) := \{g \in \mathcal{M}_{n\times n}(\mathbb{R}): g g^{T} = I,\; \det(g) = 1\}$. Here $\mathcal{M}_{n\times n}(\mathbb{R})$ is the set of all $n\times n$ matrices with real entries, and $g^t$ is the transpose of $g$.

\begin{thm}[Random symplectic matrices]
\label{thm:symp}
For $n$ an even integer, denote the group of $n\times n$ symplectic matrices by $Sp(n)$, and let $g$ be a random element of $Sp(n)$ chosen according to Haar measure. For any fixed nonzero $E \in (-\frac{1}{2},\frac{1}{2})$, define the random function
$$
\xi_n^{Sp}(s):= \frac{\det(e^{i2\pi(E+s/n)} - g)}{\det(e^{i 2\pi E} - g)}.
$$
Then in the topology of uniform convergence on compact sets,
$$
\xi_n^{Sp}(s) \rightarrow \xi_\infty(s)
$$
in law.
\end{thm}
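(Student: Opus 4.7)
The plan is to deduce the theorem from Theorem \ref{general_pp_theorem} applied to the rescaled eigenangle process of $Sp(n)$ about $2\pi E$. Denote the eigenvalues of $g \in Sp(n)$ by $e^{i\theta_j}$, $j=1,\dots,n$, and choose the unique lift $\tilde\theta_j \in [2\pi E - \pi, 2\pi E + \pi)$ of each $\theta_j$; set
$$
x_j^{(n)} := \frac{n}{2\pi}\bigl(\tilde\theta_j - 2\pi E\bigr) \in [-n/2, n/2).
$$
The identity $1 - e^{i\phi} = -2ie^{i\phi/2}\sin(\phi/2)$ gives
$$
\xi_n^{Sp}(s) = (-1)^n e^{i\pi s}\prod_{j=1}^{n}\frac{\sin(\pi(s-x_j^{(n)})/n)}{\sin(\pi x_j^{(n)}/n)},
$$
which is manifestly independent of the choice of lift (shifting any $x_j$ by $n$ flips the sign of both numerator and denominator). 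To reduce this to the linear product $c_n(s)$ of Theorem \ref{general_pp_theorem}, I observe that $\log\bigl(\xi_n^{Sp}(s)/c_n(s)\bigr) = \sum_j \bigl[g(s - x_j^{(n)}) - g(x_j^{(n)})\bigr]$, where $g(u) := \log\bigl(\sin(\pi u/n)/(\pi u/n)\bigr)$ is even, smooth on $(-n, n)$, with $g(u) = O(u^2/n^2)$ near $0$ and $|g(u)| = O(1)$ uniformly on $[-n/2, n/2]$. Using Taylor expansion together with the counting estimates below to control $\sum_j x_j^{(n)}$ and $\sum_j (x_j^{(n)})^2$, this sum is shown to tend to $0$ uniformly on compact sets of $s$ in probability. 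Note also that since $\{x_j^{(n)}\}$ has only $n$ points, the sums in \eqref{converging_sums} are trivially finite.

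It remains to verify the hypotheses of Theorem \ref{general_pp_theorem} for $\{x^{(n)}\}$. The convergence in law $x^{(n)} \Rightarrow y$ (Sine process) is classical bulk universality for the symplectic group at $E \in (-1/2, 1/2) \setminus \{0\}$, and follows from the asymptotics of the Pfaffian correlation kernel of $Sp(n)$, which in the bulk (away from the hard edges $\theta = 0, \pi$) converges to the sine kernel. For uniform product-amenability: the mean eigenangle density of $Sp(n)$ has the form $\rho_n(\theta) = n/(2\pi) + R_n(\theta)$, with $|R_n(\theta)| = O(1)$ uniformly in $n$ on compact subsets of $(-\pi,\pi)\setminus\{0\}$, and oscillating rapidly (wavelength $\sim 1/n$ in $\theta$, hence $\sim 1$ in $x$). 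Integration by parts against the oscillation gives $\int_I (\rho_x^{(n)}(x) - 1)\,dx = O(1/n)$ for any interval $I$ of length $|I| \geq 1$ in $x$, so
$$
\E X_I^{(n)} = |I| + O(1/n), \qquad \E X_I^{(n)} - \E X_{-I}^{(n)} = O(1/n),
$$
which verifies (i) and (ii) for any $\delta \in (0,1)$. For $|I| > n$ both quantities saturate at $n$ and the symmetry difference vanishes. Finally, (iii) is the standard variance bound $\Var X_I^{(n)} = O(\log |I|)$, a consequence of the Pfaffian expression for the pair correlation function in the bulk, which is much stronger than the required $|I|^{2-\delta}$.

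The main obstacle is the first step: rigorously establishing $\xi_n^{Sp}(s)/c_n(s) \to 1$ uniformly on compact sets, particularly handling contributions from $x_j^{(n)}$ near $\pm n/2$ where the small-argument Taylor expansion of $g$ breaks down. The key is to split the sum $\sum_j [g(s - x_j^{(n)}) - g(x_j^{(n)})]$ into a bulk part ($|x_j^{(n)}| \leq n^{1/2}$), handled by the leading Taylor expansion of $g$, and an edge part ($|x_j^{(n)}| > n^{1/2}$), handled by the boundedness of $g$ combined with the counting estimates from (ii) and (iii). Once this reduction is complete, the conclusion $\xi_n^{Sp}(s) \to \xi_\infty(s)$ in law follows directly from Theorem \ref{general_pp_theorem}.
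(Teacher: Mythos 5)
Your reduction of $\xi_n^{Sp}(s)$ to the linear product $c_n(s)$ over the \emph{finite} $n$-point process $\{x_j^{(n)}\}$ contains a genuine gap. Writing $\log\big(\xi_n^{Sp}(s)/c_n(s)\big)=\sum_j\big[g(x_j^{(n)}-s)-g(x_j^{(n)})\big]$ with $g(u)=\log\big(\sin(\pi u/n)/(\pi u/n)\big)$, the bulk part $|x_j^{(n)}|\le n^{1/2}$ is indeed harmless since $g(u)=O(u^2/n^2)$. But your treatment of the edge part $n^{1/2}<|x_j^{(n)}|\le n/2$ fails: there are $\Theta(n)$ such points, so ``boundedness of $g$'' only yields $O(n)$, and even using the first-order cancellation $g(x_j^{(n)}-s)-g(x_j^{(n)})=O(|s|\,|g'(x_j^{(n)})|)=O(|s|\,|x_j^{(n)}|/n^2)$ term by term, the sum over these points is $O(|s|)$, not $o(1)$. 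To push it to zero you must exploit that $g'$ is odd together with the approximate symmetry of the configuration about the origin --- e.g.\ an integration by parts of $\sum_j g'(x_j^{(n)})$ against the compensated counting function over the whole window $[-n/2,n/2]$, using both the $O(1)$ symmetry in expectation and the $O(\log)$ variance bound. None of this appears in your sketch, so the key claim $\xi_n^{Sp}/c_n\to1$ is unproved as written. (Two smaller points: the eigenangle process of $Sp(2n)$ is determinantal, not Pfaffian; and the correct mean count is $\E X_I^{(n)}=|I|+O(1)$, not $|I|+O(1/n)$ --- the $O(1)$ still suffices for conditions (i) and (ii).)

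The paper sidesteps this difficulty entirely. Instead of a finite process it uses the exact Euler product for the sine to write
$$
\xi_n^{Sp}(s)=e^{i\pi s}\lim_{B\to\infty}\prod_{|x_i^{(n)}|<B}\Big(1-\frac{s}{x_i^{(n)}}\Big)
$$
\emph{exactly}, where $\{x_i^{(n)}\}=\{n(\theta_j-E+\nu)\}_{1\le j\le n,\,\nu\in\Z}$ is the periodized (pulled back from $\R/\Z$ to $\R$) eigenangle process. There is then no ratio $\xi_n^{Sp}/c_n$ to control: Theorem \ref{general_pp_theorem} applies directly once this periodized process is checked to be uniformly product-amenable (which follows from the counting estimates of Proposition \ref{proposition:Sp(n)_EV_bounds}, exactly the bounds you cite) and to converge to the Sine process. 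I recommend adopting this periodization; otherwise you must supply the symmetrization and integration-by-parts argument for the edge sum in full.
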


Recall that the group of symplectic matrices is defined \emph{only for even} $n$ as follows: $Sp(n) := \{ g \in U(n):\; g J g^t = J\}$ where 
$$
J := \begin{pmatrix} 0 & I_{n/2} \\ - I_{n/2} & 0 \end{pmatrix}.
$$
The following corollary is the direct application of Theorems \ref{thm:ortho} and \ref{thm:symp}. In what follows, $g$ represents a random element of $SO(n)$ or $Sp(n),$ chosen according to Haar measure. 
\begin{corollary}
Let $r \in \N$ and $\a_j , \b_j \in \C$ for all $1 \leq j \leq r.$ Then as $n \to \infty$
\begin{align}
R\left( \a_1, \dots, \a_r; \, \b_1, \dots, \b_r\right) &:= \prod_{i=1}^r \frac{\det(e^{i2\pi(E+\a_i/n)}- g) }{\det(e^{i2\pi(E+\b_i/n)}- g)}  \nonumber\\
&\to \frac{\xi_\infty(\a_1) \cdots \xi_\infty(\a_r)}{\xi_\infty(\b_1) \cdots \xi_\infty(\b_r)}
\end{align}
in the topology of uniform convergence on compact sets, in law.
\end{corollary}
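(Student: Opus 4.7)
The plan is to deduce the corollary from Theorems \ref{thm:ortho} and \ref{thm:symp} by means of the continuous mapping theorem. The starting observation is the algebraic identity
$$
R(\alpha_1,\dots,\alpha_r;\beta_1,\dots,\beta_r) = \prod_{i=1}^r \frac{\xi_n(\alpha_i)}{\xi_n(\beta_i)},
$$
where $\xi_n$ denotes either $\xi_n^{SO}$ or $\xi_n^{Sp}$ depending on the ensemble under consideration. This identity holds because the common normalization factor $\det(e^{i 2\pi E} - g)$ appears in both the numerator and the denominator of every ratio $\xi_n(\alpha_i)/\xi_n(\beta_i)$ and cancels. It therefore suffices to prove that the functional
$$
\Phi : f \longmapsto \prod_{i=1}^r \frac{f(\alpha_i)}{f(\beta_i)}
$$
satisfies $\Phi(\xi_n) \to \Phi(\xi_\infty)$ in law.

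The second step is to check that $\Phi$ is continuous on the open subset of entire functions $f$ satisfying $f(\beta_i) \neq 0$ for every $i$, where the space of entire functions is equipped with the topology of uniform convergence on compact subsets of $\mathbb{C}$. This is immediate, since pointwise evaluation $f \mapsto f(z_0)$ is continuous in this topology and division by a nonzero complex number is continuous. If one prefers to read the conclusion as uniform convergence on compact subsets of the $(\alpha_1,\dots,\alpha_r)$-space with the $\beta_i$ held fixed, the same argument adapts without modification: uniform convergence of $\xi_n \to \xi_\infty$ on any compact subset $K \subset \mathbb{C}$ at once yields uniform convergence of $(\alpha_1,\dots,\alpha_r) \mapsto \prod_{i=1}^r \xi_n(\alpha_i)$ on $K^r$, and division by the scalar $\prod_i \xi_n(\beta_i) \to \prod_i \xi_\infty(\beta_i)$ preserves this.

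The third and final step is to verify that the exceptional set of $\Phi$ is negligible under the limiting law, namely that
$$
\mathbb{P}\bigl(\xi_\infty(\beta_i) \neq 0 \textrm{ for every } i = 1,\dots,r\bigr) = 1.
$$
By the construction in \eqref{limit_char}, the zero set of $\xi_\infty$ coincides with the support $\{y_j\}$ of the Sine process, which lies in $\mathbb{R}$. Hence if $\beta_i \notin \mathbb{R}$ then $\xi_\infty(\beta_i) \neq 0$ deterministically; and if $\beta_i \in \mathbb{R}$ then the probability that $\beta_i$ equals some $y_j$ is zero, since the Sine process has intensity $1$ with respect to Lebesgue measure and therefore assigns no mass to any singleton. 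The continuous mapping theorem then delivers the claim.

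The sole genuine technicality is this zero-avoidance argument; it would become a real obstacle if the $\beta_i$ were random or allowed to depend on $n$, but for fixed deterministic $\beta_i$ it is a routine consequence of the absolute continuity of the intensity of the Sine process. Everything else is a straightforward application of the continuous mapping theorem on top of Theorems \ref{thm:ortho} and \ref{thm:symp}.
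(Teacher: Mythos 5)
The paper offers no proof of this corollary; it is dismissed in one line as ``the direct application of Theorems \ref{thm:ortho} and \ref{thm:symp},'' so there is no argument to compare against. Your elaboration is correct and is plainly what the authors have in mind: the cancellation of $\det(e^{i2\pi E}-g)$ reduces $R$ to $\prod_i \xi_n(\alpha_i)/\xi_n(\beta_i)$, the map $f\mapsto\prod_i f(\alpha_i)/f(\beta_i)$ is continuous on the open set $\{f:\prod_i f(\beta_i)\neq 0\}$, and the continuous mapping theorem applies once one checks the limit law charges this set fully. The zero-avoidance step you supply is the one genuinely nontrivial ingredient that the paper leaves implicit, and your justification is sound: the zero set of $\xi_\infty$ is exactly the Sine process configuration $\{y_j\}\subset\R$, and since the Sine process is a determinantal process with bounded intensity, any fixed $\beta_i$ (real or not) is avoided almost surely.
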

Both $SO(n)$ and $Sp(n)$ are subgroups of the unitary group $U(n)$, and so all elements from either group have $n$ eigenvalues, all located on the unit circle.

We prove a theorem similar to Theorems \ref{thm:ortho} and \ref{thm:symp} for the Gaussian Unitary Ensemble (GUE). A $n\times n$ matrix $M$ from this ensemble is defined in the following way: we set 

$$
M = \frac{1}{\sqrt{2}}(B+B^\ast),
$$
for $B$ an $n\times n$ random matrix with i.i.d. random variable entries $Z_{ij}$ each with the distribution $N_\mathbb{C}(0,1)$ of a complex normal random variable with mean $0$ and variance $1$. $M$ is by definition Hermitian, with $n$ real eigenvalues, almost surely all contained in an interval of the form $(-(2+o(1))\sqrt{n}, (2+o(1))\sqrt{n})$ as $n\rightarrow\infty$. We show the following:

\begin{thm} 
\label{thm:main}
Fix $E\in (-2,2)$ and let $M$ be a $n\times n$ GUE matrix. Define the random analytic function,
\begin{align}
\label{eq:def_Xi_n}
\Xi_n^{GUE}(s) := & \frac{\det\Big(-\frac{s}{\rho_{sc}(E)\sqrt{n}} - E\sqrt{n}+M\Big)}{\det(-E\sqrt{n}+M)}
\end{align}
with 
\begin{align}\label{sem_law}
\rho_{sc}(x):= \frac{1}{2 \pi}\sqrt{(4-x^2)_+}.
\end{align}
Then in the topology of uniform convergence on compact sets,
\begin{align}
\label{eq:def_Xi_infty}
\Xi_n^{GUE}(s) \rightarrow e^{s(E/2\rho_{sc}(E)-i\pi)}\xi_\infty(s)
\end{align}
in law.
\end{thm}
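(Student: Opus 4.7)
My plan is to write $\Xi_n^{GUE}(s)$ as a finite Hadamard product over the rescaled eigenvalues of $M$ and then, in the spirit of Theorem \ref{general_pp_theorem}, decompose the logarithm into a ``drift'' piece and a ``regular'' piece; the extra exponential factor in the limit comes entirely from the drift. If $\lambda_1,\dots,\lambda_n$ are the eigenvalues of $M$, setting
$$x_j^{(n)} := \rho_{sc}(E)\sqrt{n}\,(\lambda_j - E\sqrt{n})$$
one checks directly that $\Xi_n^{GUE}(s)=\prod_{j=1}^n (1-s/x_j^{(n)})$, a finite product. Classical GUE bulk universality (convergence of the correlation kernel to the sine kernel after rescaling by $\rho_{sc}(E)\sqrt{n}$ around $E\sqrt{n}$) gives that $\{x_j^{(n)}\}$ converges in law to the sine process $\{y_i\}$ in the local topology. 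Conditions (ii) and (iii) of Definition \ref{def:amenable} are straightforward for $\{x_j^{(n)}\}$ from the semicircle law and standard GUE linear-statistics variance bounds, but condition (i) (symmetry) fails: because $\rho_{sc}$ is not flat at $E$, the process has a small asymptotic imbalance of mass on the left and right of zero.

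Quantifying this drift is the key new ingredient. A substitution $u = E + x/(\rho_{sc}(E) n)$ and the principal-value identity $\mathrm{PV}\!\int \rho_{sc}(u)/(u-E)\,du = -E/2$ (the real part of the semicircle Stieltjes transform on the support) yield
$$\mathbb{E}\sum_{|x_j^{(n)}|\le T} \frac{1}{x_j^{(n)}} \;\longrightarrow\; -\frac{E}{2\rho_{sc}(E)}$$
as $n\to\infty$ and then $T\to\infty$. Writing $\widetilde{\log}(1-u):=\log(1-u)+u=O(|u|^2)$, the decomposition
$$\log \Xi_n^{GUE}(s) = -s \sum_{j=1}^n \frac{1}{x_j^{(n)}} + \sum_{j=1}^n \widetilde{\log}\!\left(1-\frac{s}{x_j^{(n)}}\right)$$
reduces the proof to two limit statements. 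For the second sum, the quadratic decay $|\widetilde{\log}(1-s/x)| \lesssim |s|^2/|x|^2$ combined with condition (iii) gives uniform tail control, and local sine convergence then forces convergence in law to $\sum_i \widetilde{\log}(1-s/y_i)$. For the first sum, splitting at $|x_j^{(n)}|\le T$ versus $|x_j^{(n)}|>T$ and using the drift computation produces
$$\sum_{j=1}^n \frac{1}{x_j^{(n)}} \;\longrightarrow\; \lim_{B\to\infty}\sum_{|y_i|\le B}\frac{1}{y_i} \;-\; \frac{E}{2\rho_{sc}(E)}$$
jointly in law with $\{x_j^{(n)}\}\to\{y_i\}$. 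Combining these two limits and using $\log \xi_\infty(s)=i\pi s+\lim_B\sum_{|y_i|\le B}\log(1-s/y_i)$ collapses everything to $\log \Xi_n^{GUE}(s)\to s(E/(2\rho_{sc}(E))-i\pi) + \log \xi_\infty(s)$, which exponentiates to the target.

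The main obstacle lies in the drift/first-sum step: one must show that the near-singular terms $1/(\lambda_j-E\sqrt{n})$ for $\lambda_j$ close to $E\sqrt{n}$ are captured precisely by local sine convergence, while the remote contribution is governed by the principal-value Stieltjes transform together with variance bounds for sufficiently smooth linear statistics of GUE; the two regimes have to be glued smoothly enough that the tail fluctuation vanishes in probability as $T\to\infty$ uniformly in $n$. Once pointwise convergence in law is established, it is upgraded to convergence in the topology of uniform convergence on compact subsets of $\mathbb{C}$ via Montel's theorem together with uniform second-moment bounds on $\log \Xi_n^{GUE}$, which provide tightness in the space of entire functions.
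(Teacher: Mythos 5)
Your skeleton (the Hadamard product over $x_j^{(n)}=n\rho_{sc}(E)(\Lambda_j-E)$, a drift equal to $\rho_{sc}(E)^{-1}$ times the principal-value Stieltjes transform $-E/2$, and local sine convergence for the regular part) matches the paper's, but the central displayed claim about the drift is false as stated. For fixed $T$, the window $|x_j^{(n)}|\le T$ corresponds to $|\Lambda_j-E|\le T/(n\rho_{sc}(E))$, where the density of the rescaled process is flat up to an error $O(T/n)$; hence
$\mathbb{E}\sum_{|x_j^{(n)}|\le T} 1/x_j^{(n)} \to 0$ as $n\to\infty$, and letting $T\to\infty$ afterwards still gives $0$, not $-E/(2\rho_{sc}(E))$. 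The drift lives entirely in the complementary range $|x_j^{(n)}|>T$: it is a macroscopic and mesoscopic effect. This is precisely why the paper never feeds the full eigenvalue process into Theorem \ref{general_pp_theorem} (for which, as you note, symmetry would fail), but first strips off everything outside $|\Lambda_i-E|\le n^{-1/10}$ and shows that the stripped factor converges to the deterministic exponential.

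The step you yourself flag as ``the main obstacle'' --- proving that the tail sum $\sum_{|x_j^{(n)}|>T}1/x_j^{(n)}$ converges in probability to $-E/(2\rho_{sc}(E))$ with enough uniformity to glue the regimes --- is not resolved by appealing to ``standard GUE linear-statistics variance bounds,'' because the relevant test function $\mathds{1}_{\{|x-E|>T/(n\rho_{sc}(E))\}}\,(x-E)^{-1}$ is singular at a microscopically sharp cutoff and is not a smooth macroscopic statistic. This gap is exactly where the paper's new inputs enter: the Ben Arous--Guionnet large deviation principle (Theorem \ref{thm:LDP}) controls the macroscopic range $|\Lambda_i-E|>\delta$ (Proposition \ref{proposition:localization1}), while the uniform count estimates $\Var X_I\lesssim\log(2+|I|)$ of Proposition \ref{proposition:point_count_estimates} (which in turn need the Plancherel--Rotach kernel bound of Lemma \ref{K_GUE_bound}, and which the authors note do not appear in the literature --- so ``straightforward'' undersells conditions (ii)--(iii) as well) are combined with an integration by parts to kill the mesoscopic range $n^{-1/10}<|\Lambda_i-E|\le\delta$ (Corollary \ref{corollary:localization3}). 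Finally, your proposed upgrade from pointwise to locally uniform convergence via second-moment bounds on $\log\Xi_n^{GUE}$ is problematic on compact sets meeting $\R$, where $\Xi_n^{GUE}$ has zeros and its logarithm is unbounded; the paper instead obtains uniformity from the in-probability equicontinuity of the truncated products (Lemma \ref{psi_continuous}). So while the architecture is right, the two load-bearing estimates are respectively misstated and omitted.
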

\begin{corollary}
Let $r \in \N$ and $\a_j , \b_j \in \C$ for all $1 \leq j \leq r.$ Then as $n \to \infty$
\begin{align}
R^{GUE}\left( \a_1, \dots, \a_r; \, \b_1, \dots, \b_r\right) 
&:= \prod_{i=1}^r\frac{\det\Big(-\frac{\a_i}{\rho_{sc}(E)\sqrt{n}} - E\sqrt{n}+M\Big) }{\det\Big(-\frac{\b_i}{\rho_{sc}(E)\sqrt{n}} - E\sqrt{n}+M\Big) }  \nonumber\\
&\to e^{(E/2\rho_{sc}(E)-i\pi) \sum_{i=1}^r \left(\a_i-\b_i \right)}\frac{\xi_\infty(\a_1) \cdots \xi_\infty(\a_r)}{\xi_\infty(\b_1) \cdots \xi_\infty(\b_r)}
\end{align}
in the topology of uniform convergence on compact sets, in law.
\end{corollary}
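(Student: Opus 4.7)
First I would observe the algebraic identity that the common factor $\det(-E\sqrt{n}+M)$ cancels from each ratio, so that
\[
R^{GUE}(\alpha_1,\dots,\alpha_r;\beta_1,\dots,\beta_r) \;=\; \prod_{i=1}^r \frac{\Xi_n^{GUE}(\alpha_i)}{\Xi_n^{GUE}(\beta_i)}.
\]
Setting $\Xi_\infty(s) := e^{s(E/2\rho_{sc}(E) - i\pi)}\xi_\infty(s)$, Theorem \ref{thm:main} asserts that $\Xi_n^{GUE} \to \Xi_\infty$ in law in the topology of uniform convergence on compact sets. The corollary's right-hand side is exactly $\prod_{i=1}^r \Xi_\infty(\alpha_i)/\Xi_\infty(\beta_i)$, since the exponential prefactors combine to give $e^{(E/2\rho_{sc}(E) - i\pi)\sum_i(\alpha_i - \beta_i)}$. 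Hence the claim reduces to a continuous mapping statement applied to Theorem \ref{thm:main}.

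Next I would apply the continuous mapping theorem. Let $\Hc$ denote the space of entire functions on $\C$ equipped with uniform convergence on compact sets, a Polish space. For any fixed $(\alpha,\beta) \in \C^{2r}$, the functional $\Phi_{\alpha,\beta}(f) := \prod_i f(\alpha_i)/f(\beta_i)$ is continuous on the open subset $\{f \in \Hc : f(\beta_i) \neq 0 \text{ for all } i\}$, because point evaluation is continuous in this topology. The zeros of $\xi_\infty$ are almost surely a discrete subset of $\R$ (namely a phase-shifted realization of the Sine process), so for any fixed $\beta_i$ one has $\P(\xi_\infty(\beta_i) = 0) = 0$, and $\Phi_{\alpha,\beta}$ is continuous at $\Xi_\infty$ almost surely. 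This yields convergence in law for fixed $(\alpha,\beta)$.

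To promote this to convergence in the topology of uniform convergence on compact subsets of $\C^{2r}$, interpreting $R^{GUE}$ as a random meromorphic function of $(\alpha,\beta)$, I would invoke the Skorokhod representation theorem to realize $\Xi_n^{GUE} \to \Xi_\infty$ as almost-sure convergence in $\Hc$. For any compact $K \subset \C^{2r}$ not meeting the almost-surely discrete pole locus $\bigcup_{i,k}\{\beta_i = z_k\}$, where $\{z_k\}$ are the zeros of $\Xi_\infty$, Hurwitz's theorem ensures that the denominators $\Xi_n^{GUE}(\beta_i)$ are eventually bounded away from $0$ uniformly on $K$, and the uniform convergence of both numerator and denominator transfers to their ratio. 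The main obstacle is exactly this handling of the denominator zeros, but it is routine once the discreteness of the zero set of $\xi_\infty$ is invoked; after that the corollary is a direct consequence of Theorem \ref{thm:main}.
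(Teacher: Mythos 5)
Your proof is correct and takes the only natural route; the paper itself does not provide a separate proof, treating the corollary as a direct application of Theorem \ref{thm:main}. The algebraic cancellation giving $R^{GUE} = \prod_{i} \Xi_n^{GUE}(\alpha_i)/\Xi_n^{GUE}(\beta_i)$, followed by the continuous mapping theorem, is exactly what is needed, and you correctly handle the possible vanishing of denominators by noting that the zero set of $\xi_\infty$ is almost surely a discrete subset of $\R$, so each fixed $\beta_i$ is a.s.\ not a zero.
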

Sodin \cite{So} notes that Theorem \ref{thm:main} follows by a careful integration from a theorem proved by Aizenman and Warzel \cite{AiWa} for the logarithmic derivative of these random entire functions. Aizenman and Warzel's proof relies on the theory surrounding the Nevanlinna class of analytic functions. The alternative proof we give here may be of independent interest, relying, as it does, less on complex function theory. In particular in Proposition \ref{proposition:localization1} we establish a localization of the function $\Xi_n^{GUE}(s)$, and in Proposition \ref{proposition:point_count_estimates} we establish uniform estimates for counts of eigenvalues of GUE matrices, both of which we believe do not previously appear in the literature.

It is worth noting that there is nothing really special about the Sine process in the arguments that follow, except that the characteristic polynomials we will be interested in (among others) converge to it, and that the limit object $\xi_\infty(s)$ defined by \eqref{limit_char} is already known to converge almost surely (this is proved in \cite{ChNaNi}).

\section{From point processes to random functions: a proof of Theorem \ref{general_pp_theorem}}

Our proof of Theorem \ref{general_pp_theorem} proceeds along the following outline: We show that 1) in probability the functions $c_n(s)$ and $\xi_\infty(s)$ can be approximated on a compact set by truncated products over elements of the point process, and then 2) that these truncated products converge to one another as $n\rightarrow\infty$ owing to the convergence of the zero processes.

\subsection{Localization of products}

We begin with
\begin{lemma}
\label{product_truncation1}
For a sequence of uniformly product-amenable point processes $x^{(1)}, x^{(2)}, ...$ with the sums in \eqref{converging_sums} converging almost surely, we have for any compact set $K \subset \mathbb{C}$ and $\varepsilon > 0$,
$$
\lim_{A\rightarrow\infty} \sup_n \;\mathbb{P}\Big(\sup_{s\in K} \Big| \prod_{\left|x_i^{(n)}\right| \geq A} \Big(1 - \frac{s}{x_i^{(n)}}\Big) - 1 \Big| \geq \varepsilon \Big) = 0.
$$
\end{lemma}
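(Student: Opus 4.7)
The plan is to take logarithms and show that the sum of $\log(1-s/x_i^{(n)})$ over the tail of the point process is small in probability. For any compact set $K\subset \C$, fix $R$ with $|s|\leq R$ for all $s\in K$, and consider $A\geq 2R$, so that $|s/x_i^{(n)}|\leq 1/2$ whenever $|x_i^{(n)}|\geq A$. Each logarithm is then well-defined on the principal branch, and the Taylor bound $|\log(1-z)+z|\leq |z|^2$ for $|z|\leq 1/2$ gives, summing over points with $A\leq |x_i^{(n)}|\leq B$,
$$
\sup_{s\in K}\Big|\log\prod_{A\leq |x_i^{(n)}|\leq B}\Big(1-\frac{s}{x_i^{(n)}}\Big)\Big| \;\leq\; R\Big|\sum_{A\leq |x_i^{(n)}|\leq B}\frac{1}{x_i^{(n)}}\Big| + R^2\sum_{|x_i^{(n)}|\geq A}\frac{1}{|x_i^{(n)}|^2}.
$$
Together with the estimate $|e^w-1|\leq 2|w|$ for $|w|\leq 1$, it suffices to show that each of the two sums on the right is small in probability as $A\to\infty$, uniformly in $n$ (and uniformly in $B$ for the first).

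For the quadratic term, Abel summation against the counting measure gives
$$
\sum_{A\leq |x_i^{(n)}|\leq B}\frac{1}{|x_i^{(n)}|^2} = \frac{X_{[A,B]}^{(n)}+X_{[-B,-A]}^{(n)}}{B^2} + 2\int_A^B\frac{X_{[A,x]}^{(n)}+X_{[-x,-A]}^{(n)}}{x^3}\,dx.
$$
By condition (ii), the expectations of the boundary term and the integral are $O(1/B)$ and $O(1/A)$ respectively, uniformly in $n$. Letting $B\to\infty$ and applying Markov's inequality gives the required estimate.

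The linear sum is the main obstacle: being only conditionally convergent, the argument must exploit the cancellation between the two half-lines. Splitting into positive and negative parts and applying Abel summation yields
$$
\sum_{A\leq |x_i^{(n)}|\leq B}\frac{1}{x_i^{(n)}} = \frac{D(B)}{B} + \int_A^B\frac{D(x)}{x^2}\,dx, \qquad D(x) := X_{[A,x]}^{(n)} - X_{[-x,-A]}^{(n)},
$$
and this is precisely where conditions (i) and (iii) must be coupled through the difference $D$: (i) gives $|\mathbb{E} D(x)|\lesssim (x-A)^{1-\delta}$, (iii) gives $\operatorname{Var} D(x)\lesssim (x-A)^{2-\delta}$, and together $\mathbb{E}|D(x)|\lesssim x^{1-\delta/2}$ uniformly in $n$. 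Consequently $\mathbb{E}|D(B)|/B\to 0$ as $B\to\infty$, while $\mathbb{E}\int_A^\infty |D(x)|/x^2\,dx \lesssim A^{-\delta/2}$. A final application of Markov converts these expectation bounds into probability bounds, completing the proof.
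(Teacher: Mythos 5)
Your proof is correct. It differs from the paper's in the choice of decomposition used to tame the conditionally convergent linear sum. The paper groups the points into blocks $[\ell^3,(\ell+1)^3)$, replaces $1/x_i$ by $1/\ell^3$ on each block (incurring an $O(\ell^{-4})$ error per point), and then applies the $L^2$ triangle inequality to $\sum_\ell (Z_\ell - \mathbb{E}Z_\ell)/\ell^3$ with $Z_\ell := X_{[\ell^3,(\ell+1)^3)} - X_{-[\ell^3,(\ell+1)^3)}$. You instead use a Stieltjes summation by parts, rewriting the tail sum as $D(B)/B + \int_A^B D(x)/x^2\,dx$ with the cumulative difference $D(x) = X_{[A,x]}^{(n)} - X_{[-x,-A]}^{(n)}$, and then integrate the pointwise moment bound $\mathbb{E}|D(x)| \lesssim |\mathbb{E}D(x)| + \sqrt{\operatorname{Var}D(x)} \lesssim (x-A)^{1-\delta/2}$ against $x^{-2}$. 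The underlying mechanism is identical in both arguments (the essential cancellation lives in the difference of counts on the two half-lines, and is controlled by combining the symmetry condition (i) with the variance condition (iii)), but your Abel-summation route avoids the somewhat arbitrary choice of block size $\ell^3$ and is arguably the more natural organization; it also yields the marginally better tail $A^{-\delta/2}$ in place of the paper's $A^{-\delta/3}$. One point you gloss over is the passage from a bound on $\mathbb{E}|S_B|$, uniform in $B$, to a bound on $\mathbb{P}(|\lim_B S_B|\geq\varepsilon)$: since the limit exists almost surely by hypothesis, one can use Fatou (for sets) on the events $\{|S_B|\geq\varepsilon/2\}$, or note directly that $D(B)/B\to 0$ in probability hence $\sum_{|x_i|\geq A}1/x_i = \int_A^\infty D(x)/x^2\,dx$ a.s. Either way it is routine, but worth spelling out.
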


\begin{proof}
For $A$ sufficiently large (based on $K$), we have
\begin{equation}
\label{exp_expand}
\prod_{|y_i^{(n)}| \geq A} \left( 1 - \frac{s}{x^{(n)}_i}\right) = \exp\left(-s \sum_{|x_i^{(n)}| \geq A} \frac{1}{x_i^{(n)}} + O\left(\sum_{|x_i^{(n)}| \geq A} \frac{1}{|x_i^{(n)}|^2}\right)\right).
\end{equation}
(That the product on the left hand side converges almost surely is clear from this relation.) We show that as $A\rightarrow\infty$,
\begin{equation}
\label{sum_1/y}
\sup_n \,\mathbb{P}\Big(\Big| \sum_{\left|x_i^{(n)}\right| \geq A} \frac{1}{x_i^{(n)}}\Big| \geq \varepsilon \Big) \rightarrow 0,
\end{equation}
\begin{equation}
\label{sum_1/y^2}
\sup_n \,\mathbb{P}\Big( \sum_{|x_i^{(n)}| \geq A} \frac{1}{ |x_i^{(n)}|^2} \geq \varepsilon \Big) \rightarrow 0,
\end{equation}
which from \eqref{exp_expand} are sufficient to prove the lemma.

We examine \eqref{sum_1/y^2} first. Note that from a dyadic decomposition,
$$
\mathbb{E}\; \sum_{|x_i^{(n)}| \geq A} \frac{1}{|x_i^{(n)}|^2} \leq \mathbb{E} \sum_{2^{k+1} \geq A} \frac{1}{4^k}\left(X_{[2^k, 2^{k+1})} + X_{-[2^k, 2^{k+1})}\right) \lesssim \sum_{2^{k+1} \geq A} \frac{1}{2^k},
$$
by the regularity in expectation of the counts $X_I$ (Definition \ref{def:amenable}, Condition \emph{(ii)}). As $A\rightarrow\infty$ this tends to $0$ uniformly in $n$, and \eqref{sum_1/y^2} follows from Markov's inequality.

For \eqref{sum_1/y}, rather than a dyadic decomposition, we make a decomposition along intervals $[\ell^3, (\ell+1)^3),$
\begin{align*}
\sum_{|y_i^{(n)}| \geq A} \frac{1}{y_i^{(n)}} &= \sum_{(\ell+1)^3 \geq A} \sum_{\substack{ |y_i^{(n)}| \geq A \\ \ell^3 \leq |y_i^{(n)}| < (\ell+1)^3}} \frac{1}{y_i^{(n)}} \\
&= \sum_{(\ell+1)^3 \geq A} \frac{X_{[\ell^3, (\ell+1)^3)} - X_{-[\ell^3, (\ell+1)^3)}}{\ell^3}
\\ &  + O\Big( \sum_{(\ell+1)^3 \geq A} \frac{1}{\ell^4} (X_{[\ell^3, (\ell+1)^3)} + X_{-[\ell^3, (\ell+1)^3)} )\Big).
\end{align*}
Writing $Z_\ell:= X_{[\ell^3, (\ell+1)^3)} - X_{-[\ell^3, (\ell+1)^3)}$ for notational reasons, symmetry in expectation (Definition \ref{def:amenable}, Condition \emph{(i)}) implies that
\begin{align*}
\Big\| \sum_{|y_i^{(n)}| \geq A} \frac{1}{y_i^{(n)}} \Big\|_{L^2}  & \leq \Big\| \sum_{(\ell+1)^3 \geq A} \frac{Z_\ell - \mathbb{E}\, Z_\ell} {\ell^3} \Big\|_{L^2} + O\Big( \sum_{(\ell+1)^3 \geq A} \frac{\ell^{2(1-\delta)}}{\ell^3}\Big),
\\ & + O\Big(  \sum_{(\ell+1)^3 \geq A}  \frac{1}{\ell^4} (  ||X_{[\ell^3, (\ell+1)^3)}||_{L^2} + 
  ||X_{-[\ell^3, (\ell+1)^3)} ||_{L^2} ) \Big)
\end{align*}
since by \emph{(i)}, $\mathbb{E}\, Z_\ell \lesssim |[\ell^3, (\ell+1)^3)|^{1-\delta} \lesssim \ell^{2(1-\delta)}.$ But the above expression may be further simplified using regularity in expectation and variance.
Indeed, combining \emph{(ii)} and \emph{(iii)}, we get 
$$||X_{[\ell^3, (\ell+1)^3)}||^2_{L^2}
=  (\mathbb{E} [ X_{[\ell^3, (\ell+1)^3)}])^2 + \operatorname{Var} (X_{[\ell^3, (\ell+1)^3)})
= O \left( \ell^4 + \ell^{4 - 2 \delta} \right),$$
and 
$$\| Z_\ell - \mathbb{E}\, Z_\ell \|_{L^2} 
= O(\ell^{2 - \delta}),$$
 which gives
\begin{align*}
\Big\| \sum_{|y_i^{(n)}| \geq A} \frac{1}{y_i^{(n)}} \Big\|_{L^2} & \lesssim \sum_{(\ell+1)^3 \geq A}
\left( \frac{\ell^{2- \delta}}{\ell^3}+ \frac{\ell^{2(1-\delta)}}{\ell^3}
+ \frac{(\ell^4 + \ell^{4-2 \delta})^{\frac{1}{2}}}{\ell^4}  \right), \\
\end{align*}
which tends to zero uniformly in $n$ as $A\rightarrow\infty$. Then, \eqref{sum_1/y}  follows from Markov's inequality, and the lemma is proved.
\end{proof}
Define the random variable $Y_I:= \#\{ y_i \in I\}$ to be the count of points of the Sine process that lie in an interval $I$. Then the variables $Y_I$ also satisfy symmetry in expectation and regularity in expectation and variance (that is, $\mathbb{E} Y_I = \mathbb{E} Y_{-I} + O(|I|^{1-\delta}),$ $\mathbb{E} Y_I \lesssim |I|,$ and $\Var\; Y_I \lesssim |I|^{2-\delta}$). The same proof therefore demonstrates that 
\begin{lemma}
\label{product_truncation2}
For $y$ the Sine point process, we have for any compact set $K \subset \mathbb{C}$ and $\varepsilon > 0$,
$$
\lim_{A\rightarrow\infty} \;\mathbb{P}\Big(\sup_{s\in K} \Big| \prod_{|y_i| \geq A} \Big(1 - \frac{s}{y_i}\Big) - 1 \Big| \geq \varepsilon \Big) = 0.
$$
\end{lemma}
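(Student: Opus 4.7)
The plan is to reduce the claim to a direct application of the proof of Lemma \ref{product_truncation1}: the statement for the sine process is exactly the single-process analogue of that lemma, so it suffices to verify that the sine process $y$ itself satisfies the three regularity estimates of Definition \ref{def:amenable} and that the sums in \eqref{converging_sums} converge almost surely for $y$. Once this is done, the proof of Lemma \ref{product_truncation1} carries over verbatim, with the $\sup_n$ simply dropped.

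For the three conditions, I would use the fact that the sine process is the determinantal point process on $\mathbb{R}$ with kernel $K(x,y) = \sin(\pi(x-y))/(\pi(x-y))$, which has constant diagonal $K(x,x) = 1$. Thus $\mathbb{E}\, Y_I = \int_I K(x,x)\,dx = |I|$, which gives symmetry in expectation ($\mathbb{E}\, Y_I = \mathbb{E}\, Y_{-I}$ exactly) and regularity in expectation ($\mathbb{E}\, Y_I \lesssim |I|$) for free. For the variance, the standard determinantal identity
$$
\Var\, Y_I \;=\; \int_I \int_{I^c} |K(x,y)|^2\,dx\,dy
$$
together with the classical asymptotic $\Var\, Y_I = \pi^{-2} \log |I| + O(1)$ yields $\Var\, Y_I \lesssim |I|^{2-\delta}$ for any $\delta \in (0,2)$, which is vastly more than what is needed.

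The almost sure convergence of $\lim_{B\to\infty} \sum_{|y_i|\le B} 1/y_i$ and of $\sum 1/|y_i|^2$ is already established in \cite{ChNaNi}, since this is precisely what is needed to define the limit object $\xi_\infty(s)$ via \eqref{limit_char}. With these ingredients, the dyadic decomposition controlling $\sum_{|y_i|\ge A} 1/|y_i|^2$ and the cubic decomposition controlling $\sum_{|y_i|\ge A} 1/y_i$ in the proof of Lemma \ref{product_truncation1}, followed by Markov's inequality, go through without any change, and the exponential expansion \eqref{exp_expand} then produces the desired uniform-on-compacta estimate.

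There is really no main obstacle: the only nontrivial input is the well-known logarithmic variance of linear statistics for the sine process, and the rest is a transcription of the previous proof with the uniformity in $n$ removed.
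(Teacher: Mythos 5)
Your proposal is correct and matches the paper's approach exactly: the paper observes that the sine-process counts $Y_I$ satisfy the three conditions of Definition \ref{def:amenable} and that the relevant sums converge almost surely (referring to \cite{ChNaNi}), then invokes the proof of Lemma \ref{product_truncation1} verbatim with the $\sup_n$ dropped. Your write-up simply makes explicit (constant diagonal of the sine kernel, translation invariance for exact symmetry, logarithmic variance of linear statistics) what the paper asserts in a single sentence.
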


\subsection{Convergence of random products: pointwise and uniformly}

We also have,
\begin{lemma}
\label{pointwise_converge}
Fix $A > 0,$ as long as the point processes $x^{(n)}$ tend in law to the Sine process $y$, we have in law in the topology of pointwise convergence
$$
\prod_{|x_i^{(n)}| < A} \Big(1 - \frac{s}{x_i^{(n)}}\Big) \rightarrow \prod_{|y| < A} \Big(1 - \frac{s}{y_i}\Big).
$$
\end{lemma}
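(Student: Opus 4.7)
The plan is to view the finite product
$$
F_A(\mu) := \prod_{x \in \supp \mu \cap (-A,A)} \left(1 - \frac{s}{x}\right)
$$
as a deterministic functional on the Polish space $\mathcal{M}$ of locally finite simple point measures on $\mathbb{R}$, endowed with the vague topology, and then to apply the continuous mapping theorem. Since the hypothesis is that $x^{(n)} \to y$ in law as point processes in this topology, it suffices to establish that $F_A$ is continuous at almost every realization of $y$.

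The main step is therefore to verify that $F_A$ is continuous at every $\mu \in \mathcal{M}$ satisfying $\mu(\{-A,0,A\})=0$. If $\mu_n \to \mu$ vaguely and $\mu$ charges neither $\pm A$ nor $0$, then for each small $\varepsilon > 0$ with $\mu(\{\pm \varepsilon\})=0$, the restrictions of $\mu_n$ to $[-A+\varepsilon,-\varepsilon]\cup[\varepsilon,A-\varepsilon]$ have the same total mass as $\mu$ for all $n$ large, and the atoms may be labelled so as to converge individually to those of $\mu$. Since $x \mapsto 1 - s/x$ is continuous on any set bounded away from $0$, the corresponding partial products converge. Letting $\varepsilon \to 0$ (possible because $\mu$ has no atom in a neighborhood of $0$) and using that the two boundary annuli contain no atoms of $\mu$ near $\pm A$ either, yields $F_A(\mu_n) \to F_A(\mu)$.

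To conclude, I would observe that the Sine process has a bounded one-point correlation function, so the law of $y$ gives probability zero to each of the events that $y$ has a point at $-A$, $0$, or $A$. Thus $F_A$ is continuous almost surely under the law of $y$, and the continuous mapping theorem gives $F_A(x^{(n)}) \to F_A(y)$ in distribution, which is precisely the assertion. The principal delicacy lies in the continuity step: vague convergence controls only integrals against compactly supported continuous test functions, so the matching of individual atoms of $\mu_n$ with those of $\mu$ must be justified carefully, exploiting both the absence of mass at the boundary $\pm A$ and the simplicity of the limiting point configuration.
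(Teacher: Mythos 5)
Your argument is correct and reaches the same conclusion, but it is a genuinely different route from the paper's. You treat the truncated product as a deterministic functional $\mu\mapsto F_A(\mu)$ on the space of locally finite point configurations with the vague topology, verify that $F_A$ is continuous at every configuration avoiding $0$ and $\pm A$, note that the Sine process almost surely produces such a configuration, and invoke the version of the continuous mapping theorem that tolerates a null set of discontinuities. The paper instead avoids having to prove any continuity statement about the discontinuous functional $F_A$: it writes the logarithm of each factor, multiplies by three explicit cutoffs $H(\varepsilon^{-1}|v|)$, $H(\varepsilon^{-1}(A-|v|))$, $H(\varepsilon^{-1}|v-s|)$ to manufacture a genuinely continuous, compactly supported test function $f_{s,A}^\varepsilon$, applies the plain continuous mapping theorem to the linear statistic $\sum_i f_{s,A}^\varepsilon(x_i^{(n)})$, and then shows that the smoothed product agrees with the true one except on an event of probability $O(\varepsilon)$, using translation invariance and $\E Y_{(-\varepsilon,\varepsilon)}=2\varepsilon$ for the Sine process, together with weak convergence to transfer this bound to $x^{(n)}$. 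A side effect of your route is that it bypasses the branch ambiguity of $\arg(1-s/v)$ for real $s$ entirely, since the finite product is continuous as a $\mathbb{C}$-valued map away from $v=0$, whereas the paper needs the third smoothing factor $H(\varepsilon^{-1}|v-s|)$ precisely to handle that jump.

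Two small points to tighten in your write-up. First, the lemma asserts convergence in the topology of pointwise convergence, which is convergence of all finite-dimensional marginals; you should apply your continuity argument to the vector-valued map $\mu\mapsto\left(F_A^{s_1}(\mu),\dots,F_A^{s_k}(\mu)\right)$ rather than to a single $s$, noting that the exceptional set where any coordinate is discontinuous is still a null set. Second, in the atom-matching step you should say explicitly that local finiteness of $\mu$ and $\mu(\{0,\pm A\})=0$ give some $\varepsilon_0>0$ for which $\mu$ charges no point in $(-\varepsilon_0,\varepsilon_0)\cup(A-\varepsilon_0,A]\cup[-A,-A+\varepsilon_0)$, and then that vague convergence forces $\mu_n$ to be void of atoms in slightly smaller such neighborhoods for $n$ large; this is what guarantees that no factor $1-s/x$ in $F_A(\mu_n)$ has $x$ near $0$, where the factor is unbounded, and that the total atom counts in $(-A,A)$ stabilize.
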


\begin{proof} Define the random functions
\begin{equation}
\label{psi_def}
\psi_n(s):= \prod_{|x_i^{(n)}| < A} \Big(1 - \frac{s}{x_i^{(n)}}\Big), \quad \psi_\infty(s):= \prod_{|y| < A} \Big(1 - \frac{s}{y_i}\Big).
\end{equation}
We must show for any fixed $s_1,s_2,...,s_k \in \mathbb{C}$ that in law
\begin{equation}
\label{pointwise_conv1}
\left( \psi_n(s_1),\psi_n(s_2),...,\psi_n(s_k) \right) \rightarrow
\left( \psi_\infty(s_1),\psi_\infty(s_2),...,\psi_\infty(s_k) \right).
\end{equation}
To that endeavor, we will need a smoothing argument. Define for $x \in \R$, $H(x) := 1 \wedge x^+$ so that the point-wise limit $\lim_{\varepsilon \rightarrow 0} H(\varepsilon^{-1} \cdot) = \mathds{1}_{\{\cdot > 0\}}$ is the Heaviside function.

Choose $\varepsilon > 0$ and define
$$
f_{s,A}^\varepsilon(v)
          := \Big(\log\Big| 1- \frac{s}{v}\Big| + i \arg\Big(1-\frac{s}{v}\Big)\Big)
             \ H( \varepsilon^{-1} |v| )
             \ H( \varepsilon^{-1} (A-|v|) )
             \ H( \varepsilon^{-1} |v-s| ),
$$
with $\arg$ being a determination of the argument. Notice that one can pick a continuous determination in $v \in \R$ unless $s$ is real, in which case there is a jump for $v = s$. This jump is handled by the smoothing term $H( \varepsilon^{-1} |v-s| )$. In any case, for all $s \in \C$, $f_{s,A}^\varepsilon$ is continuous and compactly supported on $\R$.

By weak convergence of the point process $x^{(n)}$ to $y$, we have the convergence in law:
$$ \left( \sum_i f^\varepsilon_{s_j,A}(x_i^{(n)}); \ 1 \leq j \leq k \right)
   \stackrel{}{\rightarrow}
   \left( \sum_i f^\varepsilon_{s_j,A}(y); \ 1 \leq j \leq k \right) \ .
$$

Now define,
$$
\widetilde{\psi}_n(s)      := \exp\Big( \sum_i f^\varepsilon_{s,A}(x_i^{(n)}) \Big),
\quad 
\widetilde{\psi}_\infty(s) := \exp\Big( \sum_i f^\varepsilon_{s,A}(y_i) \Big) \ .
$$
By the continuous mapping theorem, the following convergence in law holds:
\begin{equation}
\label{tilde_converge}
\left( \widetilde\psi_n(s_1),\widetilde\psi_n(s_2),...,\widetilde\psi_n(s_k) \right)
\rightarrow
\left( \widetilde\psi_\infty(s_1),\widetilde\psi_\infty(s_2),...,\widetilde\psi_\infty(s_k) \right) \ .
\end{equation}

Yet $\widetilde\psi_\infty(s) = \psi_\infty(s)$ unless $Y_{(-\varepsilon, \varepsilon)} \geq 1$ or $Y_{(A-\varepsilon, A+\varepsilon)} \geq 1$ or $Y_{(-\varepsilon + \Re s, \Re s + \varepsilon)} \geq 1$. But
$$
\mathbb{P}( Y_{(-\varepsilon,\varepsilon)} \geq 1)
=
\mathbb{P}( Y_{(A-\varepsilon,A+\varepsilon)} \geq 1)
=
\mathbb{P}( Y_{(\Re s -\varepsilon, \Re s + \varepsilon)} \geq 1) \leq \mathbb{E}\, Y_{(-\varepsilon,\varepsilon)} = 2\varepsilon,
$$
as $y$ is the (translation invariant) Sine process. 

Because $x^{(n)} \rightarrow y$,
$$
\lim_{n\rightarrow\infty} \mathbb{P} (X^{(n)}_{(-\varepsilon, \varepsilon)} \geq 1)  \leq 2\varepsilon,
$$
and the same holds for the intervals $(A-\varepsilon,A+\varepsilon)$ and $(\Re s -\varepsilon, \Re s + \varepsilon)$. Moreover, unless $X^{(n)}_{(-\varepsilon, \varepsilon)} \geq 1$ or $X^{(n)}_{(A-\varepsilon, A+\varepsilon)} \geq 1$ or $X^{(n)}_{(-\varepsilon  + \Re s, \Re s + \varepsilon)} \geq 1$, we have $\widetilde\psi_n(s) = \psi_n(s)$. Hence with probability $1-O(\varepsilon)$, the convergence in \eqref{tilde_converge} governs that in \eqref{pointwise_conv1}. As $\varepsilon$ is arbitrary, this proves \eqref{pointwise_conv1} and thus the lemma.
\end{proof}

A simple criterion allows us to bootstrap this result to convergence in the topology of uniform convergence.

\begin{definition}
Let $\phi_1, \phi_2,...$ be a family of random functions, each mapping $\mathbb{C}\rightarrow\mathbb{C}$. We say this family is \textbf{in probability compact-equicontinuous} if for any compact $K \subset \mathbb{C}$ and for any $\varepsilon, \delta > 0$, there exists $\Delta > 0$ such that
$$
\limsup_{n\rightarrow\infty}\mathbb{P}\Big(\sup_{\substack{|s_1-s_2| < \Delta \\ s_1, s_2 \in K}} |\phi_n(s_1) - \phi_n(s_2)| \geq \delta \Big) < \varepsilon.
$$
\end{definition}

\begin{lemma}
\label{pointwise_to_uniform}
Let $\phi$ and $\phi_1, \phi_2,...$ each be a random function mapping $\mathbb{C}\rightarrow\mathbb{C}$, and suppose that in the topology of pointwise convergence, $\phi_n(s) \rightarrow \phi(s)$
in law. If the family of random functions $\phi_1, \phi_2, ...$ is in probability compact-equicontinuous, then also in the topology of uniform convergence on compact sets,
$$
\phi_n(s) \rightarrow \phi(s)
$$ 
in law.
\end{lemma}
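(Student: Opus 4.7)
The plan is to pass from convergence in law of finite-dimensional marginals (which is provided by the pointwise hypothesis) to convergence in law in the space $C(K)$ of continuous functions on an arbitrary compact $K\subset\mathbb{C}$, endowed with the uniform norm, via a tightness argument of Prokhorov-type, and finally to extend to the full topology of uniform convergence on compact sets by exhausting $\mathbb{C}$ with an increasing sequence of compacta.

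First I would show that for each fixed compact $K\subset\mathbb{C}$, the family $\{\phi_n|_K\}$ is tight as a sequence of random elements of $C(K)$. By the Arzel\`a--Ascoli characterization of relative compactness in $C(K)$, tightness reduces to two ingredients: (a) for some $s_0\in K$, the sequence $\phi_n(s_0)$ is tight in $\mathbb{C}$; and (b) the modulus-of-continuity condition, namely that for every $\eta>0$ one can find $\Delta>0$ with
\[
\limsup_{n\to\infty}\mathbb{P}\Big(\sup_{\substack{s_1,s_2\in K\\|s_1-s_2|<\Delta}}|\phi_n(s_1)-\phi_n(s_2)|\geq\eta\Big)<\eta.
\]
Condition (b) is the assumed in-probability compact-equicontinuity. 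Condition (a) follows from the pointwise hypothesis since $\phi_n(s_0)\to\phi(s_0)$ in law forces tightness of $\phi_n(s_0)$. Combining these, I would cover $K$ by finitely many balls of radius $\Delta$ with centers $s_1,\dots,s_N$, note that on the ``good event'' in (b) we have $\sup_K|\phi_n|\leq\max_i|\phi_n(s_i)|+\eta$, and deduce tightness of $\sup_K|\phi_n|$ from the tightness of the finite collection $\{\phi_n(s_i)\}$, again provided by pointwise convergence in law. This upgrades (a) to uniform boundedness in probability, which together with (b) yields tightness in $C(K)$.

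Next, by Prokhorov's theorem, any subsequence of $(\phi_n|_K)$ admits a further subsequence converging in law in $C(K)$ to some random element $\phi^\ast$. The finite-dimensional distributions of $\phi^\ast$ are forced, by the pointwise convergence hypothesis, to coincide with those of $\phi$ restricted to $K$. Since the Borel $\sigma$-algebra on $C(K)$ is generated by the evaluation maps $f\mapsto(f(s_1),\dots,f(s_k))$ for $s_j$ ranging over a countable dense subset of $K$, these finite-dimensional distributions uniquely determine the law on $C(K)$. Hence every subsubsequential limit has the same law, namely that of $\phi|_K$ (which is thereby identified as a bona fide random element of $C(K)$, in particular almost surely continuous). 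By the standard Urysohn-type principle (every subsequence has a sub-subsequence converging to the same limit), the whole sequence $\phi_n|_K$ converges in law to $\phi|_K$ in $C(K)$.

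Finally I would promote this to the topology of uniform convergence on compact sets on $\mathbb{C}$. This topology is metrizable (e.g.\ via $d(f,g)=\sum_{k\geq 1}2^{-k}(1\wedge\sup_{|s|\leq k}|f(s)-g(s)|)$), so convergence in law in this topology follows from convergence in law in each $C(\overline{D(0,k)})$ by a diagonal/continuous-mapping argument applied to the projections $f\mapsto f|_{\overline{D(0,k)}}$. I expect the main technical point is not any single estimate but the bookkeeping that the equicontinuity hypothesis, stated with $\limsup$ rather than $\sup$, is compatible with the Prokhorov tightness criterion, and that the limit law is correctly identified through the countable family of finite-dimensional marginals; once these standard measure-theoretic matters are cleanly handled, the rest is routine.
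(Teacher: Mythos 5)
Your proof is correct, but it takes a genuinely different route from the paper's. The paper proves the lemma by a direct Portmanteau argument: it writes an arbitrary open set in the topology of uniform convergence as a countable union of finite intersections of ``balls'' $A_n(\alpha,c;K)=\{\sup_{s\in K}|\phi_n(s)-\alpha|<c\}$, uses the in-probability equicontinuity to replace each compact $K_{ij}$ by a finite $\Delta$-net $K_{ij}^\ast$ at the cost of shrinking $c$ by $\delta$ and losing probability $\varepsilon$, applies the pointwise Portmanteau inequality to the resulting finite-dimensional event, and then sends $\varepsilon,\delta\to 0$. You instead invoke the standard machinery: Arzel\`a--Ascoli tightness criterion in $C(K)$ (tightness at a single point plus the modulus-of-continuity bound, which is exactly the equicontinuity hypothesis), Prokhorov's theorem for subsequential weak limits, identification of the limit by finite-dimensional marginals, and the sub-subsequence principle, followed by an exhaustion of $\mathbb{C}$ by compacts. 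The trade-offs: your argument is shorter given the textbook results it cites, but implicitly requires the $\phi_n$ to be almost surely continuous so that they live in $C(K)$ (harmless in the paper's applications, where everything is analytic, but the paper's argument does not need this); the paper's argument is more elementary and self-contained, bypassing Prokhorov and Arzel\`a--Ascoli. One small caveat on your final step: the implication you need is that convergence in law in each $C(\overline{D(0,k)})$ forces convergence in law in $C(\mathbb{C})$, and this is \emph{not} the continuous-mapping theorem applied to the restriction projections (which gives only the reverse implication); rather it again rests on tightness in $C(\mathbb{C})$ (assembled from tightness in each $C(\overline{D(0,k)})$ by a diagonal argument) together with identification of the limit law by its restrictions. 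This is routine but worth stating in the correct logical direction.
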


The proof involves relatively standard ideas, but we do not know of a reference. We include a proof here for the convenience of the reader.

\begin{proof}
For $\alpha \in \mathbb{C}$, $c > 0$ and $K$ a compact set, define the events
$$
A_n(\alpha, c; K):= \{\sup_{s\in K} |\phi_n(s)-\alpha| < c\},
$$
$$
A(\alpha, c; K) := \{\sup_{s \in K} |\phi(s)-\alpha| < c \}.
$$
By the Portmanteau lemma, we will have demonstrated $\phi_n(s) \rightarrow \phi(s)$ in the topology of uniform convergence if we show that
\begin{equation}
\label{port4}
\liminf_{n\rightarrow\infty} \mathbb{P}\Big( \bigcup_{i=1}^\infty \bigcap_{j=1}^{J_i} A_n(\alpha_{ij}, c_{ij}; K_{ij})\Big) \geq \mathbb{P}\Big( \bigcup_{i=1}^\infty \bigcap_{j=1}^{J_i} A(\alpha_{ij}, c_{ij}; K_{ij})\Big),
\end{equation}
for any sequences $\alpha_{ij}, c_{ij},$ and $K_{ij}$ of complex numbers, positive numbers, and compact sets respectively. Note
\begin{equation}
\label{port3}
\liminf_{n\rightarrow\infty} \mathbb{P}\Big( \bigcup_{i=1}^\infty \bigcap_{j=1}^{J_i} A_n(\alpha_{ij}, c_{ij}; K_{ij})\Big) \geq \liminf_{n\rightarrow\infty} \mathbb{P}\Big( \bigcup_{i=1}^I \bigcap_{j=1}^{J_i} A_n(\alpha_{ij}, c_{ij}; K_{ij})\Big),
\end{equation}
for any $I$. We show that for any $I$,
\begin{equation}
\label{port2}
\liminf_{n\rightarrow\infty} \mathbb{P}\Big( \bigcup_{i=1}^I \bigcap_{j=1}^{J_i} A_n(\alpha_{ij}, c_{ij}; K_{ij})\Big) \geq \mathbb{P}\Big( \bigcup_{i=1}^I \bigcap_{j=1}^{J_i} A(\alpha_{ij}, c_{ij}; K_{ij})\Big).
\end{equation}

We begin by noting that given any $\varepsilon > 0$ and $\delta > 0$, because $\{\phi_n\}$ is in probability equicontinuous, there exist finite sets $K_{ij}^\ast \subset K_{ij}$ such that
$$
\mathbb{P}\Big( \bigcup_{i=1}^I \bigcap_{j=1}^{J_i} A_n(\alpha_{ij}, c_{ij}; K_{ij})\Big) \geq \mathbb{P}\Big( \bigcup_{i=1}^I \bigcap_{j=1}^{J_i} A_n(\alpha_{ij}, c_{ij}-\delta; K_{ij}^\ast)\Big) - \varepsilon,
$$
for all sufficiently large $n$. (In fact, $K_{ij}^\ast$ may be taken to be a $\Delta$-net of $K_{ij}$.) Since $\phi_n(s)\rightarrow \phi(s)$ in law, in the pointwise topology, again applying the Portmanteau theorem (using that $K_{ij}^\ast$ is finite for all $i,j$),
\begin{equation}
\label{port1}
\liminf_{n\rightarrow\infty} \mathbb{P}\Big( \bigcup_{i=1}^I \bigcap_{j=1}^{J_i} A_n(\alpha_{ij}, c_{ij}-\delta; K_{ij}^\ast)\Big) \geq \mathbb{P}\Big( \bigcup_{i=1}^I \bigcap_{j=1}^{J_i} A(\alpha_{ij}, c_{ij}-\delta; K_{ij}^\ast)\Big). 
\end{equation}
Now if $K' \subset K$, note that $A(\alpha, c; K') \supset A(\alpha, c; K)$. Hence the right-hand side of \eqref{port1} is
$$
\geq \mathbb{P}\Big( \bigcup_{i=1}^I \bigcap_{j=1}^{J_i} A(\alpha_{ij}, c_{ij}-\delta; K_{ij})\Big),
$$
and we have shown
$$
\liminf_{n\rightarrow\infty} \mathbb{P}\Big( \bigcup_{i=1}^I \bigcap_{j=1}^{J_i} A_n(\alpha_{ij}, c_{ij}; K_{ij})\Big) \geq \mathbb{P}\Big( \bigcup_{i=1}^I \bigcap_{j=1}^{J_i} A(\alpha_{ij}, c_{ij}-\delta; K_{ij})\Big) - \varepsilon.
$$
But $\varepsilon$ and $\delta$ are arbitrary, taking $\varepsilon, \delta \rightarrow 0$ yields \eqref{port2}. Combined with \eqref{port3} and taking $I\rightarrow\infty$, this yields \eqref{port4}.
\end{proof}

We apply this to the problem at hand by showing that the truncated products $\psi_n(s) = \prod_{|x_i^{(n)}|< A}(1-s/x_i^{(n)})$ form a family that is in probability compact-equicontinuous.

\begin{lemma}
\label{psi_continuous}
Fix $A > 0$ and define $\psi_n(s)$ as before in \eqref{psi_def}. Suppose the family of point processes $x^{(1)}, x^{(2)},...$ that define $\psi_n(s)$ is uniformly product-amenable, and $x^{(n)}\rightarrow y$, the Sine process. Then the family of random functions $\psi_1, \psi_2, ...$ is in probability compact-equicontinuous.
\end{lemma}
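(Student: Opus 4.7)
The plan is to establish equicontinuity in probability by deriving a uniform bound on $|\psi_n'|$ over $K$ with high probability. Since $\psi_n$ is a polynomial, the only source of a large derivative is having some $x_i^{(n)}$ with $|x_i^{(n)}|$ very small, in which case a factor $1/|x_i^{(n)}|$ blows up. First I would truncate the degree: by regularity in expectation (Definition~\ref{def:amenable}(ii)) applied to $[-\max(A,1), \max(A,1)]$, we have $\mathbb{E}\,X^{(n)}_{[-A,A]} \lesssim A \vee 1$ uniformly in $n$, so Markov's inequality yields an integer $M$ with $\sup_n \mathbb{P}(X^{(n)}_{[-A,A]} > M) < \varepsilon/3$.

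Next, I would push the points away from the origin using the assumed convergence $x^{(n)} \to y$: exactly as in the proof of Lemma~\ref{pointwise_converge}, Portmanteau's theorem gives
$$\limsup_n \mathbb{P}(X^{(n)}_{[-\eta, \eta]} \geq 1) \leq \mathbb{P}(Y_{[-\eta,\eta]} \geq 1) \leq \mathbb{E}\,Y_{[-\eta,\eta]} = 2\eta,$$
which is $<\varepsilon/3$ for $\eta$ small. Set $R := \sup_{s \in K}|s|$ and define the good event $G_n := \{X^{(n)}_{[-A,A]} \leq M\} \cap \{X^{(n)}_{[-\eta,\eta]} = 0\}$. On $G_n$, every $x_i^{(n)} \in [-A,A]$ satisfies $|x_i^{(n)}| \geq \eta$, so for $|s| \leq R$,
$$|\psi_n'(s)| \leq \sum_j \frac{1}{|x_j^{(n)}|} \prod_{i \neq j}\Big(1 + \frac{|s|}{|x_i^{(n)}|}\Big) \leq \frac{M}{\eta}\Big(1 + \frac{R}{\eta}\Big)^{M-1} =: C.$$
Since any segment between points of $K$ stays in the disk $\{|s| \leq R\}$, integration yields $|\psi_n(s_1) - \psi_n(s_2)| \leq C|s_1 - s_2|$ on $G_n$ for all $s_1, s_2 \in K$.

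Choosing $\Delta := \delta/(2C)$ then gives
$$\limsup_n \mathbb{P}\Big(\sup_{\substack{|s_1-s_2| < \Delta \\ s_1, s_2 \in K}}|\psi_n(s_1)-\psi_n(s_2)| \geq \delta\Big) \leq \limsup_n \mathbb{P}(G_n^c) < \tfrac{2\varepsilon}{3} < \varepsilon,$$
which is the required in-probability compact-equicontinuity. The main subtlety lies in the second step: the uniform product-amenability hypotheses only control expected counts on intervals of length $\geq 1$, so to exclude very small $x_i^{(n)}$ one must invoke the weak convergence $x^{(n)} \to y$ together with the specific unit density of the Sine process, rather than any of the Definition~\ref{def:amenable} conditions directly.
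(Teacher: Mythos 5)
Your argument is correct and is essentially the same as the paper's: bound $\sup_{s\in K}|\psi_n'(s)|$ using the product rule, control the count $X^{(n)}_{(-A,A)}$ via regularity in expectation and Markov's inequality, and control the probability of a small point $|x_i^{(n)}|<\eta$ via the weak convergence $x^{(n)}\to y$ and the unit intensity of the Sine process. The only cosmetic difference is that you extract a deterministic Lipschitz constant $C$ on a high-probability good event and choose $\Delta=\delta/(2C)$ directly, whereas the paper phrases the same estimate as $\lim_{c\to\infty}\limsup_n\mathbb{P}(\sup_K|\psi_n'|\geq c)=0$.
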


\begin{proof}
We need only show for any fixed compact set $K$ and constant $A > 0$,
\begin{equation}
\label{equi_proof1}
\lim_{c\rightarrow\infty} \limsup_{n\rightarrow\infty} \mathbb{P}\Big( \sup_{s\in K} \Big| \frac{d}{ds} \psi_n(s) \Big| \geq c\Big) = 0.
\end{equation}
Let $R:=\sup_{s\in K} |s|$ and note that if (for $t>0$ an arbitrarily small parameter) we have $X_{(-t,t)}^{(n)} = 0$, then by differentiating the product defining $\psi_n$,
\begin{align*}
\Big| \frac{d}{ds} \psi_n(s) \Big| =& \,\Big| \sum_{|x_j^{(n)}| < A} \frac{1}{x_j^{(n)}} \prod_{\substack{|x_i^{(n)}| < A \\ i \neq j}} \Big(1 - \frac{s}{x_i^{(n)}}\Big)\Big| \\
\leq& \,X_{(-A,A)}^{(n)}\cdot \frac{1}{t} \Big(1 + \frac{R}{t}\Big)^{X^{(n)}_{(-A,A)}}.
\end{align*}
This is a crude bound, but it suffices for our purpose. We have,
\begin{equation}
\label{crude_bound}
\mathbb{P}\Big( \sup_{s\in K} \Big| \frac{d}{ds} \psi_n(s) \Big| \geq c\Big) \leq \mathbb{P}(X_{(-t,t)}^{(n)} \geq 1) + \mathbb{P}\Big(X_{(-A,A)}^{(n)}\cdot \frac{1}{t} \Big(1 + \frac{R}{t}\Big)^{X^{(n)}_{(-A,A)}} \geq c\Big).
\end{equation}
Using regularity in expectation (Definition \ref{def:amenable}, Condition \emph{(i)}) of the point processes $x^{(n)}$ and Markov's inequality, we see
$$
\mathbb{P}\Big(X_{(-A,A)}^{(n)}\cdot \frac{1}{t} \Big(1 + \frac{R}{t}\Big)^{X^{(n)}_{(-A,A)}} \geq c\Big) \rightarrow 0,
$$
uniformly in $n$ as $c\rightarrow\infty$. But as $x^{(n)} \rightarrow y$, using Markov's inequality as in the proof of Lemma \ref{pointwise_converge}
$$
\lim_{n \rightarrow\infty} \mathbb{P}(X_{(-t,t)}^{(n)} \geq 1) \leq 2t.
$$
Using these results in \eqref{crude_bound}, we see
$$
\lim_{c\rightarrow\infty} \limsup_{n\rightarrow\infty} \mathbb{P}\Big( \sup_{s\in K} \Big| \frac{d}{ds} \psi_n(s) \Big| \geq c\Big) \leq 2t.
$$
Because $t$ is arbitrary, \eqref{equi_proof1} follows and therefore the lemma as well.
\end{proof}

\subsection{Concluding the proof of Theorem \ref{general_pp_theorem}}

From these lemmas Theorem \ref{general_pp_theorem} follows straightforwardly. For, from Lemma \ref{pointwise_converge}, Lemma \ref{pointwise_to_uniform}, and Lemma \ref{psi_continuous}, for any $A > 0$,
$$
\prod_{|x_i^{(n)}| < A} \Big(1 - \frac{s}{x_i^{(n)}}\Big) \rightarrow \prod_{|y|<A} \Big(1- \frac{s}{y_i}\Big),
$$
in law in the topology of uniform convergence on compact sets. But from Lemmas \ref{product_truncation1} and \ref{product_truncation2}, for any compact set $K$, by taking $A$ sufficiently large we can respectively approximate $c_n(s)$ and $\xi_\infty(s)$ in probability by
$$
\prod_{|x_i^{(n)}| < A} \Big(1 - \frac{s}{x_i^{(n)}}\Big), \quad \textrm{and} \quad \prod_{|y|<A} \Big(1- \frac{s}{y_i}\Big),
$$
uniformly for $s\in K$. Theorem \ref{general_pp_theorem} then follows from a standard application of the Portmanteau theorem.

\section{An application to the classical compact groups}
\label{classicalcompact}

We turn to the proofs of Theorems \ref{thm:ortho} and \ref{thm:symp} regarding the orthogonal and symplectic groups. The key facts will be the (well-known) result that eigenangles of elements of these groups away from the origin tend to the Sine process once rescaled, and the observation that the rescaled eigenangles form a uniformly product-amenable point process.

For the orthogonal group, we discuss the proof of Theorem \ref{thm:ortho} in detail. For the symplectic group, Theorem \ref{thm:symp} is very similar and its proof we will only sketch.

\subsection{Some well-known facts about the special orthogonal group}

We recall relevant facts about the special orthogonal groups. $SO(2n)$ and $SO(2n+1)$ must be treated separately; we discuss $SO(2n)$ first. (A good reference for the material recalled in this subsection is \cite{Co}.) The $2n$ eigenvalues of a matrix $g \in SO(2n)$ can be written
\begin{equation}
\label{SO(2n)_eigenangles}
\{ e^{\pm i 2\pi \vartheta_1}, ..., e^{\pm i 2\pi \vartheta_n}\},
\end{equation}
with $\vartheta_j \in [0,\frac{1}{2}]$ for all $j$. For $g$ chosen randomly according to Haar measure, the points $\{\vartheta_1,...,\vartheta_n\}$ form a determinantal point process: we have for $\eta \in C([0,\frac{1}{2}]^k)$,
\begin{equation}
\label{SO(2n)_determinantal}
\mathbb{E} \sum_{\substack{j_1, ..., j_k \\  \mathrm{distinct}}} \eta(\vartheta_{j_1}, ..., \vartheta_{j_k}) = \int_{[0,\tfrac12]^k} \eta(x_1,...,x_k) \det_{1\leq i,j \leq k} \big[K_{SO(2n)}(x_i,x_j)\big]\, d^k x,
\end{equation}
with
\begin{equation}
\label{SO(2n)_determinantal2}
K_{SO(2n)}(x,y):= s_{2n-1}(x-y) + s_{2n-1}(x+y),
\end{equation}
where
$$
s_n(x):= \frac{\sin \pi n x}{\sin \pi x}.
$$ 
The kernel $K_{SO(2n)}(x,y)$ represents an orthogonal projection in $L^2[0,\frac{1}{2}]$; that is, we have
$$
\int_0^{\frac{1}{2}} K_{SO(2n)}(x,y) K_{SO(2n)}(y,z)\, dy = K_{SO(2n)}(x,z).
$$

$SO(2n+1)$ is similar. In this case the $2n+1$ eigenvalues of $g\in SO(2n+1)$ can be written
\begin{equation}
\label{SO(2n+1)_eigenvalues}
\{1,e^{\pm i 2\pi \vartheta_1}, ..., e^{\pm i 2\pi \vartheta_n}\}
\end{equation}
with $\vartheta_j \in [0,\frac{1}{2}]$ for all $j$. For $g$ chosen randomly according to Haar measure, again the points $\{\vartheta_1,...,\vartheta_n\}$ form a determinantal point process:
\begin{equation}
\label{SO(2n+1)_determinantal}
\mathbb{E} \sum_{\substack{j_1, ..., j_k \\  \mathrm{distinct}}} \eta(\vartheta_{j_1}, ..., \vartheta_{j_k}) = \int_{[0,\tfrac12]^k} \eta(x_1,...,x_k) \det_{1\leq i,j \leq k} \big[K_{SO(2n+1)}(x_i,x_j)\big]\, d^k x,
\end{equation}
with
\begin{equation}
\label{SO(2n+1)_determinantal2}
K_{SO(2n+1)}(x,y):= s_{2n}(x-y) - s_{2n}(x+y),
\end{equation}
which again is a kernel representing an orthogonal projection on $L^2[0,\frac{1}{2}]$.

\subsection{Eigenvalue counts \texorpdfstring{for $SO(n)$}{for SO(n)} }

We first show the following:

\begin{proposition}
\label{proposition:SO(2n)_EV_bounds1}
Let $I \subset [0,\frac{1}{2}]$ be an interval and choose $g \in SO(2n)$ randomly according to Haar measure. With the first $n$ eigenangles $\vartheta_j$ labeled by \eqref{SO(2n)_eigenangles}, define the random variable $W_I^{(2n)}:= \#\{ \vartheta_j \in I\}$. Then
\begin{equation}
\label{SO(2n)_exp_bound}
\mathbb{E}\, W^{(2n)}_I = 2n|I| + O(1),
\end{equation}
and
\begin{equation}
\label{SO(2n)_var_bound}
\Var\, W^{(2n)}_I \lesssim \log(2 + 2n|I|),
\end{equation}
uniformly for all $n$ over all such intervals $I$.
\end{proposition}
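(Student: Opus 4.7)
The plan is to fully exploit the determinantal structure of the eigenangle process, with its projection kernel $K_{SO(2n)}(x,y) = s_{2n-1}(x-y) + s_{2n-1}(x+y)$, and to reduce both estimates to standard bounds on the Dirichlet kernel $s_{2n-1}(u) = \sum_{|k|\leq n-1} e^{2\pi i k u}$.

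For the expectation, I would use $\mathbb{E}\, W_I^{(2n)} = \int_I K_{SO(2n)}(x,x)\, dx$, noting that $K_{SO(2n)}(x,x) = s_{2n-1}(0) + s_{2n-1}(2x) = (2n-1) + s_{2n-1}(2x)$. This yields
$$
\mathbb{E}\, W_I^{(2n)} = (2n-1)|I| + \tfrac{1}{2}\int_{2I} s_{2n-1}(u)\, du.
$$
Integrating the Dirichlet sum termwise reduces the remaining integral to a partial sum of the form $\frac{1}{\pi}\sum_{k=1}^{n-1}\frac{\sin(2\pi k b)-\sin(2\pi k a)}{k}$, which is uniformly $O(1)$ by the classical Abel/Dirichlet bound on partial Fourier sums of the sawtooth function. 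Hence $\mathbb{E}\, W_I^{(2n)} = (2n-1)|I| + O(1) = 2n|I| + O(1)$, since $|I| \leq 1/2$.

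For the variance, the projection property $\int_0^{1/2} K_{SO(2n)}(x,y) K_{SO(2n)}(y,z)\, dy = K_{SO(2n)}(x,z)$ implies in particular that $\int_0^{1/2} K_{SO(2n)}(x,y)^2\, dy = K_{SO(2n)}(x,x)$, giving the standard off-diagonal representation
$$
\operatorname{Var}\, W_I^{(2n)} = \int_I \int_{[0,1/2]\setminus I} K_{SO(2n)}(x,y)^2\, dx\, dy.
$$
Using $(a+b)^2 \leq 2(a^2+b^2)$, I would bound $K_{SO(2n)}(x,y)^2$ by $2s_{2n-1}(x-y)^2 + 2s_{2n-1}(x+y)^2$ and treat each piece. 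For the first piece, enlarging the complement from $[0,1/2]\setminus I$ to $[0,1]\setminus I$ (working on the circle $\mathbb{R}/\mathbb{Z}$) produces exactly the CUE$(2n-1)$ number variance on an arc of length $|I|$, which is $\lesssim \log(2+(2n-1)|I|)$ by a standard Parseval computation with $|\widehat{\mathbf{1}_I}(m)|^2 = \sin^2(\pi m|I|)/(\pi m)^2$. For the second piece, $1$-periodicity of $s_{2n-1}$ lets me substitute $y'= 1-y$ so that $s_{2n-1}(x+y)^2 = s_{2n-1}(x-y')^2$ with $y'$ lying in a subset of $[1/2,1]$ essentially disjoint from $I \subset [0,1/2]$; enlarging the $y'$-domain to $[0,1]\setminus I$ again yields the same CUE bound. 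Combining, $\operatorname{Var}\, W_I^{(2n)} \lesssim \log(2+2n|I|)$.

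The only real obstacle is the bookkeeping around the two components of $K_{SO(2n)}$: the $s_{2n-1}(x+y)$ term must be reduced to an $s_{2n-1}(x-y')^2$ integral by the periodicity substitution, and boundary contributions from intervals abutting $0$ or $1/2$ generate only $O(1)$ errors that are absorbed into both bounds.
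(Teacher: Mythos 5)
Your argument is correct, and its skeleton (the determinantal expectation/variance formulas, the identity $K_{SO(2n)}(x,x)=(2n-1)+s_{2n-1}(2x)$, and the splitting of $K_{SO(2n)}(x,y)^2$ into the $s_{2n-1}(x-y)^2$ and $s_{2n-1}(x+y)^2$ pieces) matches the paper's. Where you genuinely diverge is in how each piece of the variance is estimated: the paper works entirely in real variables, invoking the pointwise decay $s_N(t)=O\bigl(N/(1+N\|t\|)\bigr)$ and then computing the resulting double integrals $\int_I\int_{I^c}\frac{N^2}{1+N^2(x\mp y)^2}\,dx\,dy$ directly, whereas you fold both pieces into the CUE$(2n-1)$ number variance on the circle --- using nonnegativity of the integrand to enlarge domains, $1$-periodicity and the substitution $y'=1-y$ to convert $s_{2n-1}(x+y)^2$ into $s_{2n-1}(x-y')^2$ with $y'$ ranging (up to a measure-zero overlap at $1/2$) over a subset of $[0,1]\setminus I$ --- and then quote the Parseval bound $\sum_m \min(|m|,N)\,\sin^2(\pi m|I|)/(\pi m)^2\lesssim\log(2+N|I|)$. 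Your route is cleaner in that it isolates a single well-known input (the unitary-group number variance, essentially Lemma 7 of the Meckes--Meckes reference the paper itself cites) and handles both kernel pieces by one domain-monotonicity observation; the paper's route is more self-contained, needing only the elementary Dirichlet-kernel decay. For the expectation the two arguments differ only cosmetically: you integrate the Dirichlet sum termwise and appeal to the uniform boundedness of the partial sums $\sum_{k\le n-1}\sin(k\theta)/k$, while the paper bounds $\int_I s_{2n-1}(2x)\,dx$ by the integral over the central lobe; both give the same $O(1)$. (Minor bookkeeping: after the substitution $u=2x$ your oscillatory sums should carry $\sin(4\pi k b)-\sin(4\pi k a)$ rather than $\sin(2\pi k b)-\sin(2\pi k a)$, and the $k=0$ term contributes an extra $|I|\le\tfrac12$; neither affects the uniform $O(1)$ conclusion.)
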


\textbf{Remark:} The estimate \eqref{SO(2n)_exp_bound} is known and appears as part of Proposition 7 of \cite{MeMe}. The proof is short and so we reproduce it here. A uniform bound  of the sort \eqref{SO(2n)_var_bound} does not seem to be recorded in the literature, but closely related ideas are well known. A similar estimate for the unitary group is, for instance, proved in Lemma 7 of \cite{MeMe2}.

In our proof we make use of the following well-known computational lemma, which appears as Lemma 6 of \cite{MeMe}; the ideas behind the proof are explained in e.g. Appendix B of \cite{Gu}.

\begin{lemma}
\label{lemma:determinantal_exp_and_var}
Let $x$ be a determinantal point process with points lying entirely in an interval $J$, \footnote{$J$ may be $\mathbb{R}$.} with continuous kernel $K: \, J \times J \rightarrow \mathbb{R}$ representing an orthogonal projection on $L^2(J)$. For a subinterval $I \subseteq J$, let $X_I$ be the number of points of the process $x$ that lie in $I$. Then
$$
\mathbb{E}\, X_I = \int_I K(x,x)\, dx,
$$
$$
\Var\, X_I = \int_I \int_{J \setminus I} K(x,y)^2\, dxdy.
$$
\end{lemma}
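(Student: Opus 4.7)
The plan is to derive both formulas directly from the definition of a determinantal point process via correlation functions, together with the projection identity $K^2 = K$. Since $K$ is a self-adjoint projection on $L^2(J)$ with real-valued continuous kernel, we have $K(x,y) = K(y,x)$ and the reproducing identity
\begin{equation*}
K(x,x) \,=\, \int_J K(x,y) K(y,x)\, dy \,=\, \int_J K(x,y)^2\, dy,
\end{equation*}
which will be the key analytic input.

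For the expectation, I would start from the $k=1$ correlation function, which by the determinantal property of $x$ has density $\rho_1(x) = K(x,x)$. Applying this to $\eta = \mathbf{1}_I$ in the defining identity \eqref{SO(2n)_determinantal} (in its general form for an arbitrary determinantal process on $J$) gives
\begin{equation*}
\mathbb{E}\, X_I \,=\, \mathbb{E}\sum_{i} \mathbf{1}_I(x_i) \,=\, \int_I K(x,x)\, dx,
\end{equation*}
which is the first claim.

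For the variance, the standard move is to compute the factorial moment $\mathbb{E}\, X_I(X_I - 1)$ via the two-point correlation function. Applying the determinantal formula with $\eta(u,v) = \mathbf{1}_I(u)\mathbf{1}_I(v)$ gives
\begin{equation*}
\mathbb{E}\, X_I(X_I-1) \,=\, \int_I\!\int_I \bigl( K(x,x)K(y,y) - K(x,y)^2 \bigr)\, dx\, dy,
\end{equation*}
using the symmetry $K(x,y)=K(y,x)$ to evaluate the $2 \times 2$ determinant. Adding $\mathbb{E}\, X_I$ and subtracting $(\mathbb{E}\, X_I)^2 = \int_I\!\int_I K(x,x) K(y,y)\, dx\, dy$ yields
\begin{equation*}
\Var\, X_I \,=\, \int_I K(x,x)\, dx \,-\, \int_I\!\int_I K(x,y)^2\, dx\, dy.
\end{equation*}

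At this point the reproducing identity above finishes the argument: substituting $K(x,x) = \int_J K(x,y)^2\, dy$ in the first term, we obtain
\begin{equation*}
\Var\, X_I \,=\, \int_I\!\int_J K(x,y)^2\, dy\, dx \,-\, \int_I\!\int_I K(x,y)^2\, dy\, dx \,=\, \int_I\!\int_{J\setminus I} K(x,y)^2\, dx\, dy,
\end{equation*}
which is the second claim. There is no real obstacle here; the only point that requires care is justifying the factorial-moment computation from the determinantal correlation functions (which is standard, as referenced in \cite{Gu}), and using self-adjointness plus the projection property $K \circ K = K$ to convert the diagonal integral into a double integral over $I \times J$.
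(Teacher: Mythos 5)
Your proof is correct, and it is precisely the standard argument that the paper itself only cites (Lemma 6 of \cite{MeMe}, with the ideas in Appendix B of \cite{Gu}) rather than proving: compute the first and second factorial moments from the one- and two-point correlation functions, then use symmetry and the reproducing identity $K(x,x)=\int_J K(x,y)^2\,dy$ of the projection kernel to convert $\int_I K(x,x)\,dx - \int_I\int_I K(x,y)^2\,dx\,dy$ into the off-diagonal form. No gaps; the only implicit hypothesis you rely on (as does the lemma in its intended use) is that $I$ is bounded so that all the integrals and moments involved are finite.
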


We are now in a position to prove our lemma.
\begin{proof}[Proof of Proposition \ref{proposition:SO(2n)_EV_bounds1}]
We treat the bound for expectation first. Note that by applying Lemma \ref{lemma:determinantal_exp_and_var},
\begin{equation}
\label{W_exp}
\mathbb{E}\, W^{(2n)}_I = \int_I K_{SO(2n)}(x,x)\, dx = \int_I [ (2n-1)+ s_{2n-1}(2x)]\, dx.
\end{equation}
Yet recalling the definition of $s_{2n-1}(2x)$ and from inspection of this Dirichlet kernel, for any interval $I$,
$$
\bigg| \int_{I} \frac{\sin 2\pi (2n-1) x}{\sin 2\pi x}\, dx \bigg| \leq \int_{-\tfrac{1}{2(2n-1)}}^{\tfrac{1}{2(2n-1)}}  \frac{\sin 2\pi (2n-1) x}{\sin 2\pi x}\, dx \leq 1,
$$
so \eqref{W_exp} is
$$
(2n-1) |I| + O(1) = 2n|I|+O(1).
$$

Turning to variance, we have from Lemma \ref{lemma:determinantal_exp_and_var},
\begin{equation}
\label{var_bound_1}
\Var\, W^{(2n)}_I = \int_{[0,\frac{1}{2}]\setminus I} \int_I K_{SO(2n)}(x,y)^2\, dx dy \lesssim \int_{[0,\frac{1}{2}]\setminus I} \int_I s_{2n-1}(x-y)^2 + s_{2n-1}(x+y)^2\, dx dy,
\end{equation}
with the upper bound following straightforwardly from the definition of $K_{SO(2n)}$. For typographical reasons we let $N = 2n-1$. Recall the well-known bound for the Dirichlet kernel:
\begin{equation}
\label{Dirichlet_bound}
s_N(t) = O\Big(\frac{N}{1+ N \|t\|}\Big),
\end{equation}
where $\|t\|$ denotes the distance of $t$ to the nearest integer. Hence for $t \in (-\frac{1}{2},\frac{1}{2})$,
\begin{equation}
\label{Dirichlet_(-1/2,1/2)}
s_N(t)^2 \lesssim \frac{N^2}{1+ N^2 t^2},
\end{equation}
while for $t \in (0,1)$,
\begin{equation}
\label{Dirichlet_(0,1)}
s_N(t)^2 \lesssim \frac{N^2}{1+ N^2 t^2} + \frac{N^2}{1+N^2 (1-t)^2}.
\end{equation}
Hence from \eqref{Dirichlet_(-1/2,1/2)},
\begin{align}
\label{s_n_var_bound}
 \int_{[0,\frac{1}{2}]\setminus I} \int_I s_{N}(x-y)^2 \, dx dy &\leq  \int_{\mathbb{R}\setminus I} \int_I \frac{N^2}{1+N^2(x-y)^2} \, dx dy  \\ \notag &= \bigg( \int_{-\infty}^0 + \int_{|I|}^\infty\bigg) \bigg( \int_0^{|I|} \frac{N^2}{1+N^2(x-y)^2}\, dx\bigg) dy \\
\notag &= \bigg( \int_{-\infty}^0 + \int_{N|I|}^\infty\bigg) \bigg( \int_0^{N|I|} \frac{1}{1+(x-y)^2}\, dx\bigg) dy
\end{align}
with the second line following from translating $x$ and $y$ so that the starting point of the interval begins at the origin and the third line following from making a change of variable. Swapping the order of integration, with a short computation we bound the above by
$$
\lesssim \int_0^{N|I|} \frac{1}{1+x} + \frac{1}{1+ (N|I|-x)}\, dx \lesssim \log(2+ N|I|).
$$

On the other hand, using \eqref{Dirichlet_(0,1)},
\begin{multline*}
\int_{[0,\frac{1}{2}]\setminus I} \int_I s_{2n-1}(x+y)^2 \, dx dy \leq \int_{[0,\frac{1}{2}]} \int_I s_{2n-1}(x+y)^2 \, dx dy \\
\lesssim \int_{[0,\frac{1}{2}]} \int_I \frac{N^2}{1+N^2(x+y)^2} \, dx dy + \int_{[0,\frac{1}{2}]} \int_I \frac{N^2}{1+N^2(1-x-y)^2}\, dx dy.
\end{multline*}
Supposing that $I = [A,B]$, and making a change of variable,
\begin{align*}
\int_{[0,\frac{1}{2}]} \int_I \frac{N^2}{1+N^2(x+y)^2} \, dx dy &= \int_0^{N/2} \int_{NA}^{NB} \frac{1}{1+(x+y)^2}\, dx dy \\
&\lesssim \int_{NA}^{NB} \frac{1}{1+y}\, dy \lesssim \log\big(2+ N(B-A)\big) = \log(2+ N|I|).
\end{align*}
In the same way, making the change of variable $x \mapsto \frac{1}{2}-x$, $y \mapsto \frac{1}{2}-y$ and using the same computation,
$$
\int_{[0,\frac{1}{2}]} \int_I \frac{N^2}{1+N^2(1-x-y)^2}\, dx dy \lesssim \log(2+N|I|).
$$
Collecting these bounds and returning to \eqref{var_bound_1}, we see that 
$$
\Var\, W_I^{(2n)} \lesssim \log(2+N|I|),
$$
which verifies \eqref{SO(2n)_var_bound}.
\end{proof}

From this and symmetry it is easy to see a slightly more general result:
\begin{proposition}
\label{proposition:SO(2n)_EV_bounds2}
Let $I \subset [-\frac{1}{2},\frac{1}{2}]$ be an interval, and choose $g \in SO(2n)$ randomly according to Haar measure. For eigenangles $\vartheta_j$ labelled by \eqref{SO(2n)_eigenangles} as before, define the random variable $W_I^{(2n)} := \#\{ \vartheta_j \in I\}+ \#\{-\vartheta_j \in I\}$, which counts eigenangles in $I$. Then
$$
\mathbb{E}\, W_I^{(2n)} = 2n|I| + O(1),
$$
$$
\Var\, W_I^{(2n)} \lesssim \log(2+ 2n|I|),
$$
uniformly for all $n$ over all such intervals $I$.
\end{proposition}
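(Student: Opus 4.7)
The plan is to reduce Proposition \ref{proposition:SO(2n)_EV_bounds2} to Proposition \ref{proposition:SO(2n)_EV_bounds1} by splitting the interval $I \subset [-\tfrac{1}{2}, \tfrac{1}{2}]$ at the origin. Namely, set $I^+ := I \cap [0,\tfrac{1}{2}]$ and $I^- := I \cap [-\tfrac{1}{2}, 0]$, both of which are intervals (possibly empty), with $|I^+| + |I^-| = |I|$. Since the underlying eigenangles $\vartheta_j$ live in $[0,\tfrac{1}{2}]$, we have $\#\{\vartheta_j \in I\} = \#\{\vartheta_j \in I^+\}$, while $\#\{-\vartheta_j \in I\} = \#\{\vartheta_j \in -I^-\}$, and $-I^-$ is an interval in $[0,\tfrac{1}{2}]$. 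Thus
\[
W_I^{(2n)} \;=\; W^{(2n)}_{I^+,\,\mathrm{old}} \;+\; W^{(2n)}_{-I^-,\,\mathrm{old}},
\]
where $W^{(2n)}_{J,\,\mathrm{old}} := \#\{\vartheta_j \in J\}$ is the counting variable of Proposition \ref{proposition:SO(2n)_EV_bounds1}.

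The expectation estimate is then immediate from linearity: applying Proposition \ref{proposition:SO(2n)_EV_bounds1} to each of $I^+$ and $-I^-$ gives
\[
\mathbb{E}\, W_I^{(2n)} \;=\; 2n|I^+| + O(1) + 2n|{-I^-}| + O(1) \;=\; 2n|I| + O(1),
\]
uniformly in $n$ and in $I$.

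For the variance, I would use the elementary inequality $\Var(X+Y) \leq 2\Var(X) + 2\Var(Y)$ (a consequence of Cauchy--Schwarz), combined with Proposition \ref{proposition:SO(2n)_EV_bounds1} to obtain
\[
\Var\, W_I^{(2n)} \;\lesssim\; \log\bigl(2 + 2n|I^+|\bigr) + \log\bigl(2 + 2n|{-I^-}|\bigr) \;\lesssim\; \log\bigl(2 + 2n|I|\bigr),
\]
where the last step uses that both $|I^+|$ and $|{-I^-}|$ are at most $|I|$, so each logarithm is bounded by $\log(2 + 2n|I|)$.

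There is no substantive obstacle here; this is essentially bookkeeping. The only mild subtlety is to observe that the decomposition $I = I^+ \cup I^-$ may double-count the point $\{0\}$, but this contributes nothing since the determinantal point process has no atoms, so $\{\vartheta_j\}$ contains $0$ with probability zero. Once this is noted, the argument proceeds in one line per bound.
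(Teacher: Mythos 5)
Your proof is correct and is exactly the argument the paper has in mind when it asserts the proposition ``from this and symmetry''; you simply spell out the bookkeeping (splitting $I$ into $I^+$ and $-I^-$, linearity of expectation, and $\Var(X+Y)\le 2\Var X + 2\Var Y$, plus the almost-sure-no-atom remark) that the paper leaves implicit.
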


With a virtually identical proof, using \eqref{SO(2n+1)_determinantal} and \eqref{SO(2n+1)_determinantal2}, we obtain the same estimate for eigenangles of $SO(2n+1)$. To summarize this information, we introduce one last labeling of the eigenangles of $g\in SO(n)$ (for $n$ even or odd). We write the eigenvales of $g$ as
\begin{equation}
\label{SO(n)_eigenangles}
\{e^{i2\pi \theta_1},...., e^{i2\pi \theta_n}\}
\end{equation}
with $\theta_j \in [-\frac{1}{2},\frac{1}{2})$ for all $j$. We have

\begin{proposition}
\label{proposition:SO(n)_EV_bounds}
Let $I \subset [-\frac{1}{2},\frac{1}{2})$ and choose $g \in SO(n)$ randomly according to Haar measure. For $\theta_j$ as in \eqref{SO(n)_eigenangles}, define the random variable $U_I^{(n)}:= \#\{ \theta_j \in I\}$, which counts the number of eigenangles in $I$. Then
$$
\mathbb{E}\, U_I^{(n)} = n|I| + O(1),
$$
$$
\Var\, U_I^{(n)} \lesssim \log(2+ n|I|).
$$
\end{proposition}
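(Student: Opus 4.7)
The plan is to reduce Proposition \ref{proposition:SO(n)_EV_bounds} to the already-established estimates of Proposition \ref{proposition:SO(2n)_EV_bounds2} (for even $n$) and to its stated analogue for $SO(2n+1)$. The only real content is to match up the two different labelings of the eigenangles: in Proposition \ref{proposition:SO(n)_EV_bounds}, the $\theta_j \in [-\tfrac12,\tfrac12)$ run over the full list of $n$ eigenangles, whereas in \eqref{SO(2n)_eigenangles} and \eqref{SO(2n+1)_eigenvalues} the $\vartheta_j \in [0,\tfrac12]$ come in $\pm$ pairs (with an extra forced eigenvalue $1$ when $n$ is odd).

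First I would split into parity cases. For $n=2m$, one observes that almost surely $\{\theta_1,\ldots,\theta_{2m}\} = \{\pm\vartheta_k : 1 \leq k \leq m\}$, so $U_I^{(2m)} = W_I^{(2m)}$ in the notation of Proposition \ref{proposition:SO(2n)_EV_bounds2}, and the estimates transfer directly (noting $n|I| = 2m|I|$). For $n = 2m+1$, one has $U_I^{(2m+1)} = \mathds{1}_{\{0\in I\}} + W_I^{(2m+1)}$, with $W_I^{(2m+1)}$ defined analogously. The $SO(2m+1)$ analogue of Proposition \ref{proposition:SO(2n)_EV_bounds2} then supplies $\mathbb{E}\, W_I^{(2m+1)} = 2m|I| + O(1)$ and $\Var\, W_I^{(2m+1)} \lesssim \log(2+2m|I|)$; adding the deterministic indicator does not affect the variance, and since $|I| \leq 1$ the discrepancy between $2m|I|$ and $n|I|$ disappears into the $O(1)$ term in the expectation.

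There is no genuine obstacle to the reduction. The only point that requires verification is the ``virtually identical proof'' for $SO(2m+1)$ hinted at in the text: the kernel $K_{SO(2n+1)}(x,y) = s_{2n}(x-y) - s_{2n}(x+y)$ has essentially the same structure as $K_{SO(2n)}$, so the Dirichlet-kernel bounds \eqref{Dirichlet_(-1/2,1/2)} and \eqref{Dirichlet_(0,1)} apply verbatim with $N = 2n$ in place of $N = 2n-1$, and the expectation and variance computations of Proposition \ref{proposition:SO(2n)_EV_bounds1} go through without modification. Once this has been checked, the two parity cases combine to yield the proposition.
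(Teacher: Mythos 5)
Your reduction is correct and is essentially the paper's own argument: Proposition \ref{proposition:SO(n)_EV_bounds} is stated there as an immediate consequence of Proposition \ref{proposition:SO(2n)_EV_bounds2} and its ``virtually identical'' $SO(2n+1)$ analogue, exactly as you do. You simply make explicit the bookkeeping the paper leaves implicit (the a.s.\ identification of the two labelings, the extra eigenvalue at $1$ for odd $n$, and the fact that the kernel $s_{2n}(x-y)-s_{2n}(x+y)$ admits the same Dirichlet-kernel bounds), all of which checks out.
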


\subsection{The limiting characteristic polynomial \texorpdfstring{for $\xi_n^{SO}(s)$}{for the orthogonal group} }

We finally turn to a proof of Theorem \ref{thm:ortho}. We must recall another well known fact.

\begin{proposition}
\label{proposition_SO(n)_tends_to_sine}
Fix nonzero $E \in (-\frac{1}{2},\frac{1}{2})$. Define the point process $\tilde{x}_n$ by the point configurations
$$
\{\tilde{x}^{(n)}_j\}_{1\leq j \leq n} = \{n (\theta_j-E)\}_{1\leq j \leq n},
$$
with $\{e^{i2\pi \theta_1},...,e^{i 2\pi \theta_n} \}$ the eigenvalues of $g \in SO(n)$ chosen randomly according to Haar measure, with $\theta_j \in [-\frac{1}{2},\frac{1}{2})$ for all $j$. In law the point processes $\tilde{x}^{(n)}$ tends to the Sine process.
\end{proposition}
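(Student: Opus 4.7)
The plan is to establish convergence via the determinantal structure of the eigenangle process and the explicit kernels $K_{SO(2m)}$, $K_{SO(2m+1)}$ recorded in the previous subsection. Since the law of the eigenangles of $g \in SO(n)$ is invariant under complex conjugation, I may assume $E > 0$ throughout.

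The first step reduces the problem to the process $\{\vartheta_j\}$ on $[0,\tfrac12]$. Because $\theta_j = \pm \vartheta_j$ (plus, in the odd case, a harmless deterministic point at $\theta = 0$), for any fixed compact window $K_0 \subset \mathbb{R}$ the ``reflected'' rescaled points $n(-\vartheta_j - E)$ all lie below $-nE$, hence are eventually disjoint from $K_0$. Therefore the restriction of $\tilde{x}^{(n)}$ to $K_0$ agrees almost surely (for $n$ large) with the determinantal point process obtained by rescaling $\{\vartheta_j\}$ around $E$, whose correlation kernel is
$$
K_n^{\mathrm{resc}}(u,v) := \frac{1}{n}\, K_{SO(n)}\Big(E + \tfrac{u}{n},\; E + \tfrac{v}{n}\Big).
$$

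The second step is the kernel asymptotics. Writing $N := n-1$ and decomposing $K_{SO(n)}(x,y) = s_N(x-y) \pm s_N(x+y)$ (the sign depending on the parity of $n$), the translation-invariant piece
$$
\frac{1}{n}\, s_N\Big(\tfrac{u-v}{n}\Big) = \frac{1}{n}\cdot \frac{\sin \pi N(u-v)/n}{\sin \pi (u-v)/n}
$$
tends locally uniformly to the sine kernel $\frac{\sin \pi(u-v)}{\pi(u-v)}$. The reflection piece
$$
\frac{1}{n}\, s_N\Big(2E + \tfrac{u+v}{n}\Big)
$$
is $O(1/n)$ uniformly on compact sets, because its denominator $\sin \pi(2E + o(1))$ stays bounded away from $0$ thanks to the hypotheses $E \neq 0$ and $E \in (-\tfrac12, \tfrac12)$, which together ensure $2E \notin \mathbb{Z}$. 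Hence $K_n^{\mathrm{resc}}$ converges to the sine kernel locally uniformly on $\mathbb{R} \times \mathbb{R}$.

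Finally, for determinantal point processes with uniformly locally bounded correlation kernels, locally uniform convergence of kernels implies convergence of every correlation function, which in turn gives convergence in law of the point processes (see e.g.\ Hough--Krishnapur--Peres--Virág). Combined with the localization in the first step, this yields $\tilde{x}^{(n)} \to y$ in law. The main---and essentially only---obstacle is controlling the reflection term: the hypothesis $E \neq 0$ is critical here, since at $E=0$ the reflection term survives and the limit is instead a symmetrized variant of the sine process, rather than the sine process itself.
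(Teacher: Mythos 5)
Your proof is correct and follows essentially the same route as the paper's: reduce to $E>0$ by symmetry, compute that the rescaled determinantal kernel converges locally uniformly to the sine kernel (the translation-invariant Dirichlet piece producing $\frac{\sin\pi(u-v)}{\pi(u-v)}$ and the reflection piece being $O(1/n)$ since $2E\notin\mathbb{Z}$), and then invoke a general convergence result for determinantal processes (you cite Hough--Krishnapur--Peres--Vir\'ag; the paper cites Lemma~4.2.48 of \cite{AGZ10}, together with the remark that the kernels are orthogonal projections so have spectrum in $[0,1]$). You are in fact a bit more explicit than the paper in the localization step, spelling out why the reflected points $n(-\vartheta_j - E)$ and the fixed eigenvalue at $\theta=0$ (odd case) eventually leave any compact window.
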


Note that the point process $\tilde{x}^{(n)}$ consist of the eigenangles of $g$ centered around $E$ and stretched out by a factor of $n$.

\begin{proof} 
For $E\in(0,\frac{1}{2})$ this follows from taking a limit of the explicit expression for correlation functions of $SO(2n)$ and $SO(2n+1)$, given in \eqref{SO(2n)_determinantal} and \eqref{SO(2n+1)_determinantal}. More precisely, the 
point process $\tilde{x}^{(n)}$ is determinantal and a direct computation implies that its kernel converges towards the sine-kernel, 
uniformly on compact sets. Now, since the kernels are associated to operators of orthogonal projections and then are "good kernels" (the eigenvalues of the corresponding operators are between $0$ and $1$), we can apply Lemma 4.2.48 of \cite{AGZ10} in order to conclude. The proof is similar for $E \in (-\frac{1}{2},0)$. 
\end{proof}

We finally can turn to a proof of the theorem at hand.

\begin{proof}[Proof of Theorem \ref{thm:ortho}] 
We note that for $\xi_n^{SO}(s)$ defined as in the theorem, and $\{e^{i2\pi \theta_1}, ..., e^{i2\pi \theta_n}\}$ the eigenvalues of $g \in SO(n)$ with $\theta_j \in [-\frac{1}{2},\frac{1}{2})$ for all $j$ as above,
\begin{align}
\label{SO_product}
\notag \xi_n^{SO}(s) =& \prod_{j=1}^n \frac{e^{i 2\pi(E+s/n)} - e^{i 2\pi \theta_j}}{e^{i 2\pi E} - e^{i 2\pi \theta_j}}, \\
\notag =& e^{i \pi s} \prod_{j=1}^n \frac{\sin(\pi( \theta_j - E - s/n))}{\sin(\pi(\theta_j - E))} \\
\notag =& e^{i \pi s} \lim_{V \rightarrow\infty} \prod_{j=1}^n \prod_{\nu = -V}^V \frac{\theta_j - E - s/n + \nu}{\theta_j - E + \nu} \\
=& e^{i \pi s} \lim_{B \rightarrow \infty} \prod_{|x_i^{(n)}| < B} \Big(1 - \frac{s}{x_i^{(n)}}\Big),
\end{align}
with the point processes $x^{(n)}$ defined by 
\begin{equation}
\label{x_SO(n)_def}
\{x_i^{(n)}\}_{i\in \mathbb{Z}} = \{ n(\theta_j - E + \nu)\}_{1 \leq j \leq n,\, \nu \in \mathbb{Z}}. 
\end{equation}
(This point process is the \emph{pull-back} of eigenangles of $g$ from $\mathbb{R}/\mathbb{Z}$ to $\mathbb{R}$, centered at $E$ and stretched out by a factor of $n$.)

We apply Theorem \ref{general_pp_theorem}. Clearly by applying Proposition \ref{proposition_SO(n)_tends_to_sine}, the process $x^{(n)}$ tends to the Sine process along with $\tilde{x}^{(n)}$. To prove Theorem \ref{thm:ortho} then we need only demonstrate that the processes $x^{(n)}$ are uniformly product-amenable. It is plain from the integer translations $\nu$ in the definition \eqref{x_SO(n)_def} that the condition \eqref{converging_sums} regarding the convergence of sums is true. Moreover, from examining Proposition \ref{proposition:SO(n)_EV_bounds}, it is clear that for the random variable $X_I^{(n)}:= \#\{x_i^{(n)} \in I\}$, where $I$ is any interval,
$$
\mathbb{E}\, X_I^{(n)} = |I| + O(1),
$$
and
$$
\Var X_I^{(n)} \lesssim \log(2+ |I|),
$$
(In fact, if $|I| \geq n$, then the variance will be substantially smaller than this.) This implies \emph{(i)} symmetry and \emph{(ii)} regularity in expectation, and likewise \emph{(iii)} regularity in variance of the processes $x^{(n)}$, and so the product representation \eqref{SO_product} implies the theorem.
\end{proof}

\subsection{The limiting characteristic polynomial \texorpdfstring{for $\xi_n^{Sp}(s)$}{for the symplectic group}}

A proof of Theorem \ref{thm:symp} is very similar. We outline the relevant facts:

Since the symplectic group is defined only for even orders, we may consider $g \in Sp(2n)$. The $2n$ eigenvalues may be labeled
\begin{equation}
\label{Sp_ev_label1}
\{e^{\pm i 2\pi \vartheta_1}, ..., e^{\pm i 2\pi \vartheta_n}\},
\end{equation}
with $\vartheta_j \in [0,\frac{1}{2}]$ for all $j$. For $g$ chosen randomly according to Haar measure, the points $\{\vartheta_1, ..., \vartheta_j\}$ form a determinantal point process: for $\eta \in C([0,\frac{1}{2}]^k)$,
\begin{equation}
\label{Sp(2n)_determinantal}
\mathbb{E} \sum_{\substack{j_1, ..., j_k \\  \mathrm{distinct}}} \eta(\vartheta_{j_1}, ..., \vartheta_{j_k}) = \int_{[0,\tfrac12]^k} \eta(x_1,...,x_k) \det_{1\leq i,j \leq k} \big[K_{Sp(2n)}(x_i,x_j)\big]\, d^k x,
\end{equation}
with
\begin{equation}
\label{Sp(2n)_determinantal2}
K_{Sp(2n)}(x,y):= s_{2n+1}(x-y) - s_{2n-1}(x+y),
\end{equation}
with $s_n(x) := \sin \pi n x / \sin \pi x$, as before. $K_{Sp(2n)}$ represents an orthogonal projection on $L^2[0,\frac{1}{2}]$ and we can prove analogues of Propositions \ref{proposition:SO(2n)_EV_bounds1} - \ref{proposition_SO(n)_tends_to_sine} in an identical fashion. We record the most important of these, reverting to the labeling
\begin{equation}
\label{Sp(n)_eigenangles}
\{ e^{i2\pi \theta_1},...,e^{i 2 \pi \theta_n}\}
\end{equation}
for all eigenvalues of $g \in Sp(n)$ (with $n$ even), with $\theta_j \in [-\frac{1}{2},\frac{1}{2})$. We have

\begin{proposition}
\label{proposition:Sp(n)_EV_bounds}
Let $n$ be even, $I \subset [-\frac{1}{2},\frac{1}{2})$, and choose $g \in Sp(n)$ randomly according to Haar measure. For $\theta_j$ as in \eqref{Sp(n)_eigenangles}, define the random variable $U_I^{(n)}:= \#\{ \theta_j \in I\}$, which counts the number of eigenangles in $I$. Then
$$
\mathbb{E}\, U_I^{(n)} = n|I| + O(1),
$$
$$
\Var\, U_I^{(n)} \lesssim \log(2+ n|I|).
$$
\end{proposition}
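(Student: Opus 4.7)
The plan is to mirror the proof of Propositions \ref{proposition:SO(2n)_EV_bounds1} and \ref{proposition:SO(n)_EV_bounds}, using the symplectic kernel \eqref{Sp(2n)_determinantal2} in place of the orthogonal kernel. Writing $n = 2m$ (since $Sp(n)$ is only defined for even $n$), I would first establish the analogous bounds for the $\vartheta_j$ process on $[0, 1/2]$: namely, for $W_I^{(2m)} := \#\{\vartheta_j \in I\}$ with $I \subset [0, 1/2]$,
$$
\mathbb{E}\, W_I^{(2m)} = 2m |I| + O(1), \qquad \Var\, W_I^{(2m)} \lesssim \log(2 + 2m|I|).
$$
The conclusion for $U_I^{(n)}$ then follows from the symmetry $\{\theta_j\} = \{\pm \vartheta_j\}$, exactly as Proposition \ref{proposition:SO(n)_EV_bounds} was deduced from Proposition \ref{proposition:SO(2n)_EV_bounds1} in the orthogonal case.

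For the expectation, Lemma \ref{lemma:determinantal_exp_and_var} gives
$$
\mathbb{E}\, W_I^{(2m)} = \int_I K_{Sp(2m)}(x,x)\, dx = \int_I \bigl[(2m+1) - s_{2m-1}(2x)\bigr]\, dx,
$$
using $s_{2m+1}(0) = 2m+1$. The integral of $s_{2m-1}(2x)$ over any subinterval of $[0, 1/2]$ is $O(1)$ by the same Dirichlet-integral estimate as in Proposition \ref{proposition:SO(2n)_EV_bounds1} (control of the central lobe plus cancellation of the alternating tails), so the expectation bound follows.

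For the variance, Lemma \ref{lemma:determinantal_exp_and_var} again yields
$$
\Var\, W_I^{(2m)} = \int_I \int_{[0,1/2]\setminus I} K_{Sp(2m)}(x,y)^2\, dx\, dy \lesssim \int_I \int_{[0,1/2]\setminus I} \bigl(s_{2m+1}(x-y)^2 + s_{2m-1}(x+y)^2\bigr)\, dx\, dy.
$$
I would estimate each term exactly as in the orthogonal case: apply the pointwise Dirichlet bound $s_N(t)^2 \lesssim N^2 / (1 + N^2 \|t\|^2)$, make a change of variables to stretch $I$ to $[0, N|I|]$, and compute the resulting double integral, getting $\lesssim \log(2 + 2m|I|)$ for the $(x-y)$ piece and, via the further change of variable $x \mapsto \tfrac12 - x$, $y \mapsto \tfrac12 - y$, the same bound for the $(x+y)$ piece.

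There is no genuine obstacle here: the only structural difference from the orthogonal calculation is the minus sign in $K_{Sp(2m)} = s_{2m+1}(x-y) - s_{2m-1}(x+y)$, which is harmless because the expectation integral is over the diagonal (so the sign is absorbed in the $O(1)$ term) and the variance is bounded by $|K|^2 \lesssim s_{2m+1}(x-y)^2 + s_{2m-1}(x+y)^2$ regardless of the sign. The translation from $W$ to $U$ via the $\pm \vartheta_j$ decomposition is routine since both summands are counted in disjoint regions of $[0, 1/2]$, and variances of the two pieces aggregate up to a cross-covariance term which is likewise controlled by the same kernel estimates.
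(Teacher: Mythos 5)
Your proposal is correct and follows exactly the route the paper intends: the paper itself only says the symplectic bounds are proved "in an identical fashion" to Propositions \ref{proposition:SO(2n)_EV_bounds1}--\ref{proposition:SO(n)_EV_bounds}, and you have spelled out the right adaptations (diagonal value $K_{Sp(2m)}(x,x) = (2m+1) - s_{2m-1}(2x)$, the harmlessness of the sign via $(a-b)^2 \lesssim a^2 + b^2$, the Dirichlet-kernel double-integral estimate, and the $\pm\vartheta_j$ reduction from $W$ to $U$ with Cauchy--Schwarz for the cross term). No issues.
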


\begin{proposition}
\label{proposition_Sp(n)_tends_to_sine}
Fix nonzero $E \in (-\frac{1}{2},\frac{1}{2})$. Define the point process $\tilde{x}_n$ by the point configurations
$$
\{\tilde{x}^{(n)}_j\}_{1\leq j \leq n} = \{n (\theta_j-E)\}_{1\leq j \leq n},
$$
with $\theta_j$ as in \eqref{Sp(n)_eigenangles} the eigenvalues of $g \in Sp(n)$ chosen randomly according to Haar measure. In law the point processes $\tilde{x}^{(n)}$ tend to the Sine process.
\end{proposition}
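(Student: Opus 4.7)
The plan is to follow the strategy used for Proposition \ref{proposition_SO(n)_tends_to_sine}: identify $\tilde{x}^{(n)}$ as a determinantal process, verify uniform convergence of its rescaled correlation kernel to the sine kernel $\sin(\pi(u-v))/(\pi(u-v))$ on compact sets, and then invoke Lemma 4.2.48 of \cite{AGZ10} to promote this to weak convergence of point processes.

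The first step is to write the kernel explicitly. Assume for concreteness $E \in (0, \tfrac12)$; the case $E \in (-\tfrac12, 0)$ follows from the symmetry $\vartheta_j \leftrightarrow -\vartheta_j$ (which sends the neighborhood of $E$ to the neighborhood of $-E$). Since $|E|>0$ is fixed, for any compact window $[-M,M]$ in the rescaled variable $u = n(\theta-E)$, only the eigenangles $\vartheta_j$ within $O(1/n)$ of $E$ can contribute for large $n$, as their mirror images $-\vartheta_j$ lie at distance at least $nE \to \infty$ from the window. Specializing \eqref{Sp(2n)_determinantal2} (replacing $2n$ by $n$), the restriction of $\tilde{x}^{(n)}$ to $[-M,M]$ is determinantal with kernel
\[
K^{(E,n)}(u,v) := \frac{1}{n}\, K_{Sp(n)}\!\left(E+\tfrac{u}{n},\, E+\tfrac{v}{n}\right), \qquad K_{Sp(n)}(x,y) = s_{n+1}(x-y) - s_{n-1}(x+y).
\]

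The second step is the asymptotic computation of each piece. The diagonal piece
\[
\frac{1}{n}\, s_{n+1}\!\left(\frac{u-v}{n}\right) = \frac{\sin\!\bigl(\pi(n+1)(u-v)/n\bigr)}{n\sin\!\bigl(\pi(u-v)/n\bigr)}
\]
converges to $\sin(\pi(u-v))/(\pi(u-v))$ uniformly on compact $(u,v)$ sets. The cross piece $\frac{1}{n} s_{n-1}\!\bigl(2E + (u+v)/n\bigr)$ has numerator bounded by $1$ and denominator tending to $\sin(2\pi E)\neq 0$ (this is where $E\neq 0$ and $|E|<\tfrac12$ are used), and so tends to $0$ uniformly on compact sets. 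Combining the two yields uniform convergence $K^{(E,n)} \to K_{\sin}$ on compact sets.

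For the final step, since $K_{Sp(n)}$ is the kernel of an orthogonal projection on $L^2[0,\tfrac12]$ (eigenvalues in $\{0,1\}$), the rescaled kernels satisfy the "good kernel" hypothesis of Lemma 4.2.48 of \cite{AGZ10}, which upgrades uniform kernel convergence to weak convergence of the associated determinantal point processes, giving convergence of $\tilde{x}^{(n)}$ to the Sine process. I do not expect any serious obstacle: the proof is essentially a transcription of that of Proposition \ref{proposition_SO(n)_tends_to_sine} with $K_{SO(n)}$ replaced by $K_{Sp(n)}$, the only substantive point being that the cross term $s_{n-1}(2E + \cdots)/n$ decays precisely because $2E$ stays bounded away from the integers.
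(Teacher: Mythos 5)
Your proposal is correct and follows essentially the same route as the paper, which proves the $Sp(n)$ case "in an identical fashion" to Proposition \ref{proposition_SO(n)_tends_to_sine}: exhibit the rescaled process as determinantal, check uniform convergence of the kernel to the sine kernel on compacts (with the cross term $s_{n-1}(x+y)/n$ vanishing precisely because $2E$ is bounded away from $\mathbb{Z}$), and conclude via the projection-kernel criterion of Lemma 4.2.48 of \cite{AGZ10}. Your observation that the mirror eigenangles $-\vartheta_j$ exit every compact window is the same implicit reduction the paper uses.
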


The proof of Theorem \ref{thm:ortho} then applies almost word-for-word to prove Theorem \ref{thm:symp}.

\section{An application to the Gaussian Unitary Ensemble}

Finally we can turn to a proof of Theorem \ref{thm:main}, characterizing the limit of $\Xi_n^{GUE}(s)$. As we have mentioned, this provides an alternative approach to that outlined by Sodin \cite{So} making use of a result of Aizenman-Warzel \cite{AiWa}. Our approach will be similar to the last section; a new ingredient is a `localization'  of $\Xi_n^{GUE}(s)$. We approximate this ratio of determinants by a product, involving only eigenvalues close to $E\sqrt{n}$, multiplied by a regular term coming from the semicircular law. In addition to uniform bounds on counts of eigenvalues which do not seem to appear elsewhere in the literature, this localization may be of independent interest.

\subsection{Well-known facts about the GUE and an outline of our proof}

For $M$ a $n\times n$ GUE matrix, label the $n$ eigenvalues $\{\lambda_1,...,\lambda_n\}$. Asymptotically almost surely, all eigenvalues satisfy $|\lambda_i| \leq (2+o(1))\sqrt{n}$ as $n\rightarrow\infty$ and moreover,

\begin{thm}[The semicircular law]
\label{thm:semicircular}
For any continuous and compactly supported function $f :\mathbb{R} \rightarrow\mathbb{C}$,
$$
\lim_{n\rightarrow\infty} \E\left[ \frac{1}{n} \sum_{i=1}^n f\left( \frac{\lambda_i}{\sqrt{n}} \right) \right] = \int \rho_{sc}(x) f(x) \,dx,
$$
where $\lambda_1, \dots,\lambda_n$ are the eigenvalues of an $n\times n$ GUE matrix $M$ and $\rho_{sc}$ is defined in (\ref{sem_law}).
\end{thm}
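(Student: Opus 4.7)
The plan is Wigner's classical method of moments. Since $\rho_{sc}$ is compactly supported on $[-2,2]$, it is determined by its moments $m_k := \int x^k \rho_{sc}(x)\, dx$, and a short direct calculation gives $m_{2m} = C_m$ (the $m$-th Catalan number) and $m_{2m+1}=0$. Writing
\[
\mu_n := \E \Bigl[ \tfrac{1}{n} \sum_{i=1}^n \delta_{\lambda_i/\sqrt{n}} \Bigr]
\]
for the \emph{expected} empirical spectral distribution, the theorem is equivalent to $\mu_n \to \rho_{sc}(x)\, dx$ weakly. It therefore suffices to prove that $(\mu_n)$ is tight and that all its moments converge to those of $\rho_{sc}$; the fact that $\rho_{sc}$ is determined by its moments then forces weak convergence, hence convergence of $\E\bigl[\tfrac{1}{n}\sum_i f(\lambda_i/\sqrt{n})\bigr] = \int f\, d\mu_n$ for every continuous compactly supported $f$.

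For moment convergence, I would expand
\[
\int x^k\, d\mu_n(x) = \frac{1}{n^{1+k/2}} \E[\Tr(M^k)] = \frac{1}{n^{1+k/2}} \sum_{i_1,\dots,i_k=1}^n \E[M_{i_1 i_2} M_{i_2 i_3} \cdots M_{i_k i_1}],
\]
and apply Wick's (Isserlis') theorem to each Gaussian expectation, turning it into a sum over pair partitions $\pi$ of $\{1,\dots,k\}$; in particular the sum vanishes for odd $k$. To each $\pi$ one associates a ribbon graph obtained by gluing the sides of a $k$-gon according to the pairing, and the number of multi-indices $(i_1,\dots,i_k)$ compatible with $\pi$ is $n$ raised to the number of faces. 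An Euler-characteristic computation shows that this exponent is at most $k/2+1$, with equality precisely for the \emph{non-crossing} (planar) pair partitions, which are counted by $C_{k/2}$. Hence $\int x^k\, d\mu_n \to m_k$ for every $k \geq 0$.

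Tightness of $(\mu_n)$ follows immediately from the uniform bound $\int x^2\, d\mu_n = \frac{1}{n^2}\E[\Tr(M^2)] = 1$, which is an elementary calculation on the entries of $M$. By Markov's inequality, $\mu_n([-R,R]^c) \leq R^{-2}$. Combined with the moment convergence and the moment-determinacy of $\rho_{sc}$, standard results on the Hausdorff moment problem give $\mu_n \to \rho_{sc}(x)\, dx$ weakly, which yields the theorem.

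I expect the combinatorial step to be the main obstacle: showing that non-crossing pair partitions give an $n^{k/2+1}$ contribution to the trace sum while every crossing pairing is suppressed by at least one factor of $n$. This is the heart of Wigner's argument and amounts to the genus-counting lemma sketched above; the other ingredients (Wick's theorem, tightness from the second moment, and the passage from moment convergence to weak convergence against continuous compactly supported test functions) are all soft and follow from standard principles.
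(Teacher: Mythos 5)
Your proposal is a correct, self-contained proof by Wigner's method of moments, and all the ingredients you list (Wick's theorem, the genus-counting bound showing that only non-crossing pairings contribute at order $n^{k/2+1}$, tightness from $\int x^2\,d\mu_n=1$, and moment-determinacy of the compactly supported limit) do assemble into a complete argument; the only cosmetic imprecision is the reference to the \emph{Hausdorff} moment problem, since the measures $\mu_n$ are not compactly supported --- what you actually need is that the limit $\rho_{sc}(x)\,dx$ is moment-determinate, which together with convergence of all moments and tightness gives weak convergence. The comparison with the paper is somewhat degenerate: the paper does not prove Theorem \ref{thm:semicircular} at all, but records it as a classical background fact, pointing to \cite{AGZ10} for the surrounding GUE material and observing later (before Theorem \ref{thm:LDP}) that the semicircular law can also be obtained as a corollary of the Ben Arous--Guionnet large deviation principle for the empirical spectral measure. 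Your combinatorial route is the most elementary and works for general Wigner matrices with enough moment control, whereas the LDP route the paper alludes to is specific to invariant ensembles but yields the much stronger exponential concentration that the paper actually exploits in Proposition \ref{proposition:localization1}; for the quantitative refinement at the level of the one-point function the paper instead invokes the Ercolani--McLaughlin estimate $\rho_n(x)=\rho_{sc}(x)+O(n^{-1})$, which no moment computation of the kind you sketch would deliver.
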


Because of this result, it will be convenient to deal with eigenvalues at a \emph{macroscopic} scaling also. In the macroscopic scaling, eigenvalues have order $O(1)$, range from $-2$ to $2$ and are denoted $\Lambda_i$, with
$$
\Lambda_i := \frac{\lambda_i}{\sqrt{n}}.
$$

Owing to the semicircular law, eigenvalues $\Lambda_i$ lying in the interval $(-2,2)$ are said to lie \emph{in the bulk}. Inside the bulk, we will make use of a well-known microscopic scaling. It is also known that with a microscopic scaling, the eigenvalues of $M$ tend as $n\rightarrow\infty$ to the Sine process \cite[Ch. 3]{AGZ10}, \cite{GaMe}:

\begin{thm}[Gaudin-Mehta]
\label{thm:gaudin_mehta}
For any $E\in (-2,2)$ the point process with point configurations given by
$$
\Big\{ n \rho_{sc}(E)\Big(\Lambda_i- E\Big) \Big\}
$$
tends as $n\rightarrow\infty$ in law to the Sine process.
\end{thm}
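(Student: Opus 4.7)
The strategy is to exploit the determinantal structure of the GUE eigenvalues and push the asymptotics of the Hermite kernel through to the rescaled point process. Concretely, the eigenvalue point process is determinantal on $\mathbb{R}$ with correlation kernel
$$
K_n(x,y) = \sum_{k=0}^{n-1} \varphi_k(x)\varphi_k(y),
$$
where $\varphi_k$ are the normalized Hermite functions corresponding to our chosen GUE normalization. By the Christoffel--Darboux identity this collapses to
$$
K_n(x,y) = c_n \frac{\varphi_n(x)\varphi_{n-1}(y) - \varphi_{n-1}(x)\varphi_n(y)}{x-y}
$$
for an explicit constant $c_n$. The whole proof then reduces to analysing this compact expression.

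The first step is purely book-keeping: writing down the kernel of the rescaled process. The rescaling $\lambda \mapsto u := n\rho_{sc}(E)(\lambda/\sqrt{n} - E) = \sqrt{n}\rho_{sc}(E)(\lambda - \sqrt{n}E)$ transforms the determinantal kernel by a Jacobian, yielding
$$
\widetilde K_n(u,v) := \frac{1}{\sqrt{n}\,\rho_{sc}(E)}\,K_n\!\left(\sqrt{n}E + \frac{u}{\sqrt{n}\rho_{sc}(E)},\ \sqrt{n}E + \frac{v}{\sqrt{n}\rho_{sc}(E)}\right).
$$
So what must be shown is $\widetilde K_n(u,v) \to \sin\pi(u-v)/(\pi(u-v))$ uniformly on compact sets of $\mathbb{R}^2$.

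The main step, and the real obstacle, is establishing the above kernel limit via the Plancherel--Rotach asymptotics for Hermite functions in the bulk. Writing $E = 2\sin\theta$ with $\theta\in(-\pi/2,\pi/2)$ (so that $\rho_{sc}(E) = \cos\theta/\pi$), these asymptotics give, at a point $\sqrt{n}E + \xi/\sqrt{n}$ with $\xi$ bounded,
$$
\varphi_n\!\left(\sqrt{n}E + \frac{\xi}{\sqrt{n}}\right) \approx \sqrt{\frac{2}{\pi\sqrt{n}\cos\theta}} \cos\bigl(n A(\theta) + \xi\cos\theta + B(\theta)\bigr)
$$
for explicit slowly-varying phases $A(\theta),B(\theta)$, and an analogous expansion for $\varphi_{n-1}$ differing only in a phase of size $O(1)$. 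Inserting this into Christoffel--Darboux, the product $\varphi_n\varphi_{n-1} - \varphi_{n-1}\varphi_n$ reduces by the product-to-sum identity $\cos\alpha\cos\beta' - \cos\alpha'\cos\beta = -\sin\frac{\alpha+\beta'-\alpha'-\beta}{2}\sin\frac{\alpha-\beta'-\alpha'+\beta}{2}\cdot(\textrm{combinations})$ to, essentially, $\sin\bigl((u-v)\cos\theta\cdot(\rho_{sc}(E))^{-1}\bigr) = \sin\pi(u-v)$ in the scaled variables; matching up the prefactors and the $(x-y)^{-1}$ factor produces exactly $\sin\pi(u-v)/(\pi(u-v))$. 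The delicate part is tracking the subleading terms in the phases (a naive Plancherel--Rotach statement controls the leading behaviour but one needs uniform control on compact sets of $(u,v)$), which is classical but technically non-trivial.

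The final step is a soft upgrade: convergence of the kernel on compact sets implies convergence of all correlation functions $\det[\widetilde K_n(u_i,u_j)]_{i,j\le k}$ to those of the Sine process. Because both $K_n$ and the Sine kernel arise from orthogonal projections, one has the uniform bounds needed to invoke Lemma 4.2.48 of \cite{AGZ10} (the same reference the paper cites in Proposition \ref{proposition_SO(n)_tends_to_sine}), concluding the weak convergence of the rescaled point processes to the Sine process. The expectation is that, as in the classical groups case, this last step is essentially automatic, and the whole content of the theorem lives in the bulk Hermite asymptotics.
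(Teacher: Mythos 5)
Your proof follows exactly the route the paper indicates: the paper states that Theorem~\ref{thm:gaudin_mehta} follows from the determinantal formula~\eqref{GUE(n)_determinantal} together with Plancherel--Rotach bulk asymptotics for the Hermite functions, citing~\cite[Ch.\ 3]{AGZ10} for the deduction and invoking the same ``good kernels'' Lemma~4.2.48 of~\cite{AGZ10} that appears in the proof of Proposition~\ref{proposition_SO(n)_tends_to_sine}. Your sketch fills in precisely those steps, so it is correct and matches the paper's (referenced) proof; the only caveat, which you flag yourself, is that the product-to-sum manipulation and the uniformity of the Plancherel--Rotach expansion at the microscopic scale are where the genuine technical work sits.
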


For fixed $n$, we have moreover (a result also due to Gaudin-Mehta):

\begin{equation}
\label{GUE(n)_determinantal}
\mathbb{E} \sum_{\substack{j_1, ..., j_k \\  \mathrm{distinct}}} \eta(\lambda_{j_1}, ..., \lambda_{j_k}) = \int_{\mathbb{R}^k} \eta(x_1,...,x_k) \det_{1\leq i,j \leq k} \big[K_{GUE(n)}(x_i,x_j)\big]\, d^k x,
\end{equation}
with
\begin{equation}
\label{GUE(n)_determinantal2}
K_{GUE(n)}(x,y):= \sum_{k=0}^{n-1} \psi_n(x)  \psi_n(y) = \sqrt{n}\frac{\psi_n(x)\psi_{n-1}(y) - \psi_{n-1}(x)\psi_n(y)}{x-y},
\end{equation}
for $\psi_n(x)$ defined by
$$
\psi_n(x):= \frac{H_n(x)}{(2\pi)^{1/4} \sqrt{n!}} e^{-x^2/4},
$$
with $H_n(x)$ the `probabilists' Hermite polynomial of degree $n$, defined by
$$
H_n(x) := (-1)^n e^{x^2/2} \frac{d^n}{dx^n} e^{-x^2/2}
$$
and satisfying
$$
\int_\mathbb{R} \psi_i(x) \psi_j(x) \, dx = \delta_{ij}.
$$
(Hence $H_n$ are the orthogonal polynomials with leading coefficient $1$ for the measure $e^{-x^2/2}/\sqrt{2\pi}\, dx$.)

Theorem \ref{thm:gaudin_mehta} follows from \eqref{GUE(n)_determinantal} from an analysis of Placherel-Rotach estimates for Hermite polynomials (see \cite[Ch. 3]{AGZ10} for this deduction, along with the rest of the identities cited above). We will find it necessary to recall a few estimates of this sort later.

As in Theorems \ref{thm:ortho} and \ref{thm:symp}, it will be important to write $\Xi_n^{GUE}(s)$ in a form to which theorem \ref{general_pp_theorem} can be applied.

\begin{lemma}
\label{lemma:Xi_product}
$$ \Xi_n^{GUE}(s)
 = \prod_{i=1}^n \Big( 1 - \frac{s}
                                 {n \rho_{sc}(E)\left( \Lambda_i - E \right)} 
                 \Big)\\
$$
\end{lemma}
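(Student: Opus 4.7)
The plan is to carry out a direct computation, since the claim is essentially an algebraic identity: both determinants factor over eigenvalues of $M$, and the relation $\Lambda_i = \lambda_i/\sqrt{n}$ converts the microscopic shift by $s/(\rho_{sc}(E)\sqrt{n})$ into the $1/n$-scale shift that appears on the right.

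First, I would diagonalize $M$. Since $M$ is Hermitian with eigenvalues $\lambda_1,\dots,\lambda_n$, for any scalar $\alpha\in\C$ we have $\det(M+\alpha I) = \prod_{i=1}^n (\lambda_i+\alpha)$. Applying this twice with $\alpha = -E\sqrt{n}$ (for the denominator) and with $\alpha = -s/(\rho_{sc}(E)\sqrt{n}) - E\sqrt{n}$ (for the numerator), the definition \eqref{eq:def_Xi_n} of $\Xi_n^{GUE}(s)$ becomes
\begin{equation*}
\Xi_n^{GUE}(s) \;=\; \prod_{i=1}^n \frac{\lambda_i - E\sqrt{n} - s/(\rho_{sc}(E)\sqrt{n})}{\lambda_i - E\sqrt{n}} \;=\; \prod_{i=1}^n \left( 1 - \frac{s/(\rho_{sc}(E)\sqrt{n})}{\lambda_i - E\sqrt{n}} \right).
\end{equation*}

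Second, I would substitute $\lambda_i = \sqrt{n}\,\Lambda_i$ to get $\lambda_i - E\sqrt{n} = \sqrt{n}(\Lambda_i - E)$, so that the fraction inside the product simplifies to
\begin{equation*}
\frac{s/(\rho_{sc}(E)\sqrt{n})}{\sqrt{n}(\Lambda_i - E)} \;=\; \frac{s}{n\,\rho_{sc}(E)\,(\Lambda_i - E)},
\end{equation*}
which immediately yields the claimed product formula.

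The only thing that could cause trouble is the division by $\det(-E\sqrt{n}+M)$: this is well-defined precisely when no $\lambda_i$ equals $E\sqrt{n}$, equivalently no $\Lambda_i$ equals $E$, which holds almost surely (the joint eigenvalue law of the GUE is absolutely continuous, so the event $\{E\sqrt{n} \in \mathrm{Spec}(M)\}$ has probability zero). There is no real obstacle here; the lemma is essentially a change of variables recorded for later use in rewriting $\Xi_n^{GUE}(s)$ in the product form to which Theorem \ref{general_pp_theorem} can be applied.
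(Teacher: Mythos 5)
Your proof is correct and is essentially the same computation as the paper's: both factor the ratio of determinants over the eigenvalues and rescale $\lambda_i = \sqrt{n}\,\Lambda_i$ (the paper rescales inside the determinant first and then factors, which is an immaterial reordering). The added remark on almost-sure well-definedness is fine but not needed for the identity itself.
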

\begin{proof}
\begin{align*}
   \Xi_n^{GUE}(s)
& = \frac{\det\Big(E-\frac{M}{\sqrt{n}}+\frac{s}{n \rho_{sc}(E)}\Big)}{\det(E-\frac{M}{\sqrt{n}})}\\
& = \prod_{i=1}^n \left( \frac{E - \Lambda_i + \frac{s}{n \rho_{sc}(E)} }
                              {E - \Lambda_i } 
                  \right)\\
& = \prod_{i=1}^n \left( 1 - \frac{s}
                                  {n \rho_{sc}(E)\left( \Lambda_i - E \right)} 
                  \right)
\end{align*}
\end{proof}

We can now briefly outline our approach for Theorem \ref{thm:main}. In the next section we develop estimates akin to Propositions \ref{proposition:SO(n)_EV_bounds} and \ref{proposition:Sp(n)_EV_bounds} to estimate the expectation and variance of counts of eigenvalues lying in an interval located in the bulk. In section \ref{section:localization} following it, we apply this along with a large deviation estimate of Ben Arous and Guionnet to approximate the product in Lemma \ref{lemma:Xi_product} with a truncated product involving only eigenvalues $\Lambda_i$ very close to $E$, multiplied by a deterministic constant arising from the semicircular law. The product of eigenvalues, localized around $E$ may then be shown to converge to $\xi_\infty(s)$ by applying Theorem \ref{general_pp_theorem}.

\subsection{Eigenvalue counts for the GUE}

In this section we prove an analogue of Propositions \ref{proposition:SO(n)_EV_bounds} and \ref{proposition:Sp(n)_EV_bounds} for eigenvalues of GUE matrices. It will be most convenient in this setting to count points already at a microscopic scaling (as opposed to the macroscopic scaling we used in the statement of Propositions \ref{proposition:SO(n)_EV_bounds} and \ref{proposition:Sp(n)_EV_bounds} for $SO(n)$ and $Sp(n)$). Fix $E \in (-2,2)$ and $\delta > 0$ such that $[E-\delta, E + \delta] \subset (-2,2).$ For any interval $I = [a,b]$ define the count
\begin{align*}
X_I:=& \# \{ \Lambda_i: n \rho_{sc}(E)(\Lambda_i - E) \in I\} \\
=& \# \{ \lambda_i \in \sqrt{n}E + \frac{I}{\rho_{sc}(E)\sqrt{n}}\}.
\end{align*}

As before, we establish estimates on the expectation and variance of these counts uniformly in $I$ for all $I \subset [-\delta n , \delta n]$ (so that we will consider only $\Lambda_i \in [E-\frac{\delta}{\rho_{sc}(E)}, E+\frac{\delta}{\rho_{sc}(E)}]$ in our counts -- we will gain control of $\Lambda_i$ outside of this set later). A main tool is the following estimate,

\begin{lemma}
\label{K_GUE_bound}
For fixed $E \subset (-2,2)$ and $\delta > 0$ such that $[E - \delta, E + \delta] \subset (-2,2)$,
$$
K_{GUE(n)}(x,y) \lesssim n^{\frac{1}{2}} \wedge |x-y|^{-1},
$$
uniformly for $x,y \in [(E-\delta)\sqrt{n}\,, \,(E+\delta)\sqrt{n}]$ and all $n$. (The implied constant depends only upon $E$ and $\delta$.)
\end{lemma}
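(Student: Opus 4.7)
The plan is to establish the two bounds $|K_{GUE(n)}(x,y)| \lesssim \sqrt{n}$ and $|K_{GUE(n)}(x,y)| \lesssim |x-y|^{-1}$ separately, then take the minimum. Both rest on uniform Plancherel--Rotach estimates for the Hermite functions $\psi_n(x)$ in the bulk.

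For the first bound, I would exploit the fact that $K_{GUE(n)}$ is the kernel of an orthogonal projection on $L^2(\mathbb{R})$ (onto the span of $\psi_0, \ldots, \psi_{n-1}$). The reproducing property gives the Cauchy--Schwarz estimate
\[
|K_{GUE(n)}(x,y)|^2 \leq K_{GUE(n)}(x,x) \, K_{GUE(n)}(y,y).
\]
So it suffices to bound the one-point density $K_{GUE(n)}(x,x) = \sum_{k=0}^{n-1} \psi_k(x)^2$ uniformly in $x \in [(E-\delta)\sqrt{n},(E+\delta)\sqrt{n}]$. This is the one-point correlation function of the GUE; a classical consequence of Plancherel--Rotach (see \cite[Ch.~3]{AGZ10}) is the uniform bulk estimate
\[
K_{GUE(n)}(x,x) = \sqrt{n}\,\rho_{sc}(x/\sqrt{n})\,(1+o(1)) \lesssim \sqrt{n},
\]
the $o(1)$ being uniform on $[(E-\delta)\sqrt{n},(E+\delta)\sqrt{n}]$ since $\rho_{sc}$ is bounded and bounded away from zero on $[E-\delta,E+\delta]$.

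For the second bound, I would use the Christoffel--Darboux form appearing in \eqref{GUE(n)_determinantal2},
\[
K_{GUE(n)}(x,y) = \sqrt{n}\,\frac{\psi_n(x)\psi_{n-1}(y) - \psi_{n-1}(x)\psi_n(y)}{x-y}.
\]
Pointwise Plancherel--Rotach asymptotics (again available in \cite[Ch.~3]{AGZ10}) give the uniform bound $|\psi_n(x)|, |\psi_{n-1}(x)| \lesssim n^{-1/4}$ for $x$ in the bulk window $[(E-\delta)\sqrt{n},(E+\delta)\sqrt{n}]$. Substituting into the Christoffel--Darboux identity yields
\[
|K_{GUE(n)}(x,y)| \lesssim \frac{\sqrt{n}\cdot n^{-1/4}\cdot n^{-1/4}}{|x-y|} = \frac{1}{|x-y|},
\]
again uniformly in $n$ and in $x,y$ in the given window.

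Combining the two estimates gives the claimed $n^{1/2} \wedge |x-y|^{-1}$ bound. The one delicate point, and the only real obstacle, is citing the correct form of the uniform Plancherel--Rotach estimates in the probabilists' normalization used here (eigenvalue scale $\sqrt{n}$ rather than the physicists' $\sqrt{2n}$); once the normalization is reconciled, both ingredients are standard. The reproducing-kernel Cauchy--Schwarz step is cheap, so the entire difficulty is packaged into invoking these Hermite estimates uniformly on the chosen compact subinterval of the bulk.
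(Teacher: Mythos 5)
Your proposal is correct and follows essentially the same route as the paper: Christoffel--Darboux plus the Plancherel--Rotach bound $|\psi_n|,|\psi_{n-1}|\lesssim n^{-1/4}$ for the $|x-y|^{-1}$ estimate, and Cauchy--Schwarz for the reproducing kernel to reduce the $\sqrt{n}$ estimate to the diagonal. The only (immaterial) difference is that the paper bounds $K_{GUE(n)}(x,x)$ via the formula $\sqrt{n}\,(\psi_n'\psi_{n-1}-\psi_n\psi_{n-1}')$ and the recursion giving $|\psi_n'|\lesssim n^{1/4}$, whereas you quote the uniform convergence of the one-point density to the semicircle law, which the paper itself invokes elsewhere.
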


This lemma depends on an analysis of Plancherel-Rotach estimates, and we will prove it later. For the moment we take this estimate for granted, and show that it implies

\begin{proposition}
\label{proposition:point_count_estimates}
Fix $E \in (-2,2)$ and $\delta > 0$ such that $[E-\delta, E+\delta] \subset (-2,2)$. For an interval $I \subset [-\delta n, \delta n]$
$$
\Var\, X_I \lesssim \log(2+ |I|).
$$
Moreover for $I \subset [-n^{9/10},n^{9/10}]$
$$
\mathbb{E}\, X_I = |I| + O(|I|^{8/9}).
$$
\end{proposition}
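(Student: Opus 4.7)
The plan is to apply Lemma \ref{lemma:determinantal_exp_and_var} to reduce both estimates to integrals of the kernel $K_{GUE(n)}$ over the ``physical'' interval $J := \sqrt{n}E + I/(\rho_{sc}(E)\sqrt{n})$, which is related to $I$ by the rescaling $u = \rho_{sc}(E)\sqrt{n}(x - \sqrt{n}E)$. I would choose $\delta$ small enough at the outset so that $J$ lies well inside a bulk interval $[(E-\delta_0)\sqrt{n}, (E+\delta_0)\sqrt{n}]$ on which Lemma \ref{K_GUE_bound} applies, and note that $|J| = |I|/(\rho_{sc}(E)\sqrt{n})$.

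For the variance, Lemma \ref{lemma:determinantal_exp_and_var} gives
\[
\Var\, X_I = \int_J \int_{\mathbb{R} \setminus J} K_{GUE(n)}(x,y)^2 \, dx\, dy,
\]
and I would split $\mathbb{R}\setminus J$ into a \emph{bulk piece} $[(E-\delta_0)\sqrt{n}, (E+\delta_0)\sqrt{n}]\setminus J$ and its complement, the \emph{edge piece}. On the bulk piece, Lemma \ref{K_GUE_bound} gives $K_{GUE(n)}^2 \lesssim n\wedge |x-y|^{-2}$, which after rescaling becomes $\lesssim n/(1+(u-v)^2)$; the Jacobian $(\rho_{sc}(E)\sqrt{n})^{-2}$ cancels the factor of $n$ and one is left with $\int_I \int_{\mathbb{R}\setminus I}(1+(u-v)^2)^{-1}\, du\, dv \lesssim \log(2+|I|)$, exactly the computation carried out in the proof of Proposition \ref{proposition:SO(2n)_EV_bounds1}. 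On the edge piece, where $|y - \sqrt{n}E| \gtrsim \sqrt{n}$, the Christoffel--Darboux form of the kernel in \eqref{GUE(n)_determinantal2} together with $|x-y| \gtrsim \sqrt{n}$ and the standard bulk bound $\psi_k(x)^2 \lesssim n^{-1/2}$ for $x\in J$ and $k\in\{n-1,n\}$ yields
\[
K_{GUE(n)}(x,y)^2 \lesssim \frac{\psi_n(y)^2 + \psi_{n-1}(y)^2}{\sqrt{n}};
\]
integrating this over $y\in\mathbb{R}$ using $\int\psi_k^2 = 1$ and over $x\in J$ using $|J| = |I|/(\rho_{sc}(E)\sqrt{n})$ produces a contribution of order $|I|/n = O(1)$.

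For the expectation, Lemma \ref{lemma:determinantal_exp_and_var} gives $\mathbb{E}\, X_I = \int_J K_{GUE(n)}(x,x)\,dx$. I would use the bulk Plancherel--Rotach asymptotic $K_{GUE(n)}(x,x) = \sqrt{n}\,\rho_{sc}(x/\sqrt{n}) + O(1)$, whose pointwise $O(1)$ error integrates to $O(|J|) = O(|I|/\sqrt{n})$. The leading term, under the change of variables $x\mapsto u$, becomes $\int_I \rho_{sc}(E + u/(\rho_{sc}(E)n))/\rho_{sc}(E)\, du$; Taylor-expanding $\rho_{sc}$ to second order around $E$ (where it is smooth) produces the main term $|I|$ plus remainders that, using the explicit bounds $\bigl|\int_I u\, du\bigr| \lesssim n^{9/10}|I|$ and $\int_I u^2\, du \lesssim n^{9/5}|I|$ that follow from $I\subset[-n^{9/10},n^{9/10}]$, are controlled by $O(|I|/n^{1/10})$. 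The inequality $|I|/n^{1/10} \leq |I|^{8/9}$ is equivalent to $|I|\leq n^{9/10}$, so this yields precisely the claimed error.

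The delicate step is the edge-piece estimate for the variance: Lemma \ref{K_GUE_bound} is stated only for arguments in the bulk, so the region of integration where $y$ is at a macroscopic distance from $\sqrt{n}E$ must be handled separately by invoking Christoffel--Darboux together with the $L^\infty$ bulk bound and the $L^2$-normalization of the Hermite functions $\psi_k$. This step has no analogue in the classical compact-group argument of Proposition \ref{proposition:SO(2n)_EV_bounds1}, where all eigenvalues are automatically confined to a compact interval.
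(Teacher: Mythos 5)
Your proposal is correct and follows essentially the same route as the paper: Lemma \ref{lemma:determinantal_exp_and_var} to express both quantities as integrals of $K_{GUE(n)}$, the same bulk/edge split of $\mathbb{R}\setminus J$ for the variance (with Lemma \ref{K_GUE_bound} on the bulk piece and the Christoffel--Darboux form plus $|x-y|\gtrsim\sqrt n$ and $L^2$-normalization of $\psi_k$ on the edge piece), and a Plancherel--Rotach estimate for $K_{GUE(n)}(x,x)$ plus a Taylor expansion of $\rho_{sc}$ around $E$ for the expectation. The only deviations are cosmetic: the paper dispatches the edge contribution by integrating $\psi_n^2(x)+\psi_{n-1}^2(x)$ over $x\in J$ via $L^2$-normalization in the $x$-variable rather than invoking, as you do, the bulk $L^\infty$ bound $\psi_k(x)^2\lesssim n^{-1/2}$, and for the expectation it cites the Ercolani--McLaughlin estimate $\rho_n=\rho_{sc}+O(n^{-1})$ rather than your weaker $O(1)$ error term, though both variants suffice for the stated bounds.
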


\begin{proof}
We treat expectation first. Denoting $\rho_n(x):= K_{GUE(n)}(x \sqrt{n},x \sqrt{n})/\sqrt{n}$ the density
of the average empirical measure of the GUE at the macroscopic scale, we have
$$ \E\, X_I = n \int_{E + \frac{I}{n \rho_{sc}(E)} } \rho_n(x)\, dx
                        = \int_{I} \frac{ \rho_n(E+\frac{x}{n \rho_{sc}(E)}) }
                                        { \rho_{sc}(E) } dx.$$
Then the result is implied by the following (weaker form of) the uniform estimate found  in \cite{ErcolaniMcLaughlin03}, equation (4.2):
$$ \rho_n(x) = \rho_{sc}(x) + O( n^{-1} ).$$
For we have then
$$
\E\, X_I = \int_{I} \frac{ \rho_{sc}(E+\frac{x}{n \rho_{sc}(E)}) }{ \rho_{sc}(E) } dx + O\Big(\frac{|I|}{n}\Big) = |I| + O\Big(\frac{|I|^2}{n}+\frac{|I|}{n}\Big)
$$
using that $\rho_{sc}(E+\frac{x}{n \rho_{sc}(E)}) = \rho_{sc}(E) + O\left(\frac{x}{n\rho_{sc}(E)}\right)$ to estimate the integral. Because $|I| = O(n^{9/10})$ the claimed estimate follows.

For the variance, we again make use of Lemma \ref{lemma:determinantal_exp_and_var} to see that
$$
\Var\left( X_I \right) = \int_{x \in J} \int_{y \notin J} K_{GUE(n)}\left( x,y\right)^2 dy dx \ ,
$$
where we have abbreviated $J:=\sqrt{n}E + \frac{I}{\rho_{sc}(E) \sqrt{n}}.$ Let $W:= [(E-\delta)\sqrt{n}\,, \,(E+\delta)\sqrt{n}]$, and note
\begin{align}
\label{y_int_GUE_bound}
\int_{y \notin J} K_{GUE(n)}(x,y)^2\, dy & = \int_{y \in W \setminus J} K_{GUE(n)}(x,y)^2\, dy + \int_{y \notin W} K_{GUE(n)}(x,y)^2\, dy \\
\notag &\lesssim \int_{y \in W \setminus J} \frac{n}{1+ n(x-y)^2}\, dy + \int_{y \notin W} \psi_n^2(x) \psi_{n-1}^2(y) + \psi_n^2(y) \psi_{n-1}^2(x)\, dy
\end{align}
with the bound for the integral over $y \in W\setminus J$ following from Lemma \ref{K_GUE_bound}, and the bound for the integral over $y \notin W$ following from \eqref{GUE(n)_determinantal2} and the fact that for $y \notin W$ and $x \in J$, we have $|x-y| \gtrsim \sqrt{n}$. From the fact that $\psi_i$ are orthonormal,
$$
\int_{y \notin W} \psi_n^2(x) \psi_{n-1}^2(y) + \psi_n^2(y) \psi_{n-1}^2(x)\, dy \leq \psi_n^2(x) + \psi_{n-1}^2(x).
$$
But then integrating \eqref{y_int_GUE_bound} over $x \in J$,
\begin{align*}
\Var (X_I) &\lesssim \int_{x \in J} \int_{y \in W \setminus J} \frac{n}{1+ n(x-y)^2}\, dy dx  + \int_{x \in J} \psi_n^2(x) + \psi_{n-1}^2(x)\, dx\\
& \lesssim \int_{x \in J} \int_{y \in \mathbb{R} \setminus J} \frac{n}{1+ n(x-y)^2}\, dy dx + O(1).
\end{align*}
The integral here is of the same form we bounded in \eqref{s_n_var_bound}, and the same computation gives us the bound
$$
\Var (X_I) \lesssim \log(2 + \sqrt{n}|J|) \lesssim \log(2+|I|),
$$
as claimed.
\end{proof}

It remains to prove Lemma \ref{K_GUE_bound}.

\begin{proof}[Proof of Lemma \ref{K_GUE_bound}]
We recall that 
$$K_{GUE(n)}(x,y) = \sqrt{n} \frac{\psi_n(x)\psi_{n-1}(y) - 
\psi_n(y)\psi_{n-1}(x)}{x-y},$$
for $x \neq y$, and  
$$K_{GUE(n)}(x,x) = \sqrt{n} (\psi_n'(x) \psi_{n-1}(x) - \psi_n(x) \psi_{n-1}'(x) ).$$
where $(\psi_n)_{n \geq 1}$ are the Hermite functions.
For $x, y \in [(E-\delta) \sqrt{n}, 
(E+\delta) \sqrt{n}]$ (in the bulk)
we have $\psi_n$ and $\psi_{n-1}$ dominated by $n^{-1/4}$, for example by  Plancherel-Rotach estimates. Hence, 
$$K_{GUE(n)}(x,y) = O(|x-y|^{-1})$$
on the bulk. 
On the other hand, we have the relation: 
$$\psi'_n(x) = \frac{\sqrt{n}\psi_{n-1}(x) - \sqrt{n+1}\psi_{n+1}(x)}{2},$$
from classical recursion relations satisfied by the Hermite polynomials (see for example, 
\cite{AGZ10}, Lemma 3.2.7),
which implies that $\psi'_n$ is dominated by $n^{1/4}$ in the bulk. We deduce that 
$ K_{GUE(n)}(x,x)$ is dominated by $n^{\frac{1}{2}}$ in the bulk, and 
 by positivity of the kernel: 
$$|K_{GUE(n)}(x,y)| \leq [K_{GUE(n)}(x,x) K_{GUE(n)}(y,y)]^{\frac{1}{2}} = O (n^{\frac{1}{2}}).$$
\end{proof}

\subsection{Localization in the bulk}
\label{section:localization}

In this section we show that $\Xi_n^{GUE}(s)$ can be approximated in probability by a truncated product involving only those eigenvalues $\Lambda_i$ lying very close to $E$, multiplied by a deterministic constant coming from the semicircular law. 

We first localize the product giving us $\Xi_n^{GUE}(s)$ at the macroscopic level. Since the pioneering work of Ben Arous and Guionnet \cite{BenGui97}, it is understood that the semicircular law \ref{thm:semicircular} can be obtained as a corollary of a large deviation principle for the empirical spectral measure:
\begin{align}
\label{eq:def_esd}
\mu_n := & \frac{1}{n} \sum_{i=1}^n \delta_{\Lambda_i }.
\end{align}
See Hiai and Petz \cite{HiPe00} for a comprehensive survey.

\begin{thm}[LDP for empirical measure]
\label{thm:LDP}
On the Polish space of probability measures $\mathcal{M}_1 \left( \R \right)$ endowed with the L\'evy-Prokhorov topology, the empirical spectral measure $\mu_n$ satisfies a large deviation principle with speed $n^2$ and good rate function:
$$ I(\mu) := \iint \mu(dx) \mu(dy) \left( \half\left( x^2 + y^2 \right) - \log|x-y| \right)$$
\end{thm}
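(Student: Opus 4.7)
The plan is to work directly with the explicit joint density of the GUE eigenvalues at the macroscopic scale. After the change of variables $\Lambda_i = \lambda_i/\sqrt{n}$, the joint density of $(\Lambda_1,\ldots,\Lambda_n)$ is
$$
p_n(\Lambda_1,\ldots,\Lambda_n) = \frac{1}{Z_n} \prod_{i<j} |\Lambda_i - \Lambda_j|^2 \exp\Big(-\frac{n}{2}\sum_{i=1}^n \Lambda_i^2\Big) = \frac{1}{Z_n} \exp\big(-n^2 H_n(\Lambda_1,\ldots,\Lambda_n)\big),
$$
where $H_n$ is chosen so that when we exclude the diagonal of the double sum, $H_n(\Lambda_1,\ldots,\Lambda_n)$ equals $\iint_{x\neq y} \mu_n(dx)\mu_n(dy)\big(\tfrac{1}{2}(x^2+y^2)/2 - \log|x-y|\big) + \tfrac{1}{2n}\sum_i \Lambda_i^2$. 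Modulo $O(n^{-1}\log n)$ corrections, this is essentially $I(\mu_n)$ evaluated off-diagonal.

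First I would establish exponential tightness: using the Gaussian factor $\exp(-n\Lambda_i^2/2)$, a union bound gives $\mathbb{P}(\max_i |\Lambda_i| \geq R) \leq \exp(-c n^2 R^2)$ for large $R$, so attention can be restricted to measures supported in a large compact interval. Next I would verify that $I$ is a good rate function: lower semicontinuity follows from writing
$$
I(\mu) = \iint \Big( \tfrac{1}{2}(x^2+y^2) - \log|x-y|\Big)_{\wedge M} \, \mu(dx)\mu(dy)
$$
as the supremum over $M$ of continuous bounded functions of $\mu$ (after handling the logarithmic singularity by truncation), and compactness of level sets follows from the quadratic confinement.

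For the upper bound on closed sets $F$, I would fix $\epsilon>0$, replace $\log|x-y|$ by $\log(|x-y|\vee \epsilon)$ to get a bounded continuous function $I_\epsilon$, obtain $\mathbb{P}(\mu_n \in F) \leq Z_n^{-1}\exp(-n^2 \inf_{F} I_\epsilon + o(n^2))$ by Laplace-type estimates, and finally let $\epsilon \to 0$ using monotone convergence. For the lower bound on open sets $G$, given $\mu \in G$ with $I(\mu)<\infty$, I would approximate $\mu$ by a measure $\mu^{(\delta)}$ with a smooth bounded density (e.g. convolving with a small Gaussian and truncating), which still lies in $G$ for $\delta$ small and satisfies $I(\mu^{(\delta)}) \to I(\mu)$. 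Then I would place $\Lambda_i$ near the quantiles of $\mu^{(\delta)}$ and integrate over a small neighbourhood; the boundedness of the density controls the logarithmic singularity, yielding the matching lower bound. Combining both bounds applied to $G=F=\mathcal{M}_1(\mathbb{R})$ also identifies $\frac{1}{n^2}\log Z_n \to -\inf I$ (attained uniquely at the semicircle law), which supplies the normalization implicit in both estimates.

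The main obstacle is handling the logarithmic singularity of $\log|x-y|$ on the diagonal: it prevents a direct application of Varadhan's lemma and is the reason the upper bound requires a truncation-then-pass-to-limit argument, and the lower bound requires regularizing the candidate measure $\mu$ to one with a bounded density before distributing the eigenvalues. Everything else is standard large deviations bookkeeping once the density is rewritten in terms of $\mu_n$.
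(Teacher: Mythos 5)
The paper does not actually prove Theorem \ref{thm:LDP}: it is quoted from Ben Arous--Guionnet \cite{BenGui97} (see also Hiai--Petz \cite{HiPe00}) and used only qualitatively, to get $e^{-cn^2}$ concentration of $\mu_n$ around $\rho_{sc}$. Your sketch reconstructs exactly the standard proof from those references --- rewrite the joint density as $Z_n^{-1}e^{-n^2 H_n(\mu_n)}$, exponential tightness, lower semicontinuity of $I$ via truncation of the kernel, upper bound by replacing $\log|x-y|$ with $\log(|x-y|\vee\epsilon)$ and letting $\epsilon\to 0$, lower bound by regularizing $\mu$ and placing eigenvalues at quantiles, and recovering $\tfrac{1}{n^2}\log Z_n$ from the full-space bounds. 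That is the right route and the only real obstacle (the diagonal singularity of the logarithm) is correctly identified.

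Two concrete points need repair. First, the tightness estimate $\P(\max_i|\Lambda_i|\ge R)\le e^{-cn^2R^2}$ is false: a single eigenvalue escaping to $R$ only costs the Gaussian weight $e^{-nR^2/2}$ (the largest eigenvalue satisfies an LDP at speed $n$, not $n^2$), so the maximum cannot give you speed-$n^2$ control, and in any case restricting to \emph{compactly supported} measures is not what exponential tightness delivers. The correct statement is $\P\bigl(\mu_n([-R,R]^c)\ge\varepsilon\bigr)\le e^{-c\varepsilon n^2R^2}$, because at least $\varepsilon n$ eigenvalues must each pay $e^{-cnR^2}$; this yields a compact (tight) family of measures, which suffices. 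Second, the constants: with the paper's normalization the scaled density is $\propto\prod_{i<j}|\Lambda_i-\Lambda_j|^2e^{-\frac n2\sum\Lambda_i^2}$, so the Hamiltonian you derive carries the weight $\tfrac14(x^2+y^2)$ (as your own formula $\tfrac12(x^2+y^2)/2$ shows), whereas the theorem as stated has $\tfrac12(x^2+y^2)$; only the former has $\rho_{sc}$ on $[-2,2]$ as its unique minimizer, which is essential for the application in Proposition \ref{proposition:localization1}. You should either flag this discrepancy or carry the $\tfrac14$ throughout (and subtract $\inf I$ so the rate function is genuinely nonnegative); note also that the diagonal correction term is $\tfrac1{2n^2}\sum_i\Lambda_i^2=O(1/n)$, not $\tfrac1{2n}\sum_i\Lambda_i^2$, which is what makes it negligible at speed $n^2$. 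With these fixes the argument is the standard one and is sound.
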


We make use of the identity demonstrated in Lemma \ref{lemma:Xi_product} in the argument that follows.

Let $\delta > 0$ small enough so that $E + [-\delta; \delta] \subset (-2,2)$. The eigenvalues $\left( \Lambda_i^{(n)}; 1 \leq i \leq n \right)$ can be separated into two regions: $[ E - \delta; E + \delta ]$ and the complement. Based on that distinction, we have:
\begin{align*}
\Xi_n^{GUE}(s)
& = \prod_{ |\Lambda_i - E| \leq \delta }
    \left( 1 - \frac{s}{n \rho_{sc}(E)\left( \Lambda_i - E \right)} \right)
    \prod_{ |\Lambda_i - E| > \delta }
    \left( 1 - \frac{s}{n \rho_{sc}(E)\left( \Lambda_i - E \right)} \right)
\end{align*}

We will show:
\begin{proposition}
\label{proposition:localization1}
For all $\varepsilon>0$ and compact sets $K$, for all sufficiently small $\delta > 0$ (depending on $\varepsilon, E$ and $K$), there exist constants $c,C$ (depending on $\varepsilon, E, K,$ and $\delta$) such that,
$$ \P\left( \sup_{s \in K} \left| 
   \prod_{ |\Lambda_i- E| > \delta }
    \left( 1 - \frac{s}{n \rho_{sc}(E)\left( \Lambda_i- E \right)} \right)
    -
    \exp\left(  
               - \frac{s}{\rho_{sc}(E)} \int dx \frac{\rho_{sc}(x)}{x - E}
        \right)
    \right| > \varepsilon \right) \leq C e^{-c n^2} \ .$$
The integral $\int dx \frac{\rho_{sc}(x)}{x - E}$ has to be understood in the sense of principal value.
\end{proposition}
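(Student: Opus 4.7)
The plan is to take logarithms, reduce the product to a linear functional of the empirical spectral measure $\mu_n$ from \eqref{eq:def_esd}, and exploit the large deviation principle in Theorem \ref{thm:LDP}. For $s \in K$ and $|\Lambda_i - E| > \delta$, the quantity $s/(n\rho_{sc}(E)(\Lambda_i - E))$ is $O(1/n)$ uniformly, so expanding $\log(1 - z) = -z + O(|z|^2)$ and summing gives
$$
\log \prod_{|\Lambda_i - E| > \delta}\Big(1 - \tfrac{s}{n\rho_{sc}(E)(\Lambda_i - E)}\Big) = -\tfrac{s}{\rho_{sc}(E)} \int_{|x-E| > \delta} \tfrac{\mu_n(dx)}{x - E} + O\big(\tfrac{1}{n}\big),
$$
uniformly in $s \in K$ (the integral is bounded by $1/\delta$ since $|\mu_n| = 1$). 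The problem reduces to two pieces: (a) exponential concentration of the random integral at its semicircular counterpart, and (b) showing that the semicircular integral equals $\mathrm{PV}\!\int \rho_{sc}(x)/(x-E)\,dx$ up to $O(\delta)$.

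Piece (b) is purely deterministic. Writing $\rho_{sc}(x) = \rho_{sc}(E) + (\rho_{sc}(x) - \rho_{sc}(E))$, the constant contribution to $\int_{|x-E|>\delta}dx/(x-E)$ cancels by odd symmetry about $E$, while the remainder is $O(\delta)$ by Lipschitz continuity of $\rho_{sc}$ at $E \in (-2,2)$. Taking $\delta$ sufficiently small (depending on $\varepsilon$ and $K$) thus handles this piece.

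Piece (a) is the heart of the argument. The natural integrand $g(x) := \mathds{1}_{\{|x-E|>\delta\}}/(x-E)$ is bounded by $1/\delta$ but has jump discontinuities at $E \pm \delta$, so the LDP -- which pairs cleanly with weakly continuous functionals -- does not apply directly. I would introduce a Lipschitz approximation $g_\tau$ agreeing with $g$ outside $\tau$-neighborhoods of $E \pm \delta$ and linearly interpolated across each jump. For any $\eta > 0$ the set $\{\mu : |\int g_\tau\,d\mu - \int g_\tau\,d\rho_{sc}| \geq \eta\}$ is weakly closed and avoids $\rho_{sc}$, so Theorem \ref{thm:LDP} (using that $I$ is uniquely minimized at $\rho_{sc}$) yields $\P(|\int g_\tau\,d\mu_n - \int g_\tau\,d\rho_{sc}| \geq \eta) \leq C e^{-cn^2}$. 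The deterministic remainder $|\int (g - g_\tau)\,d\rho_{sc}|$ is $O(\tau/\delta)$. Finally, $|\int (g - g_\tau)\,d\mu_n| \leq (2/\delta)\mu_n(S_\tau)$ with $S_\tau := [E-\delta-\tau,E-\delta+\tau] \cup [E+\delta-\tau,E+\delta+\tau]$, and the LDP applied to $\{\mu : \mu(S_\tau) \geq 3\tau \rho_{sc}(E)\}$ -- another weakly closed set not containing $\rho_{sc}$ -- gives $\mu_n(S_\tau) = O(\tau)$ with probability $1 - Ce^{-cn^2}$. Taking first $\tau$ then $\eta$ small, and combining with (b), completes the proof.

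The main obstacle is handling the jumps of $g$ at the refined $e^{-cn^2}$ scale. The variance bound of Proposition \ref{proposition:point_count_estimates} is far too weak; instead, one must re-invoke the LDP via a closed-set argument to control the eigenvalue count in thin strips near $E \pm \delta$. Beyond this, the argument is a soft combination of the LDP, Portmanteau's theorem, and the Lipschitz regularity of $\rho_{sc}$ on compact subsets of $(-2,2)$.
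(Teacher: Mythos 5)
Your proposal follows essentially the same route as the paper: expand the logarithm of the product to reduce it to $-\tfrac{s}{\rho_{sc}(E)}\int_{|x-E|>\delta}\tfrac{\mu_n(dx)}{x-E}$ plus an $O(1/(n\delta^2))$ error, invoke the large deviation principle of Theorem \ref{thm:LDP} to concentrate this linear statistic at its semicircular value at scale $e^{-cn^2}$, and choose $\delta$ small so that the truncated semicircular integral approximates the principal value. Your piece (a) is in fact more careful than the paper, which applies the LDP to the functional $\mu\mapsto\int g\,d\mu$ without comment; your Lipschitz regularization $g_\tau$ plus thin-strip mass control is one valid way to justify this (an alternative, closer to what the paper implicitly uses, is that $g$ is bounded with discontinuity set of $\rho_{sc}$-measure zero, so $\rho_{sc}$ lies outside the closure of the bad event and the LDP upper bound for closed sets applies directly). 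One small slip: since $S_\tau$ has total length $4\tau$, we have $\rho_{sc}(S_\tau)\approx 4\tau\rho_{sc}(E)$, so the set $\{\mu:\mu(S_\tau)\geq 3\tau\rho_{sc}(E)\}$ \emph{does} contain $\rho_{sc}$ and the LDP gives no decay there; replace the threshold by, say, $5\tau\rho_{sc}(E)$ and the argument goes through unchanged.
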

\begin{proof}
We start by picking $\delta>0$ (depending on $\varepsilon$, $E$ and $K$) small enough so that uniformly in $s \in K$:
$$ \left| \frac{s}{\rho_{sc}(E)} \int dx \frac{\rho_{sc}(x)}{x - E} - 
          \frac{s}{\rho_{sc}(E)} \int_{|x|>\delta} dx \frac{\rho_{sc}(x)}{x - E} \right| \leq \frac{\varepsilon}{3} \ .$$
Then, 
we have that $|\Lambda_i - E| > \delta$ and $s \in K$ imply    
$$  \left| \log\left|1 - \frac{s}{n \rho_{sc}(E)\left( \Lambda_i - E \right)} \right|
    + \frac{s}{n \rho_{sc}(E)\left( \Lambda_i - E \right)} \right|
    \ll_{K,\delta,E} \left( \frac{1}{n \left( \Lambda_i- E \right)} \right)^2$$
for $n$ large enough depending on $K, \delta, E$. In this regime,
\begin{align*}
  & \prod_{ |\Lambda_i - E| > \delta }
    \left( 1 - \frac{s}{n \rho_{sc}(E)\left( \Lambda_i - E \right)} \right)\\
= & \exp\left(  
               - \frac{s}{n \rho_{sc}(E)} \sum_{ |\Lambda_i - E| > \delta } \frac{1}{\Lambda_i - E} + \frac{1}{n^2} O\left( \sum_{ |\Lambda_i - E| > \delta } \left( \frac{1}{\Lambda_i - E} \right)^2 \right)
        \right)\\
= & \exp\left(  
               - \frac{s}{\rho_{sc}(E)} \int_{|x-E| > \delta } \frac{\mu_n(dx)}{x - E} +
               O\left( \frac{1}{n \delta^2} \right)
        \right)
\end{align*}
Now, because of the large deviation principle (Theorem \ref{thm:LDP}), there are constants $c,C=c_{\varepsilon,E,K,\delta}, C_{\varepsilon,E,K,\delta}$ such that:
\begin{align*}
       \P\left( \sup_{s \in K} \left| \frac{s}{\rho_{sc}(E)} \int_{|x-E| > \delta } \frac{\mu_n(dx)}{x - E}
                     - \frac{s}{\rho_{sc}(E)} \int_{|x-E| > \delta } \frac{\rho_{sc}(dx)}{x - E} \right| > \frac{\varepsilon}{3} \right) \        
\leq & C e^{-c n^2} \ .
\end{align*}
Thanks to the choice of $\delta$, one can replace $\int_{|x-E| > \delta } \frac{\rho_{sc}(dx)}{x - E}$ by $\int \frac{\rho_{sc}(dx)}{x - E}$ at the cost of changing $\frac{\varepsilon}{3}$ to $\frac{2 \varepsilon}{3}$. Finally:
\begin{align*}
     &  \P\Bigg( \sup_{s\in K} \bigg| 
       \prod_{ |\Lambda_i - E| > \delta } \left( 1 - \frac{s}{n \rho_{sc}(E)\left( \Lambda_i - E \right)} \right)
        -  \exp\Big(-\frac{s}{\rho_{sc}(E)} \int \frac{\rho_{sc}(dx)}{x - E}\Big) \bigg| \\ & \qquad \qquad >  
       \exp\Big(-\frac{s}{\rho_{sc}(E)} \int \frac{\rho_{sc}(dx)}{x - E}\Big) (e^{2 \varepsilon/3
       + O\left( \frac{1}{n \delta^2} \right)} - 1) \Bigg)
\leq  C e^{-c n^2}, 
\end{align*}
by using the inequality
$$|e^\alpha - e^{\beta}| \leq e^{\beta} (e^{|\alpha - \beta|} - 1).$$
By suitably changing the value of  $\varepsilon$ (depending on $K, \delta, E$), we get the conclusion of the proposition
if $n$ is large enough, depending on $K, \varepsilon, \delta, E$. Then, small values of $n$ can be absorbed in the constant $C$.
\end{proof}

Here we can write more simply, for $E \in (-2,2)$,
$$
\int dx\, \frac{\rho_{sc}(x)}{x-E} = -\frac{E}{2},
$$
which is an exercise in calculus (see \cite[Ch. 4]{Meh}). Thus as a consequence of the above proposition,

\begin{corollary}
\label{corollary:localization2}
For any $\varepsilon > 0$ and any compact set $K$, for all sufficiently small  $\delta > 0$ (depending on $\varepsilon, E$ and $K$), we have
$$
\lim_{n\rightarrow\infty} \mathbb{P}\Big( \sup_{s\in K}\bigg|\,  \Pi_{\delta}^{-1}  \Xi_n^{GUE}(s) - e^{\half s E/\rho_{sc}(E)}  \, \bigg| \geq \varepsilon  \Big) = 0
$$
where 
$$\Pi_{\delta} := 
\prod_{|\Lambda_i - E|\leq \delta} \Big( 1 - \frac{s}{n \rho_{sc}(E)(\Lambda_i - E)}\Big).$$
\end{corollary}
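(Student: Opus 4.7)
The plan is to observe that this is essentially a direct restatement of Proposition \ref{proposition:localization1} once one substitutes the explicit value of the principal value integral, so the proof reduces to a clean bookkeeping step followed by invoking the already-established exponential tail bound.

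First, I would use Lemma \ref{lemma:Xi_product} to write
$$
\Xi_n^{GUE}(s)
 = \prod_{|\Lambda_i - E|\leq \delta} \Big( 1 - \frac{s}{n \rho_{sc}(E)(\Lambda_i - E)}\Big)
 \cdot
 \prod_{|\Lambda_i - E|> \delta} \Big( 1 - \frac{s}{n \rho_{sc}(E)(\Lambda_i - E)}\Big),
$$
so that, as rational functions of $s$,
$$
\Pi_\delta^{-1}\, \Xi_n^{GUE}(s)
 \;=\; \prod_{|\Lambda_i - E|> \delta} \Big( 1 - \frac{s}{n \rho_{sc}(E)(\Lambda_i - E)}\Big).
$$
(Any would-be cancellation between zeros of $\Pi_\delta$ and the corresponding factors of $\Xi_n^{GUE}$ is exact, so the right-hand side has no issue when $\Pi_\delta(s)=0$.)

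Next I would apply Proposition \ref{proposition:localization1}, which says exactly that, for $\delta$ small enough depending on $\varepsilon, E, K$, the right-hand side above is uniformly within $\varepsilon$ of $\exp\!\big(-(s/\rho_{sc}(E))\int dx\, \rho_{sc}(x)/(x-E)\big)$ on $K$, with exceptional probability at most $Ce^{-cn^2}$, which tends to $0$ as $n\to\infty$.

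Finally, I would record the elementary evaluation of the principal value integral:
$$
\mathrm{p.v.}\int dx\, \frac{\rho_{sc}(x)}{x - E} \;=\; -\frac{E}{2},
\qquad E \in (-2,2),
$$
(done, for example, in \cite[Ch.\ 4]{Meh}), so that the exponential factor becomes $\exp(sE/(2\rho_{sc}(E))) = e^{\half s E/\rho_{sc}(E)}$. Combining the two previous steps gives
$$
\lim_{n\to\infty} \mathbb{P}\Big(\sup_{s\in K}\big|\Pi_\delta^{-1}\Xi_n^{GUE}(s) - e^{\half s E/\rho_{sc}(E)}\big|\geq \varepsilon\Big) = 0,
$$
proving the corollary. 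There is no real obstacle here; the only thing to take care of is the bookkeeping identity $\Pi_\delta^{-1}\Xi_n^{GUE} = \prod_{|\Lambda_i - E|>\delta}(1 - s/(n\rho_{sc}(E)(\Lambda_i-E)))$ so that Proposition \ref{proposition:localization1} applies verbatim.
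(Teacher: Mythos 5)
Your proposal is correct and matches the paper's own (implicit) argument exactly: the paper derives Corollary \ref{corollary:localization2} as an immediate consequence of Proposition \ref{proposition:localization1} together with the evaluation $\mathrm{p.v.}\int dx\,\rho_{sc}(x)/(x-E) = -E/2$, which is precisely your bookkeeping step. The sign works out as you state, since $\exp\bigl(-\tfrac{s}{\rho_{sc}(E)}\cdot(-\tfrac{E}{2})\bigr) = e^{\half s E/\rho_{sc}(E)}$.
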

We now bootstrap this estimate:

\begin{corollary}
\label{corollary:localization3}
For any $\varepsilon > 0$ and any compact set $K$, we have
$$
\lim_{n\rightarrow\infty} \mathbb{P}\Big( \sup_{s\in K}  \bigg|\, \Pi_{n^{-1/10}}^{-1}\Xi_n^{GUE}(s) - e^{\half s E/\rho_{sc}(E)}  \bigg| \geq \varepsilon  \Big) = 0,
$$
where
$$\Pi_{n^{-1/10}} :=  \prod_{|\Lambda_i - E|\leq n^{-1/10}} \Big( 1 - \frac{s}{n \rho_{sc}(E)(\Lambda_i - E)}\Big).$$
\end{corollary}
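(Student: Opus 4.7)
My plan is to bootstrap Corollary~\ref{corollary:localization2} by controlling the ``intermediate annulus'' factor
$$R_n(s) := \prod_{n^{-1/10} < |\Lambda_i - E| \leq \delta} \Big(1 - \frac{s}{n \rho_{sc}(E)(\Lambda_i - E)}\Big)$$
at a fixed small $\delta > 0$. Since $\Pi_{n^{-1/10}}^{-1} \Xi_n^{GUE} = R_n \cdot \bigl(\Pi_\delta^{-1}\Xi_n^{GUE}\bigr)$ as an identity of polynomials in $s$, and Corollary~\ref{corollary:localization2} handles the second factor, it suffices to show $R_n \to e^{-sc_\delta}$ in probability uniformly on $K$ for some deterministic $c_\delta$ with $|c_\delta| = O(\delta)$ uniformly in $n$. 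Once this is established, first choosing $\delta$ so small that $\sup_{s\in K}|e^{-sc_\delta}-1|\cdot|e^{sE/(2\rho_{sc}(E))}|$ is below the desired tolerance, then invoking Corollary~\ref{corollary:localization2} at this $\delta$, closes the proof by the triangle inequality.

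The factors of $R_n$ are uniformly close to $1$: for $|\Lambda_i - E|>n^{-1/10}$ and $s\in K$ one has $|s/(n\rho_{sc}(E)(\Lambda_i-E))| = O(n^{-9/10})$ uniformly. A Taylor expansion of $\log(1-z)$ gives
$$\log R_n(s) = -s\, S_n + \mathcal{E}_n(s), \qquad S_n := \sum_{n^{-1/10} < |\Lambda_i - E| \leq \delta} \frac{1}{n\rho_{sc}(E)(\Lambda_i - E)},$$
with $|\mathcal{E}_n(s)| \lesssim_K \sum_{|\Lambda_i - E|>n^{-1/10}} 1/(n(\Lambda_i-E))^2$. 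Passing to the microscopic variables $y_i := n\rho_{sc}(E)(\Lambda_i - E)$ and making a dyadic decomposition, the bulk bound $\mathbb{E}\, X_I \lesssim |I|$ (which follows as in the proof of Proposition~\ref{proposition:point_count_estimates} from the uniform estimate $\rho_n \lesssim 1$) gives $\mathbb{E}\sum_{|y_i|>n^{9/10}\rho_{sc}(E)} 1/y_i^2 \lesssim n^{-9/10}$, so $\mathcal{E}_n \to 0$ in probability uniformly on $K$ by Markov's inequality.

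The crux is to prove $S_n \to c_\delta$ in probability. For the expectation, the bulk estimate $\rho_n(x) = \rho_{sc}(x) + O(n^{-1})$ used in the proof of Proposition~\ref{proposition:point_count_estimates} gives
$$\mathbb{E}\, S_n = \int_{n^{-1/10} < |u| \leq \delta} \frac{\rho_{sc}(E+u)}{\rho_{sc}(E)\, u}\, du + O\!\left(\frac{\log n}{n}\right).$$
Taylor expanding $\rho_{sc}$ about $E$: the $1/u$ piece integrates to zero by symmetry of the annulus about $u=0$; the linear correction contributes $2(\delta - n^{-1/10})\, \rho_{sc}'(E)/\rho_{sc}(E)$; higher-order terms are $O(\delta^3)$. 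Hence $\mathbb{E}\, S_n \to c_\delta := 2\delta\, \rho_{sc}'(E)/\rho_{sc}(E) + O(\delta^3)$, which is $O(\delta)$. For the variance, since $K_{GUE(n)}$ is an orthogonal projection, the standard determinantal identity combined with $(g(\lambda)-g(\mu))^2 \leq 2(g(\lambda)^2+g(\mu)^2)$ and $\int K_{GUE(n)}(\lambda,\mu)^2\, d\mu = K_{GUE(n)}(\lambda,\lambda)$ yields $\mathrm{Var}\bigl(\sum_i g(\lambda_i)\bigr) \leq 2 \int g(\lambda)^2 K_{GUE(n)}(\lambda,\lambda)\, d\lambda$. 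Applied to the $g$ representing $S_n$ and using $|g| \lesssim n^{-9/10}$ on its support together with $K_{GUE(n)}(\lambda,\lambda) \lesssim n^{1/2}$ in the bulk (Lemma~\ref{K_GUE_bound}), direct integration gives $\mathrm{Var}(S_n) \lesssim n^{-9/10}$, so $S_n \to c_\delta$ in $L^2$.

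The main obstacle is the variance of $S_n$: a dyadic approach relying only on the logarithmic variance bound $\mathrm{Var}(X_I) \lesssim \log(2+|I|)$ of Proposition~\ref{proposition:point_count_estimates} does not produce a quantity that decays in $n$. Exploiting instead the orthogonal-projection structure of $K_{GUE(n)}$ together with the uniform $n^{-9/10}$ bound on the test function is what supplies the necessary decay.
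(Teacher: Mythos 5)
Your proposal is correct, and the overall skeleton matches the paper's: both proofs factor out the intermediate annulus $R_n$, note that the factors are uniformly close to $1$ for $n$ large so the logarithm can be Taylor--expanded, and reduce the problem to showing that the sum $S_n = \sum 1/(n\rho_{sc}(E)(\Lambda_i-E))$ over $n^{-1/10}<|\Lambda_i - E|\le\delta$ concentrates around a deterministic $O(\delta)$ constant; the mean calculation via $\rho_n = \rho_{sc}+O(n^{-1})$ is the same in both. The genuine divergence is in the fluctuation estimate. The paper writes $S_n$ (up to rescaling) as $\int \tilde\mu_n(dx)/(x-E)$ with $\tilde\mu_n := \mu_n - \rho_n\,dx$, integrates by parts in the Stieltjes variable, and then invokes the counting-function variance bound $\Var(X_I) \lesssim \log(2+|I|)$ of Proposition~\ref{proposition:point_count_estimates} to bound $\mathbb{E}\lvert\tilde\mu_n([E,x])\rvert = O(n^{-1}\sqrt{\log n})$ uniformly; the boundary and integral terms then come out $O(n^{-9/10}\sqrt{\log n})$ in $L^1$. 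You instead compute $\Var(S_n)$ directly from the orthogonal-projection identity $\Var(\sum g(\lambda_i)) = \tfrac12\iint (g(\lambda)-g(\mu))^2 K_{GUE(n)}(\lambda,\mu)^2\,d\lambda\,d\mu \le 2\int g^2\,K_{GUE(n)}(\lambda,\lambda)\,d\lambda$, together with $K_{GUE(n)}(\lambda,\lambda)\lesssim n^{1/2}$ from Lemma~\ref{K_GUE_bound}, obtaining $\Var(S_n)\lesssim n^{-9/10}$. Both supply the needed decay; yours is more self-contained (it bypasses the compensated measure and the partial summation, working directly with the kernel), while the paper's reuses Proposition~\ref{proposition:point_count_estimates} as the sole probabilistic input, which is arguably more modular. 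Your closing remark that a ``dyadic approach relying only on $\Var(X_I)\lesssim\log(2+|I|)$'' fails is fair as stated — one cannot control the intra-block variance of a weighted sum from count variances alone — though it is worth noting the paper's integration-by-parts (an Abel summation, not a dyadic split) circumvents precisely that obstruction using the same logarithmic bound, so the projection structure is a convenience rather than a necessity.
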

\begin{proof}
We observe that for all $B > 0$, 
 \begin{align*}
 & \mathbb{P}\Big( \sup_{s\in K}   | ( \Pi_{n^{-1/10}}^{-1} -
 \Pi_{\delta}^{-1}) \Xi_n^{GUE}(s) | \geq \varepsilon  \Big)
 =   \mathbb{P}\left( \sup_{s\in K}   \left| \frac{\Pi_{\delta}}{\Pi_{n^{-1/10}}} - 1 \right|
|\Pi_{\delta}^{-1} \Xi_n^{GUE}(s)|  \geq \varepsilon  \right)
\\ & \leq  \mathbb{P}\left( \sup_{s\in K}   \left| \frac{\Pi_{\delta}}{\Pi_{n^{-1/10}}} - 1 \right|
  \geq \varepsilon/B  \right)
  +  \mathbb{P}\left( \sup_{s\in K}  
|\Pi_{\delta}^{-1} \Xi_n^{GUE}(s)|  \geq B  \right).
 \end{align*} 
 From Corollary \ref{corollary:localization2}, the second term tends to zero when $n$ goes to infinity, 
 if $B$ is large enough depending on $K$ and $E$. By suitably changing $\varepsilon$ (depending on $K$ and $E$) and applying Corollary \ref{corollary:localization2} again, we easily deduce 
 that it is enough to  show the following: for any $E \in (-2,2)$, $\varepsilon > 0$ and compact $K$, there exists $\delta > 0$ arbitrarily small such that
\begin{equation}
\label{meso_localize}
\lim_{n\rightarrow\infty}\mathbb{P} \left[ \sup_{s\in K} \left|
\prod_{n^{-1/10} <  |\Lambda_i - E|\leq \delta } \left( 1 - \frac{s}{n \rho_{sc}(E) (\Lambda_i - E)} \right) - 1 \right| > \varepsilon 
\right] = 0.
\end{equation}
We will be able to demonstrate this if  we  show for all fixed $\delta > 0$ that
$$
\int_{n^{-1/10} < |x-E| \leq \delta} 
\frac{\mu_n(dx)}{x-E} \underset{n \rightarrow \infty}{\longrightarrow} 
\int_{-\delta}^{\delta} 
\frac{\rho_{sc} (x)}{x-E}  dx
$$
in probability, since this implies for sufficiently small $\delta$,
$$
\lim_{n\rightarrow\infty}\mathbb{P} \left[ \left| \int_{n^{-1/10} < |x-E| \leq \delta} 
\frac{\mu_n(dx)}{x-E} \right| > \varepsilon 
\right] = 0,
$$
which can be applied to control the product in \eqref{meso_localize} in the same fashion as the proof of Proposition \ref{proposition:localization1} above.

Yet since $\rho_n (x):= K_{GUE(n)} (x\sqrt{n}, x \sqrt{n}) /\sqrt{n}
= \rho_{sc}(x) + O(n^{-1})$, for any positive $\delta$
$$\int_{-\delta}^{\delta} 
\frac{\rho_{sc} (x)}{x-E}  dx - 
\int_{n^{-1/10} < |x - E| \leq \delta}
\frac{\rho_{n} (x)}{x-E}  dx$$
 tends to zero when $n$ goes to infinity
 and it is enough to show that 
 $$\int_{n^{-1/10} < |x-E| \leq \delta} 
\frac{\tilde{\mu}_n(dx)}{x-E} \underset{n \rightarrow \infty}{\longrightarrow} 
0$$
in probability, where $\tilde{\mu}_n (dx)
 = \mu_n (dx) - \rho_n(x) dx$ is the "compensated" empirical measure of the eigenvalues. 

We have, for $\delta$ small enough, $a, b \in [E-\delta, E + \delta]$, $a <b $
$$\mathbb{E} [|\tilde{\mu}_n ([a,b])|] 
\leq  \sqrt {\operatorname{Var} 
 (\mu_n([a,b]))} = O \left( n^{-1} \sqrt{\log n} \right),$$
by using Proposition \ref{proposition:point_count_estimates}.
By using integration by parts, we have
$$\int_{n^{-1/10} < x-E \leq \delta} 
\frac{\tilde{\mu}_n(dx)}{x-E}
= \left[ \frac{\tilde{\mu}_n [E,x]}{x-E}
 \right]_{(n^{-1/10}, \delta]}
 + \int_{n^{-1/10} < x-E \leq \delta}
 \frac{\tilde{\mu}_n [E,x]}{(x-E)^2} dx,
 $$
 which easily implies
 $$\mathbb{E} \left[\left|\int_{n^{-1/10} < x-E \leq \delta} 
\frac{\tilde{\mu}_n(dx)}{x-E} \right| \right] \underset{n \rightarrow \infty}{\longrightarrow} 0.$$
Similarly, 
 $$\mathbb{E} \left[\left|\int_{n^{-1/10} < - ( x-E )\leq \delta} 
\frac{\tilde{\mu}_n(dx)}{x-E} \right| \right] \underset{n \rightarrow \infty}{\longrightarrow} 0,$$
which gives the desired result. 
\end{proof}

\subsection{The limiting characteristic polynomial \texorpdfstring{for $\Xi_n^{GUE}(s)$}{for GUE} }

Theorem \ref{thm:main} is now a simple deduction based on the general Theorem \ref{general_pp_theorem}. We define point processes $x^{(n)}$ with point configurations given by
$$
\{x^{(n)}_i\} = \{ n \rho_{sc}(E)(\Lambda_i-E): |\Lambda_i-E| < n^{-1/10}\}.
$$
Note that each point configuration consists of only finitely many points, and moreover is defined in terms of only those eigenvalues that are localized around the macroscopic value $E$. Our motivation for this definition is that Corollary \ref{corollary:localization3} tells us that on compact sets $\Xi_n^{GUE}(s)$ is approximated in probability by
$$
e^{sE/2\rho_{sc}(E)} \prod_i \Big( 1- \frac{s}{x_i^{(n)}}\Big).
$$

\begin{proof}[Proof of Theorem \ref{thm:main}]
It is clear from Proposition \ref{proposition:point_count_estimates} that the sequence of processes $x^{(1)}, x^{(2)}, ...$ are uniformly product-amenable. Moreover Theorem \ref{thm:gaudin_mehta} (Gaudin-Mehta) tells us that the processes $x^{(n)}$ tend to the Sine process. Hence as $n\rightarrow\infty$
$$
\prod_i \Big( 1- \frac{s}{x_i^{(n)}}\Big) \rightarrow e^{-i \pi s} \xi_\infty(s),
$$
in law in the topology of uniform convergence on compact sets, by Theorem \ref{general_pp_theorem}. Corollary \ref{corollary:localization3} thus implies
$$
\Xi_n^{GUE}(s) \rightarrow e^{sE/2\rho_{sc}(E) - i \pi s} \xi_\infty(s),
$$
as claimed.
\end{proof}

\section{Acknowledgments}

We would like to thank Elizabeth Meckes for an informative response regarding some of the bounds proved in section \ref{classicalcompact}, Sasha Sodin likewise for a helpful discussion, and an anonymous referee for several useful comments and corrections. B.R. was partially supported during this research by the NSF grant DMS-1701577.

\bibliographystyle{halpha}
\bibliography{Bib_InvRatios}

\end{document}